    \crefname{conj}{conjecture}{conjectures}
    \crefname{conj}{Conjecture}{Conjectures}
\numberwithin{equation}{section}
\newtheorem{thm}{Theorem}[section]
\newtheorem{thmx}{Theorem}
\newtheorem{conj}[thm]{Conjecture}
\newtheorem{cor}[thm]{Corollary}
\newtheorem{lem}[thm]{Lemma}
\newtheorem{prop}[thm]{Proposition}
\newtheorem{quest}[thm]{Question}
\theoremstyle{definition}
\newtheorem{defn}[thm]{Definition}
\newtheorem*{defn*}{Definition}
\newtheorem{ex}[thm]{Example}
\newtheorem{rem}[thm]{Remark}
\newcommand{\N}{{\mathbb N}}
\newcommand{\Q}{{\mathbb Q}}
\newcommand{\Z}{{\mathbb Z}}
\newcommand{\bP}{{\mathbf P}}
\newcommand{\cF}{{\mathcal F}} 
\newcommand{\R}{{\mathcal R}}
\newcommand{\K}{{\mathcal K}}
\newcommand{\I}{{\mathcal I}}
\newcommand{\J}{{\mathcal J}}
\newcommand{\cL}{{\mathcal L}}
\newcommand{\m}{{\mathfrak m}}
\newcommand{\sym}{\mathfrak{S}}
\renewcommand{\P}{\mathrm{P}}
\renewcommand{\L}{\mathrm{\Lambda}}
\def\codim{\operatorname{ht}}
\def\depth{\operatorname{depth}}
\def\pd{\operatorname{pd}}
\def\reg{\operatorname{reg}}
\def\Ass{\operatorname{Ass}}
\def\Ss{\operatorname{Ss}}
\def\Sss{\operatorname{Sss}}
\def\bor{\operatorname{Borel}}
\def\St{\operatorname{St}}
\newcommand{\pa}{\operatorname{part}}
\def\lex{{\operatorname{antilex}}}
\def\Min{{\operatorname{Min}}}
\def\Sym{{\operatorname{Sym}}}
\def\ov{\overline}
\renewcommand{\l}{\lambda}
\def\med{\operatorname{med}}
\def\supp{\operatorname{supp}}
\def\astab{\operatorname{astab}}
\def\dstab{\operatorname{dstab}}
\def\conv{\operatorname{conv}}
\def\np{\operatorname{np}}
\title[The combinatorial structure of symmetric strongly shifted ideals]{The combinatorial structure of \\ symmetric strongly shifted ideals}
\author{Alessandra Costantini and Alexandra Seceleanu}
\address[Alessandra Costantini]{Oklahoma State University}
\email{alecost@okstate.edu}
\address[Alexandra Seceleanu]{University of Nebraska-Lincoln}
\email{aseceleanu@unl.edu}
\thanks{{\em 2020 Mathematics Subject Classification}: primary 13C70, 05E40, secondary: 13B22, 13F55, 14M25.}
\thanks{{\em Keywords}: symmetric strongly shifted ideal, polymatroidal ideal, toric ring, Rees algebra, permutohedron.}
\date{}
\begin{document}

\maketitle 
\begin{abstract}
Symmetric strongly shifted ideals are a class of monomial ideals which come
equipped with an action of the symmetric group and are analogous to the well-studied class of strongly stable monomial ideals. In this paper we focus on algebraic and combinatorial properties of symmetric strongly shifted ideals. On the algebraic side, we elucidate properties that pertain to behavior under ideal operations, primary decomposition, and the structure of their Rees algebra. On the combinatorial side, we develop a notion of partition Borel generators which leads to connections to discrete polymatroids, convex 
polytopes, and permutohedral toric varieties.
\end{abstract}

\tableofcontents

\vspace{-2 em}

\section{Introduction}

Let $R=K[x_1,\ldots, x_n]$ be a polynomial ring over a field $K$ and consider  the set of partitions 
\[
\P_n=\{\l=(\lambda_1,\dots,\lambda_n) \in \mathbb{Z}^n: 0 \leq \lambda_1 \leq \lambda_2 \leq \cdots \leq \lambda_n\}.
\]
Orbits of monomials under the natural action of the symmetric group $\sym_n$ can be identified with elements of $P_n$. This induces a bijection between $\sym_n$-fixed monomial ideals $I\subset R$ and sets of partitions $P(I)\subset P_n$ given by  \[
 \P(I)=\{ \lambda \in \P_n:x^\lambda=x_1^{\l_1}\cdots x_n^{\l_n} \in I\}.
 \]

The central object of study of this note are the following classes of $\sym_n$-fixed monomial ideals, which were introduced in \cite{BDGMNORS}.

\begin{defn}
  \label{def:shifted}
  Let $I \subset R$ be an $\sym_n$-fixed monomial ideal. 
  We say that $I$ is a \textbf{symmetric shifted ideal}, or {\bf ssi}, if, 
  for every $\lambda =(\lambda_1,\dots,\lambda_n) \in \P(I)$ and $1 \leq i <n$ with $\lambda_i < \lambda_n$, one has $x^\lambda (x_i/x_n) \in I$. 
   We say that $I$ is a \textbf{symmetric strongly shifted ideal}, or {\bf sssi}, if, 
   for every $\lambda =(\lambda_1,\dots,\lambda_n) \in \P(I)$ and $1 \leq i < j \leq n$ with $\lambda_i < \lambda_j$, one has $x^\lambda (x_i/x_j) \in I$. 
  Monomials $x^\lambda (x_i/x_n)$ and $x^\l (x_i/x_j)$ satisfying the conditions above are referred to as being obtained from $x^\l$ by a \textbf{Borel move}.
\end{defn}

The definitions of symmetric shifted and strongly shifted ideals are inspired by the definition of stable and strongly stable ideals. These are the most important classes of monomial ideals in computational algebra since, e.g., in characteristic zero generic initial initials are strongly stable. 
Moreover, stable and strongly stable ideals have well-understood minimal graded free resolutions. These were constructed by Eliahou and Kervaire, who also gave a formula for their graded Betti numbers in terms of the data of their minimal systems of monomial generators \cite{EK}. Analogous results for symmetric shifted ideals were obtained in \cite{BDGMNORS}. 

In this article, we initiate a comprehensive study of the algebraic properties  and combinatorial structure of symmetric strongly shifted ideals. Along the way we point out numerous items which extend the  list of similarities between symmetric strongly shifted ideals and strongly stable ideals.
A key tool we develop is the notion of \emph{partition Borel generators} of a symmetric strongly shifted ideal (see \Cref{def:Borelgenerators}), which is inspired by the notion of Borel generators of a strongly stable ideal and allows us to interpret a sssi as the symmetrization of a strongly stable ideal  (we refer the reader to \Cref{section:Borelgens} for any unexplained terminology). More precisely, our first main result is the following. 

\begin{thmx}[cf.~\Cref{thm:sshifted vs sstable}]
\label{thmA}
An ideal $I$ is symmetric strongly shifted if and only if $I$ is the symmetrization of a strongly stable ideal $J$ in the following sense
\[
I=\bigcap_{\sigma\in\sym_n} \sigma(J).
\]
Moreover, under this symmetrization process, the partition Borel generators of $I$ correspond to the Borel generators of $J$.
\end{thmx}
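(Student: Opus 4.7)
My plan is to establish the biconditional by two implications and then read off the identification of generators from the construction.

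For the implication $(\Leftarrow)$, assume $J \subset R$ is strongly stable and set $I := \bigcap_{\sigma \in \sym_n} \sigma(J)$. The ideal $I$ is $\sym_n$-fixed by construction, so the work is in verifying the Borel condition of \Cref{def:shifted}. Fix $\l \in \P(I)$ and $1 \le i < j \le n$ with $\l_i < \l_j$; for each $\sigma \in \sym_n$ I need to show $\sigma^{-1}(x^\l \cdot x_i/x_j) = \sigma^{-1}(x^\l) \cdot x_{i'}/x_{j'} \in J$, where $i' := \sigma^{-1}(i)$ and $j' := \sigma^{-1}(j)$; note the $x_{i'}$- and $x_{j'}$-exponents of the monomial $\sigma^{-1}(x^\l) \in J$ are $\l_i$ and $\l_j$, respectively. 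If $i' < j'$, this is a single legal downward Borel move and strong stability of $J$ finishes the case. If $i' > j'$, I would invoke the hypothesis at the permutation $\sigma' := \sigma \circ (i'\, j')$ to get $\sigma'^{-1}(x^\l) \in J$, a monomial carrying $\l_j$ at $x_{i'}$ and $\l_i$ at $x_{j'}$; iterating the legal downward Borel move $x_{i'} \to x_{j'}$ exactly $\l_j - \l_i - 1$ times then lands precisely on the target. This ``swap and unwind'' is the nontrivial step in this direction.

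For the implication $(\Rightarrow)$, given an sssi $I$ with partition Borel generators $B$, I would define $J$ to be the smallest strongly stable ideal containing $\{x^\l : \l \in B\}$, so $J$ is strongly stable by construction. I establish $I = \bigcap_\sigma \sigma(J)$ via two inclusions. Using the $\sym_n$-invariance of $I$, the inclusion $I \subseteq \bigcap_\sigma \sigma(J)$ reduces to $I \subseteq J$: starting from $\l \in B$, every $\mu \in \P(I)$ is reached by partition Borel moves and ideal multiplication, operations that preserve membership in the strongly stable ideal $J$ (via the partial-sum characterization of the Borel closure); and the $\sym_n$-orbit of a sorted $x^\mu \in J$ consists entirely of Borel-upward elements of $J$, so it too lies in $J$. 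For the converse inclusion, strong stability of $J$ ensures that a monomial $m$ lies in $\bigcap_\sigma \sigma(J)$ iff its entire $\sym_n$-orbit lies in $J$, iff the Borel-minimal sorted representative $x^\mu$ of that orbit does. Thus the remaining task is to show that whenever $\mu \in \P_n$ satisfies $x^\mu \in J$, one has $\mu \in \P(I)$.

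This last step is the combinatorial heart of the argument. Using the standard characterization of strongly stable closures, the condition $x^\mu \in J$ is equivalent to the partial sums $\mu_1 + \cdots + \mu_k$ dominating those of some $\l \in B$ for all $k$. For sorted $\mu$ and $\l$, a majorization argument (proceeding by induction on $|\mu| - |\l|$ and on the first index at which $\mu$ and $\l$ differ) realizes $\mu$ as the outcome of a finite sequence of partition Borel moves starting from $\l$, followed by a monomial multiplication to adjust total degree; this places $\mu$ in $\P(I)$. The main obstacle is precisely this combinatorial translation from Borel reachability in $R$ to partition Borel reachability within $\P_n$. The ``moreover'' claim is then a bookkeeping consequence: $J$ is built directly from $\{x^\l : \l \in B\}$, and the minimality of $B$ as a partition-Borel-generating set of $I$ transfers, via the equivalence just established between the two reachability notions on sorted monomials, to the minimality of the same monomials as Borel generators of $J$.
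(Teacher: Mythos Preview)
Your proposal is correct and reaches the same conclusion as the paper, but both directions proceed somewhat differently. For $(\Leftarrow)$, the paper does not verify the strongly shifted condition by your swap-and-unwind case analysis; instead it first establishes $P(I)=\{\l\in P_n : x^\l\in J\}$ via the single observation that $\sigma(x^\l)\prec_B x^\l$ for any partition $\l$ (so $x^\l\in J$ already forces the entire orbit into $J$), and then reads off closure of $P(I)$ under Borel moves directly from that of $J$. Your argument is valid but is effectively re-deriving this one fact in two cases. A small slip: the sorted representative of an orbit is the $\prec_B$-\emph{maximal} element, not minimal; your reduction still works precisely because strong stability is downward closure in $\prec_B$.

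For the combinatorial heart of $(\Rightarrow)$ you correctly isolate the key lemma (the content of \Cref{prop:partitions sstable}): a partition $\mu$ with $x^\mu\in\bor(x^B)$ must lie in $P(\Sss(B))$. Your plan is to find $\l'\unlhd\l$ with $\l'_i\le\mu_i$ (Borel moves first, then multiply up to $\mu$), arguing by induction on $|\mu|-|\l|$ and the first index of disagreement. The paper instead takes the dual route: it explicitly constructs a partition $\gamma$ with $\l_i\le\gamma_i$, $|\gamma|=|\mu|$, and $\mu\unlhd\gamma$ (multiply first, then Borel-move down to $\mu$), via a closed formula involving a truncation index $q=\max\{k:\sum_{i\le k}\mu_i\le|\l|\}$. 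Both strategies work; the paper's explicit $\gamma$ has the advantage of avoiding the bookkeeping your induction needs to keep intermediate vectors inside $P_n$, which is the one place your sketch would require care.
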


While symmetric strongly shifted ideals are almost never strongly stable (see \Cref{rem:neversstable}), the symmetrization process described by \Cref{thmA} sometimes allows to transfer desirable algebraic properties from the class of strongly stable ideals to the class of symmetric strongly shifted ideals. 
As an exemplification of this principle, in \Cref{prop:normal} we identify classes of normal strongly shifted ideals which are the symmetrization of normal strongly stable ideals. Moreover, from a primary decomposition of a \emph{principal Borel ideal} (i.e., a strongly stable ideal with exactly one Borel generator) we can determine a primary decomposition of a \emph{principal Borel sssi's} (i.e., a symmetric strongly shifted ideal with exactly one partition Borel generator).

\begin{thmx}[cf.~\Cref{prop:intersection symbolic}]
\label{thmB}
   Let $I$ be a principal Borel symmetric strongly shifted ideal, with partition Borel generator $\l = (\l_1, \ldots, \l_n)$. A  primary decomposition of $I$ is given by
   \[
   I = \bigcap_{j=1}^n   \bigcap_{\sigma \in \sym_n} \sigma (x_1, \ldots, x_j)^{\sum_{i=1}^{j} \l_i} .
   \]
 \end{thmx}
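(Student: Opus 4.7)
The plan is to reduce to the strongly stable setting via \Cref{thmA}, apply the classical primary decomposition of a principal Borel ideal, and then symmetrize. First, \Cref{thmA} yields $I = \bigcap_{\sigma \in \sym_n} \sigma(J)$, where $J$ is the principal strongly stable (i.e., Borel) ideal whose unique Borel generator is $x^\lambda$.

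The crux is then the primary decomposition of the principal Borel ideal $J$:
\[
J \;=\; \bigcap_{j=1}^n (x_1,\ldots,x_j)^{\lambda_1+\cdots+\lambda_j}.
\]
The inclusion $J \subseteq \bigcap_j (x_1,\ldots,x_j)^{\lambda_1+\cdots+\lambda_j}$ is straightforward: a single Borel move $x^a \mapsto x^a(x_i/x_j)$ with $i<j$ weakly increases each partial sum $a_1+\cdots+a_k$, so every monomial in the Borel orbit of $x^\lambda$---and every multiple thereof---has degree at least $\lambda_1+\cdots+\lambda_j$ in $\{x_1,\ldots,x_j\}$. The reverse inclusion amounts to the dominance-type statement that any monomial $x^\mu$ satisfying $\mu_1+\cdots+\mu_j \ge \lambda_1+\cdots+\lambda_j$ for every $j$ is divisible by some monomial in the Borel orbit of $x^\lambda$; I would prove this either by a greedy algorithm that explicitly produces such a divisor, or by induction on $|\mu|-|\lambda|$ and on the number of indices $j$ at which the inequality is strict. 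I expect this combinatorial step to be the main technical obstacle, although the fact is classical in the literature on principal Borel ideals and may simply be cited.

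Finally, since each $\sigma\in\sym_n$ is a ring automorphism that permutes variables, it commutes with intersections and ordinary powers, so
\[
\sigma(J) \;=\; \bigcap_{j=1}^n \sigma(x_1,\ldots,x_j)^{\lambda_1+\cdots+\lambda_j}.
\]
Intersecting over $\sym_n$ and swapping the order of the two intersections gives
\[
I \;=\; \bigcap_{\sigma\in\sym_n} \sigma(J) \;=\; \bigcap_{j=1}^n \bigcap_{\sigma\in\sym_n} \sigma(x_1,\ldots,x_j)^{\lambda_1+\cdots+\lambda_j}.
\]
Each factor $\sigma(x_1,\ldots,x_j)^{e} = (x_{\sigma(1)},\ldots,x_{\sigma(j)})^{e}$ is a power of a monomial prime, hence primary to that prime, so the resulting expression exhibits $I$ as an intersection of primary ideals, which is precisely the asserted primary decomposition. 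Irredundancy is not claimed in the statement and need not be addressed.
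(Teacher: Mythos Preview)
Your proposal is correct and follows essentially the same route as the paper: invoke \Cref{thmA} to write $I=\bigcap_{\sigma}\sigma(J)$ with $J=\bor(x^\lambda)$, use the classical primary decomposition $J=\bigcap_j(x_1,\ldots,x_j)^{a_j}$ of a principal Borel ideal (the paper simply cites \cite[Proposition~2.7 and Theorem~3.1]{FMS13} for this, which is the step you sketch), and then swap the intersections. Your added remark that each $\sigma(P_j)^{a_j}$ is primary, so the resulting expression is indeed a primary decomposition, is a welcome clarification.
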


The primary decomposition of \Cref{thmB} can be refined to provide an irredundant decomposition (\Cref{thm:irredundant}), which allows to determine the associated primes of ordinary powers of principal Borel sssi's (\Cref{thm:stableAss}). The decomposition can also be rearranged as an intersection of symbolic powers of square-free sssi's. The latter ideals coincide with the  \emph{square-free Veronese ideals} (see \Cref{rem:stconfig}) and are the symmetrizations of the monomial prime ideals of $R$ in the sense of \Cref{thmA}. 
In fact, they dictate the algebraic and combinatorial properties of symmetric strongly shifted ideals in a similar way as the monomial primes determine the algebraic and combinatorial properties of strongly stable ideals. 

On the algebraic side, these square-free sssi's determine the radical (\Cref{prop:radical}) and the symbolic powers (\Cref{prop:symbolic}) of every symmetric strongly shifted ideal. Moreover, every sssi with exactly one partition Borel generator can be factored as a product of square-free Veronese ideals (\Cref{prop:Borel=product}).
In turn, the latter factorization result gives rise to a combinatorial characterization of principal Borel sssi's in terms of \emph{discrete polymatroids}, introduced by Herzog and Hibi in \cite{HHibi02} as a generalization of the notion of matroid. A monomial ideal is called a \emph{polymatroidal ideal} if the exponent vectors of its monomial generators form a discrete polymatroid (see \Cref{def:polymatroidal ideal} for more details). 
Surprisingly, the class of principal Borel sssi's coincides with the class of symmetric polymatroidal ideals.

\begin{thmx}[cf.~\Cref{thm:polymatroidal}]
\label{thmC}
A monomial ideal is symmetric and polymatroidal if and only if it is a symmetric strongly shifted ideal with exactly one partition Borel generator.
\end{thmx}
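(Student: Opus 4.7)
The plan is to prove each direction of the equivalence separately, using the factorization result \Cref{prop:Borel=product} in one direction and the polymatroidal exchange axiom together with symmetry in the other.

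For $(\Leftarrow)$, suppose $I$ is an sssi with exactly one partition Borel generator. By \Cref{prop:Borel=product}, $I$ is a product of square-free Veronese ideals. Square-free Veronese ideals are polymatroidal (they correspond to uniform matroids), and products of polymatroidal ideals are polymatroidal by a well-known result of Conca--Herzog, so $I$ is polymatroidal. Symmetry is automatic since every sssi is $\sym_n$-fixed by definition.

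For $(\Rightarrow)$, suppose $I$ is symmetric and polymatroidal. I first show $I$ is an sssi. Given $\lambda\in\P(I)$ and indices $i<j$ with $\lambda_i<\lambda_j$, set $u=x^\lambda$ and $v=\sigma(x^\lambda)$, where $\sigma$ is the transposition $(i\ j)$. Both are minimal generators of $I$ (the second by symmetry of $I$), and they agree outside positions $i$ and $j$. The polymatroidal exchange axiom applied at index $j$, where $\deg_{x_j}u=\lambda_j>\lambda_i=\deg_{x_j}v$, forces the exchange index to be the unique $k$ for which $u$ has smaller exponent than $v$, namely $k=i$. Thus $x_i\,x^\lambda/x_j\in I$, the required Borel move.

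To show $I$ has a unique partition Borel generator, observe that the $\sym_n$-invariant submodular rank function of the underlying discrete polymatroid has the form $f(S)=g(|S|)$ with $g$ concave. The greedy algorithm produces a base of the polymatroid whose sorted non-decreasing form is the partition
\[
\lambda=\bigl(g(n)-g(n-1),\,g(n-1)-g(n-2),\,\ldots,\,g(1)\bigr)\in\P(I),
\]
and for any $\mu\in\P(I)$ the polymatroid inequality $\mu_{n-k+1}+\cdots+\mu_n\leq g(k)$ is saturated at $\lambda$, giving $\mu\leq_{\mathrm{dom}}\lambda$. Since the dominance order on partitions of a fixed size is generated by single Borel moves (each such move weakly decreases every top partial sum), every $\mu\in\P(I)$ is reachable from $\lambda$ via Borel moves, making $\lambda$ the unique partition Borel generator. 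The main obstacle I anticipate is one of conventions: the paper writes partitions in non-decreasing order, whereas the standard statements on dominance, majorization, and the greedy algorithm are phrased for non-increasing order, so matching polymatroid inequalities with top partial sums of partitions requires careful bookkeeping rather than new ideas.
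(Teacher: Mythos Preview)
Your proof is correct, and the forward direction together with the first half of the backward direction (symmetric polymatroidal $\Rightarrow$ sssi) match the paper's argument essentially verbatim.

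Where you diverge is in establishing uniqueness of the partition Borel generator. The paper argues by contradiction: assuming two distinct elements $\lambda,\mu\in B(I)$, it applies the exchange axiom directly to the monomials $x^\lambda$ and $x^\mu$ at the first index where they differ, producing a partition $\mu'\in\Lambda(I)$ strictly dominating $\mu$, which contradicts maximality of $\mu$ in $B(I)$. Your route is instead structural: you invoke the rank function of the polymatroid, use $\sym_n$-invariance to reduce it to a concave function $g$ of cardinality, and then read off the dominance-maximal base from the greedy algorithm. Both arguments are short; the paper's stays entirely within the exchange axiom and never names the rank function, while yours imports more polymatroid machinery but yields the explicit formula $\lambda_i=g(n-i+1)-g(n-i)$ for the Borel generator as a byproduct.

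One small slip to fix: in the uniqueness paragraph you write ``for any $\mu\in\P(I)$,'' but the polymatroid inequality $\mu_{n-k+1}+\cdots+\mu_n\le g(k)$ applies only to bases, i.e., to $\mu\in\Lambda(I)$ (partitions of the generating degree), not to arbitrary $\mu\in\P(I)$. Since $B(I)\subseteq\Lambda(I)$, restricting to $\Lambda(I)$ is exactly what is needed, so this is purely notational.
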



The factorization property of a principal Borel sssi given by \Cref{prop:Borel=product} allows for a beautiful description of their \emph{toric ideal}, whose definition we recall in \Cref{section:Rees}. In more detail, our main result is the following.
\begin{thmx}[cf.~\Cref{prop:normalRees} and \Cref{thm:quadraticFR}]
\label{thmD}
The toric ideal of a symmetric strongly shifted ideal with exactly one partition Borel generator is generated by quadratic polynomials, namely the symmetric exchange relations.  Moreover, the toric ring is a Cohen-Macaulay normal domain which has rational singularities in characteristic zero and is $F$-rational in positive characteristic.
\end{thmx}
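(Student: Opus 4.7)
The plan is to leverage \Cref{thm:polymatroidal} (Theorem~C), which identifies principal Borel symmetric strongly shifted ideals with the symmetric members of the well-studied class of polymatroidal ideals. Most of the ring-theoretic assertions will then reduce to standard facts about normal affine toric varieties, while the quadratic generation claim requires a separate combinatorial argument.

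Concretely, I would first apply Theorem~C to regard $I$ as a polymatroidal ideal and invoke the normality of polymatroidal ideals, which follows from the symmetric exchange property of the underlying discrete polymatroid (a result of Herzog and Hibi from their foundational paper on discrete polymatroids). The toric ring is thus a normal affine semigroup ring. Cohen--Macaulayness then follows from Hochster's theorem that normal affine semigroup rings are Cohen--Macaulay; rational singularities in characteristic zero are automatic for normal affine toric varieties; and $F$-rationality in positive characteristic follows from the fact that normal affine semigroup rings are direct summands of polynomial rings and are therefore even $F$-regular, hence $F$-rational.

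The main obstacle is proving that the toric ideal is generated by the quadratic symmetric exchange relations, since the analogous assertion for arbitrary polymatroids is the content of White's conjecture and is still open. To handle the restricted class at hand, I would exploit the factorization of $I$ as a product of square-free Veronese ideals provided by \Cref{prop:Borel=product}, in conjunction with the partition Borel structure inherited from \Cref{thm:sshifted vs sstable} (Theorem~A). The strategy is to show that the exponent vectors of the minimal monomial generators of $I$ form a sortable set in the sense of Sturmfels, by sorting each pair one Veronese factor at a time along the factorization; sortability then yields a quadratic Gröbner basis of the toric ideal whose elements are the sorting relations, and a direct verification shows that these relations coincide with the symmetric exchange relations in the polymatroidal setting. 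The combinatorial core of the argument lies in this sortability check, which I expect to proceed by induction on the number of distinct parts of the partition Borel generator.
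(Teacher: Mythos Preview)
Your treatment of the ring-theoretic assertions (normality, Cohen--Macaulayness, rational and $F$-rational singularities) is essentially the paper's argument: the paper also deduces normality of the toric ring, appeals to Hochster's theorem for Cohen--Macaulayness, and uses the direct-summand description of normal semigroup rings to obtain rational singularities via Boutot and $F$-regularity (hence $F$-rationality) via Hochster--Huneke. The only difference is that the paper establishes normality of $I$ by symmetrizing the known normality of the principal Borel ideal $\bor(x^\lambda)$ (\Cref{prop:normal}) rather than by quoting Herzog--Hibi's normality for discrete polymatroids; either route works.

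For the quadratic-generation claim, however, your plan diverges from the paper in a way that leaves a genuine gap. You propose to show that $G(\Sss(\{\lambda\}))$ is \emph{sortable} in Sturmfels' sense, sorting ``one Veronese factor at a time'' along the decomposition of \Cref{prop:Borel=product}. But the paper explicitly asserts, in the discussion following \Cref{thm:quadraticFR}, that sortability ``unfortunately does not necessarily hold for an arbitrary principal Borel sssi'', and for this reason leaves Koszulness of the toric ring open as Question~\ref{q2}. If the authors are right, your approach cannot succeed; and even if they are being overly cautious, you have not actually supplied the sortability argument---you only ``expect'' an induction on the number of distinct parts---so the decisive combinatorial step is missing. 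The paper circumvents this entirely: it still uses the factorization $\Sss(\{\lambda\})=\prod_i I_{n,i}^{\lambda_i-\lambda_{i-1}}$, but then observes that each square-free Veronese factor is polymatroidal with the \emph{strong exchange property} (\Cref{prop:SEP}), and invokes a theorem of Nicklasson which says that when a polymatroidal basis factors as a product of bases each satisfying the strong exchange property, the toric ideal is generated by the symmetric exchange relations. This yields quadratic generation directly, with no Gr\"obner-basis or sortability claim required.
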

In particular, our result provides supporting evidence for a longstanding conjecture of White, Herzog and Hibi \cite{{White},{HHibi02}}, which states that, for an arbitrary polymatroidal ideal $I$, the toric ideal of $I$ is generated by the symmetric exchange relations. \Cref{thmD} shows that the conjecture holds for all \emph{symmetric} polymatroidal ideals. 

 From a geometric perspective, a principal Borel sssi defines a normal {\em toric permutohedral variety}, i.e., an algebraic variety associated with a well-studied convex polytope dubbed the {\em permutohedron} (see \Cref{prop:permutohedron} and \Cref{cor:normalpolytope}).

While the combinatorial structure of symmetric strongly shifted ideals with an arbitrary number of Borel generators remains more mysterious, the knowledge of their syzygies from \cite{BDGMNORS} offers a valuable complementary source of information. 
In particular, it allows us to determine the depths of powers of an equigenerated symmetric shifted ideal (\Cref{prop:depth shifted}) and to provide a structure theorem for the \emph{Rees algebra} $\R(I) \cong \oplus_{k \geq 1} I^k$ of an equigenerated symmetric strongly shifted ideal $I$. Namely, in \Cref{thm:fibertype} we prove that $\R(I)$ is the quotient of a polynomial ring modulo relations which are either linear or arise from the toric ideal of $I$. The latter statement is analogous to a well-known result on the Rees algebra of an equigenerated strongly stable ideal \cite[Theorem 5.1]{HHV}.


\smallskip

\paragraph{\bf Structure of the paper} In \Cref{section:operations} we analyze the behavior of symmetric (strongly) shifted ideals under various algebraic operations.
In \Cref{section:Borelgens} we introduce partition Borel generators and prove \Cref{thmA}. We study principal Borel sssi's in \Cref{section:principalBorel}, while in \Cref{section:polymatroidal} we prove \Cref{thmC} and identify some distinguished classes of symmetric polymatroidal ideals (\Cref{prop:SEP} and \Cref{prop:transversal}). In  \Cref{section:invariants} we give combinatorial formulas for several numerical invariants associated with a symmetric monomial ideal. 

In the rest of the article, we exploit the combinatorial structure of sssi's to study their ordinary and symbolic powers. In \Cref{section:normality} we discuss the normality property of symmetric strongly shifted ideals, which is used in \Cref{section:assprimes} to study the associated primes of their powers.
In \Cref{section:stableAss}  we prove \Cref{thmB} and identify irredundant primary decompositions of sufficiently large powers of principal Borel sssi's.
Finally, we study the Rees algebra of symmetric strongly shifted ideals in \Cref{section:Rees}, where we prove our structure theorems \Cref{thm:fibertype} and \Cref{thm:quadraticFR} and discuss the geometry of toric varieties associated with a principal Borel sssi. 

\smallskip

\paragraph{\bf Notational conventions} 
We say that a sequence $\lambda=(\lambda_1,\dots,\lambda_n)$ of non-negative integers is a {\em partition} of $d$ of length $n$, if $\lambda_1 \leq \cdots \leq \lambda_n$ and $|\mathbf \lambda|= \lambda_1+ \dots+ \lambda_n = d$. We opt for the less standard convention of ordering the parts nondecreasingly for our conventions to match those in \cite{BDGMNORS}, where symetric shifted ideals were originally introduced. If $\l$ has distinct parts $p_1, \ldots, p_s$ which occur with multiplicities $n_1, \ldots, n_s$ respectively, we sometimes use the alternate notation $\l=(p_1^{n_1}, \ldots, p_s^{n_s})$.  
 Throughout the paper the notation $e_i$ stands for the $i$-th standard basis vector in $\Z^n$.

For a monomial $u=x_1^{a_1}\cdots x_n^{a_n}$, we write $\pa(u) \in P_n$ for the partition obtained from
$(a_1,\dots,a_n)$ by ordering its entries non-increasingly. If a monomial ideal $I \subset R$ is $\sym_n$-fixed, then a monomial $u$ is in $I$ if and only if $x^{\pa(u)}$ is in $I$.  The set $P(I)$ contains a partition $\lambda \neq (0^n)$  if and only if $I \neq R$. Throughout the paper we assume that $I\neq R$.

\smallskip

\paragraph{\bf Acknowledgements}
This work was partially supported by an NSF-AWM Mentoring Travel Grant. The first named author thanks Federico Castillo, Chris Francisco, Jeff Mermin, Jonathan Monta\~no, Jay Schweig and Gabriel Sosa for insightful discussions about strongly stable and polymatroidal ideals. The second author is partially supported by NSF grant DMS--2101225.

\section{Symmetric shifted ideals under ideal operations}
\label{section:operations}


In \cite[Proposition 1]{Cimpoeas}, Cimpoea\c{s} observes that the class of ideals of Borel type (which generalizes strongly stable ideals) is closed under sum, intersection, product, and colon operations. 
The class of strongly stable ideals is also  closed under the same operations as demonstrated in \cite[Proposition 1.2]{GuoWu} and also under taking integral closure \cite[Theorem 2.1]{GuoWu} and symbolic powers \cite[Theorem 3.8]{GuoWu}. (In the latter work, symbolic powers are taken by retaining the primary components associated to {\em minimal primes} of $I$.)

In this section we show that a majority of these statements are also true for symmetric strongly shifted ideals and fewer also hold for symmetric shifted ideals. Towards this end, it will be convenient to simplify \Cref{def:shifted} slightly.
Denoting by $G(I)$ the set of  minimal  monomial generators of $I$,  allows to single out the {\em partition generators} of $I$, namely  
\[
\L(I)=\{\lambda \in P_n: x^\lambda \in G(I)\}.
\]
In view of \cite[Lemmas 2.2 and 2.3]{BDGMNORS} it suffices to check the conditions of \Cref{def:shifted} for the partition generators $\l\in \L(I)$ rather than for arbitrary elements $\l$ in $P(I)$.

We begin by proving that the class of symmetric shifted ideals is closed under sums and intersections.

\begin{prop}
 \label{prop:sum intersection}
    Let $I,J$ be symmetric (strongly) shifted ideals. Then $I+J$ and $I \cap J$ are symmetric (strongly) shifted ideals. 
\end{prop}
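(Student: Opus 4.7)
The plan is to reduce the closure statements to their description in terms of the partition sets $P(I)$ and $P(J)$, using the observation (cited just before the proposition) that the (strong) shifted condition need only be checked on partition generators, equivalently can be checked on all of $P(I)$ since the relevant monomial operations preserve the ideal.

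\medskip

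First I would check that $I+J$ and $I\cap J$ are $\sym_n$-fixed, which is immediate: $\sigma(I+J)=\sigma(I)+\sigma(J)=I+J$ and similarly for intersection, for every $\sigma \in \sym_n$. Then I would describe their partition sets: since $I$ and $J$ are monomial ideals,
\[
P(I+J)=P(I)\cup P(J)\quad\text{and}\quad P(I\cap J)=P(I)\cap P(J).
\]

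\medskip

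For the sum, suppose $\lambda \in P(I+J)$ and fix $1\le i<j\le n$ with $\lambda_i<\lambda_j$ (or, in the non-strong case, $i<n$ with $\lambda_i<\lambda_n$). Then $\lambda$ lies in $P(I)$ or in $P(J)$; applying the (strong) shifted hypothesis to whichever ideal contains $x^\lambda$, one gets $x^\lambda(x_i/x_j)\in I$ or $x^\lambda(x_i/x_j)\in J$, and in either case this monomial lies in $I+J$. Thus $I+J$ satisfies \Cref{def:shifted}.

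\medskip

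For the intersection, suppose $\lambda\in P(I\cap J)=P(I)\cap P(J)$ and fix the same $i,j$. Applying the (strong) shifted hypothesis to $I$ gives $x^\lambda(x_i/x_j)\in I$, and applying it to $J$ gives $x^\lambda(x_i/x_j)\in J$; hence $x^\lambda(x_i/x_j)\in I\cap J$. So $I\cap J$ is symmetric (strongly) shifted as well.

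\medskip

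There is essentially no serious obstacle here: the only point that might warrant a brief remark is the use of the characterization on the full set $P(\cdot)$ rather than only the partition generators $\Lambda(\cdot)$, which is exactly the content invoked just above the proposition via \cite[Lemmas 2.2 and 2.3]{BDGMNORS}. This reduction is what lets the sum argument go through cleanly, since a partition generator of $I+J$ need not be a partition generator of $I$ or of $J$ individually, but it does lie in $P(I)\cup P(J)$.
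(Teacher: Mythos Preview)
Your proof is correct and follows essentially the same approach as the paper: both arguments verify $\sym_n$-invariance directly and then check the (strong) shifted condition on an arbitrary $\lambda$ by using that $x^\lambda\in I+J$ forces $x^\lambda\in I$ or $x^\lambda\in J$, while $x^\lambda\in I\cap J$ gives both. Your explicit identification $P(I+J)=P(I)\cup P(J)$, $P(I\cap J)=P(I)\cap P(J)$ and your closing remark about $P(\cdot)$ versus $\Lambda(\cdot)$ are minor elaborations on the paper's presentation, not a different route.
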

\begin{proof}
   We prove the statement assuming that $I$ and $J$ are symmetric strongly shifted. The proof for symmetric shifted ideals is analogous, hence left to the diligent reader.
   
   First, notice that $I+J$ and $I \cap J$ are $\sym_n$-fixed, by the symmetry of $I$ and $J$. Now, let $\l\in P_n$ be such that $\l_i<\l_j$. If $x^\l\in I+ J$, then $x^{\l} \in I$ or $x^{\l} \in J$. In the first case, $x^\l x_i/x_j\in I$ by the strongly shifted property of $I$, while in the second case $x^\l x_i/x_j\in J$ because $J$ is strongly shifted. In either case,  $x^\l x_i/x_j\in I+J$, which proves that $I+J$ is a sssi. 
   Similarly, if  $x^\l\in I\cap J$, then $x^\l x_i/x_j\in I$ and $x^\l x_i/x_j\in J$ by the strongly shifted property of $I$ and $J$. Hence, $x^\l x_i/x_j\in I\cap J$ and $I\cap J$ is symmetric strongly shifted.
\end{proof}

The following example shows that the class of symmetric shifted ideals is not closed under taking products. 
\begin{ex}
\label{ex:productnotshifted}
   In $k[x_1, x_2, x_3, x_4]$, consider the symmetric shifted ideal $I$ with 
   \[
   \Lambda (I) = \{ (1,1,2,2), (0,2,2,2), (0,1,2,3) \}
   \]
   Notice that $I$ is not a sssi, since $(0,1,2,3) \in P(I)$ but $(1,1,1,3) \notin P(I)$; see \cite[Example 2.5]{BDGMNORS}. Moreover, the maximal ideal $\m=(x_1, x_2, x_3, x_4)$ is symmetric strongly shifted, with $ \Lambda (\m) = \{ (0,0,0,1) \}$. Then, the monomial ideal $I\m$ is $\sym_n$-invariant with 
   \[
   \Lambda (I\m) = \{  (1,2,2,2), (1,1,2,3), (0,2,2,3),  (0,1,2,4), (0,1,3,3)  \}
   \]
  but is not symmetric shifted, since $(0,1,2,4) \in P(I\m)$ but $(1,1,1,4) \notin P(I\m)$.
\end{ex}

Nevertheless, the subclass of symmetric strongly shifted ideals is closed under products. 

\begin{prop}
\label{prop:product}
A product of symmetric strongly shifted ideals is symmetric strongly shifted.
\end{prop}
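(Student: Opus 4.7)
The plan is straightforward: $IJ$ is $\sym_n$-fixed because both factors are, and one verifies the Borel move property by factoring.

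Take a partition generator $\lambda \in \L(IJ)$ and indices $i < j$ satisfying $\lambda_i < \lambda_j$. Write $x^\lambda = uv$ with $u = x^a \in I$ and $v = x^b \in J$ monomials, so $a + b = \lambda$ as vectors in $\Z_{\geq 0}^n$. The inequality $a_i + b_i < a_j + b_j$ forces $a_i < a_j$ or $b_i < b_j$; without loss of generality, assume $a_i < a_j$.

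The substantive step is the following extension of the Borel move property: if $H$ is a sssi, $x^c \in H$, and $c_k < c_\ell$ for arbitrary (not necessarily sorted) positions $k, \ell$, then $x^c \cdot x_k/x_\ell \in H$. This is obtained by choosing $\sigma \in \sym_n$ that sorts $c$ into a partition, locating the images of the entries $c_k$ and $c_\ell$ at positions $k' < \ell'$ of the sorted vector, invoking the sssi condition at $(k', \ell')$, and transporting the resulting monomial back via the $\sym_n$-invariance of $H$. Applying this to $x^a \in I$ with the inequality $a_i < a_j$ yields $x^a \cdot x_i/x_j \in I$, and multiplying by $x^b \in J$ produces $x^\lambda \cdot x_i/x_j \in IJ$, which is the Borel move condition for the partition generator $\lambda$ of $IJ$. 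Since this holds for all such $\lambda$, $IJ$ is a sssi, and induction on the number of factors handles arbitrary products.

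I expect the only mild obstacle to be the careful verification of the extended Borel move property above; it is essentially a reindexing exercise using $\sym_n$-invariance and is likely already subsumed by \cite[Lemmas 2.2 and 2.3]{BDGMNORS}, which the excerpt cites precisely to allow checking the sssi definition only on partition generators rather than on all elements of $P(I)$. Notably, the argument breaks for merely shifted ideals because their Borel move requires the second index to be $n$, a constraint that the componentwise splitting $a+b=\lambda$ need not respect; this is consistent with \Cref{ex:productnotshifted}.
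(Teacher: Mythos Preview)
Your proposal is correct and follows essentially the same approach as the paper: factor a partition generator of $IJ$ as a product of monomials from $I$ and $J$, observe that the inequality on exponents must hold for at least one factor, and apply the sssi property there. The paper's presentation is marginally slicker in that it writes the factors from the outset as $\sigma(x^{\lambda})$ and $\tau(x^{\mu})$ with $\lambda,\mu$ partitions, so the needed index ordering $\sigma^{-1}(i)<\sigma^{-1}(j)$ is automatic from $\lambda$ being sorted, whereas you isolate this as a separate ``extended Borel move'' lemma; but the content is the same.
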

\begin{proof}
It suffices to prove the statement for the product of two symmetric strongly shifted ideals $I,J$. It is clear that the ideal
\[
IJ = \left(\{\sigma(x^\l)\tau(x^\mu) : \sigma, \tau\in \sym_n,  \l\in P(I), \mu\in P(J) \}\right)
\]
 is fixed by $\sym_n$. It remains to show $IJ$ is strongly shifted. Towards this end let $p\in P_n$ be such that $x^p=\sigma(x^\l)\tau(x^\mu)\in IJ$ be as above and denote the relevant monomials by $\sigma(x^\l)=x^{\sigma(\l)}\in I$ and $ \tau(x^\mu)=x^{\tau(\mu)}\in J$, respectively. Now assume that $p_i=(\sigma(\l)+\tau(\mu))_i<(\sigma(\l)+\tau(\mu))_j=p_j$. This implies that $\sigma(\l)_i<\sigma(\l)_j$ or $\tau(\mu)_i<\tau(\mu)_j$. 
 
Suppose the former case holds. The inequality $\sigma(\l)_i<\sigma(\l)_j$ can be written equivalently as $\l_{\sigma^{-1}(i)}<\l_{\sigma^{-1}(j)}$. Since $I$ is symmetric and $\sigma(x^\l)\in I$ we have that $x^\l\in I$. Since $I$ is additionally strongly shifted and $\l_{\sigma^{-1}(i)}<\l_{\sigma^{-1}(j)}$, one deduces that $x^\l x_{\sigma^{-1}(i)} /x_{\sigma^{-1}(j)} \in I$. 
Finally, applying $\sigma$ yields
$\sigma\left (x^\l x_{\sigma^{-1}(i)} /x_{\sigma^{-1}(j)} \right)=\sigma(x^{\l})x_i/x_j \in I$ and thus $x^p x_i/x_j =\sigma(x^\l)\tau(x^\mu)\in IJ$.  The latter case is identical, hence omitted.
\end{proof}

From \Cref{prop:product} we deduce that the class of symmetric strongly shifted ideals is closed under taking powers.
 
\begin{cor}
\label{cor:powers}
Powers of symmetric strongly shifted ideals are symmetric strongly shifted.
\end{cor}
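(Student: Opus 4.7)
The plan is to derive this immediately from \Cref{prop:product} by induction on the exponent. Since \Cref{prop:product} shows that the product of two symmetric strongly shifted ideals is again symmetric strongly shifted, one can set up a straightforward induction on $k \geq 1$: the base case $I^1 = I$ is symmetric strongly shifted by assumption, and the inductive step $I^{k+1} = I^k \cdot I$ follows by applying \Cref{prop:product} to the sssi's $I^k$ (which is sssi by the inductive hypothesis) and $I$.

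There is essentially no obstacle here, as the entire combinatorial content was handled in the preceding proposition; the corollary is just an iterated application. The only thing worth being mildly careful about is that the class of sssi's is closed under finite products as an associative operation, so that $I^k$ is unambiguously defined and lies in the class — but this follows at once from $\sym_n$-invariance being preserved by products and the strongly shifted property being preserved by binary products, as verified in \Cref{prop:product}.

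Given how short this argument is, I would write it as a one- or two-line proof invoking \Cref{prop:product} and induction on $k$, without further elaboration.
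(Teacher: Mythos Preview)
Your proposal is correct and matches the paper's approach exactly: the paper states the corollary immediately after \Cref{prop:product} with the remark ``From \Cref{prop:product} we deduce that the class of symmetric strongly shifted ideals is closed under taking powers,'' and gives no further proof. Your one-line induction on $k$ is precisely the intended argument.
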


\Cref{ex:productnotshifted} shows that \Cref{prop:product} cannot be extended to symmetric shifted ideals. However, we do not know of an example that would show that \Cref{cor:powers} cannot be extended to symmetric shifted ideals. Thus we are left with the following:

\begin{quest}
\label{quest:powers shifted}
Is the class of symmetric shifted ideals closed under taking powers? 
\end{quest}

\medskip
We next analyze the behavior of symmetric (strongly) shifted ideals under taking radicals and saturations. The following description of the square-free symmetric monomial ideals of $R$ will be crucial. 

\begin{rem}
\label{rem:stconfig}
Any square-free symmetric ideal is the ideal generated by all the square-free monomials of a fixed degree $d\in\N$. Such an ideal is referred to in the literature as the \emph{square-free Veronese ideal} of degree $d$. It can also be interpreted as the defining ideal of a {\em monomial star configuration}, introduced in \cite{GHM} and denoted $I_{n,c}$, since it can be described as 
\begin{equation}
\label{eq:stconfig}
I_{n,c}=\bigcap_{1\leq i_1<\cdots<i_c\leq n} (x_{i_1}, \ldots, x_{i_c}), 
\end{equation}
where $c=n-d+1$ is the height of the ideal.
 The ideals $I_{n,c}$ are symmetric strongly shifted, so in particular all square-free symmetric ideals are in fact symmetric strongly shifted. 
 \end{rem}

\begin{prop}
\label{prop:radical}
  The radical of a symmetric ideal $I$ is symmetric strongly shifted and is described as
\[
\sqrt{I}=\bigcap_{1\leq i_1<\cdots<i_c\leq n} (x_{i_1}, \ldots, x_{i_c}) = I_{n,c\,}, \quad
\text{ where } \quad
c=\codim(I).
\]
\end{prop}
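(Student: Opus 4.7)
The plan is to reduce the statement to \Cref{rem:stconfig}, which already provides the form of any square-free symmetric monomial ideal. First I would observe that for any monomial ideal $I$ the radical $\sqrt{I}$ is a square-free monomial ideal, obtained by replacing each monomial generator by its square-free part. Moreover, taking radicals commutes with the $\sym_n$-action (equivalently, $\sqrt{\sigma(I)} = \sigma(\sqrt{I})$ for every $\sigma \in \sym_n$), so if $I$ is $\sym_n$-fixed then so is $\sqrt{I}$. Hence $\sqrt{I}$ is a square-free symmetric monomial ideal.

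By \Cref{rem:stconfig}, any such ideal must coincide with $I_{n,c'}$ for some integer $c'$, and every ideal of the form $I_{n,c'}$ is symmetric strongly shifted. This immediately yields the first assertion of the proposition: $\sqrt{I}$ is symmetric strongly shifted and admits the presentation
\[
\sqrt{I} = I_{n,c'} = \bigcap_{1\leq i_1 < \cdots < i_{c'} \leq n} (x_{i_1}, \ldots, x_{i_{c'}}).
\]

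It remains to identify $c'$ with $\codim(I)$. The right-hand side above is an intersection of monomial prime ideals of height $c'$, so $\codim(I_{n,c'}) = c'$. Since passing to the radical preserves codimension, $\codim(I) = \codim(\sqrt{I}) = c'$, which gives $c' = c$ as desired. The argument is essentially a one-line application of \Cref{rem:stconfig} together with the standard fact that the radical preserves both codimension and the property of being $\sym_n$-fixed, so no substantial obstacle is expected.
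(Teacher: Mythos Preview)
Your argument is correct and follows essentially the same route as the paper: observe that $\sqrt{I}$ is symmetric and square-free, then invoke \Cref{rem:stconfig}. Your extra sentence identifying $c'=\codim(I)$ via preservation of codimension under radicals is a detail the paper leaves implicit, but the approach is the same.
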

\begin{proof}
  Notice that $\sqrt{I}$ is symmetric since $I$ is. As $\sqrt{I}$ is also square-free, the conclusion follows from \Cref{rem:stconfig}. 
\end{proof}

\Cref{ex:saturation shifted} below shows that the class of symmetric shifted ideals is not closed under colons or saturations. However, 
in \Cref{cor:saturation sssi} we prove that symmetric strongly shifted ideals are closed under saturation with respect to any symmetric monomial ideal. To prove this result, we first observe that taking colons preserves symmetry.

\begin{lem}
\label{lem:colonsymmetric}
If $I, J$ are symmetric ideals then $I:J$ is symmetric.
\end{lem}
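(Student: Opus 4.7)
The plan is to show directly that for every $\sigma \in \sym_n$, one has $\sigma(I:J) \subseteq I:J$; since this also applies to $\sigma^{-1}$, both inclusions give equality, establishing that $I:J$ is $\sym_n$-fixed. The key identity to exploit is the compatibility of the $\sym_n$-action with multiplication, namely $\sigma(r)\cdot s = \sigma\bigl(r \cdot \sigma^{-1}(s)\bigr)$ for all $r,s \in R$.

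More concretely, I would fix $r \in I:J$ and $\sigma \in \sym_n$, and show $\sigma(r) \in I:J$ by checking that $\sigma(r)\cdot s \in I$ for every $s \in J$. First, since $J$ is symmetric, $\sigma^{-1}(s) \in J$, so the defining property of the colon ideal yields $r\cdot \sigma^{-1}(s) \in I$. Then applying $\sigma$ and using that $I$ is symmetric gives $\sigma\bigl(r\cdot \sigma^{-1}(s)\bigr) \in I$. The displayed identity above rewrites this as $\sigma(r)\cdot s \in I$, completing the verification.

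There is no real obstacle in this argument; the only point requiring care is to use the symmetry of both $I$ and $J$ in the correct places, and to observe that the conclusion $\sigma(I:J) = I:J$ follows by applying the one-sided containment to $\sigma$ and $\sigma^{-1}$.
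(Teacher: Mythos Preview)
Your argument is correct and follows essentially the same approach as the paper: both use that $\sigma$ is a ring automorphism together with the $\sym_n$-invariance of $I$ and $J$ to conclude $\sigma(r)J \subseteq I$ from $rJ \subseteq I$. The paper phrases this at the level of ideals via $\sigma(f)J = \sigma(f)\sigma(J) = \sigma(fJ) \subseteq \sigma(I) = I$, while you unpack it element by element, but the content is identical.
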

\begin{proof}
Let $f \in I \colon J$. Since $I$ and $J$ are $\sym_n$-invariant, from $f J \subset I$ it follows that $\sigma(f) J = \sigma(f) \sigma(J) \subseteq \sigma(I)=I$ for all $\sigma \in \sym_n$, so $I \colon J$ is symmetric. 
\end{proof}

Our next observation is the fact that, unlike arbitrary symmetric shifted ideals, sssi's can be characterized combinatorially in terms of the so called dominance order. 

\begin{defn}
  Partitions $\lambda,\mu\in P_n$ are compared in the \textbf{dominance order} $\lhd$ by setting
 \[
 \mu \unlhd \lambda \,\text{ iff } \,\Sigma_k(\mu) \leq \Sigma_k(\l) \text{ for all } 1\leq  k\leq n.
 \]
 \end{defn}

\begin{rem}
\label{rem:dominance}
  An $\sym_n$-fixed monomial ideal $I$ is strongly shifted if and only if, for every $\lambda, \mu \in \P_n$ with $|\lambda|=|\mu|$, $\lambda \in P(I)$ and $\mu \unlhd \lambda $ imply $\mu \in \P(I)$. 
  This is because for $\l, \mu$ satisfying $|\lambda|=|\mu|$, the inequality  $\mu \unlhd \lambda $ is equivalent to $x^\mu$ being obtained from $x^\l$ by a sequence of Borel moves; see \cite[Lemma 1.3]{DeNegri} for a proof.
  \end{rem}

We can now describe the saturation of a symmetric strongly shifted ideal with respect to a square-free symmetric monomial ideal. Recall that the saturation operation on ideals is defined by $I:J^\infty=\bigcup_{i\geq 0} I: J^i$ and has the property that $I:J^\infty=I:J^N$ for $N\gg 0$.
\begin{prop}
\label{prop:colon}
Let $I$ be a sssi and let $c$ be a natural number so that $1\leq c\leq n$. 
For $\l\in P_n$ define the truncated partition  $\l_{<c}=(\l_1,\ldots, \l_{c-1})\in P_{c-1}$.
Then we have
\begin{enumerate}
\item $I:I_{n,c}^\infty=(\sigma(x^\mu) \mid \sigma\in \sym_n, \mu_{<c}=\l_{<c} \text{ for some } \l \in P(I))${\rm;}
\item $I:I_{n,c}^\infty$ is symmetric strongly shifted.
\end{enumerate}
\end{prop}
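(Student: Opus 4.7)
The plan is to prove (1) by double inclusion and then deduce (2) from a general colon lemma. For the forward inclusion in (1), since $I:I_{n,c}^\infty = I:I_{n,c}^N$ for some $N \gg 0$ by Noetherianity, if $x^\mu \in I:I_{n,c}^\infty$ then $x^\mu g \in I$ for every $g \in I_{n,c}^N$. The specific choice $g = (x_c x_{c+1}\cdots x_n)^N$ lies in $I_{n,c}^N$ as a product of $N$ squarefree monomials of degree $n-c+1$, and for $N$ large the exponent vector $\l := (\mu_1, \ldots, \mu_{c-1}, \mu_c + N, \ldots, \mu_n + N)$ is already a partition in $P(I)$ with $\l_{<c} = \mu_{<c}$.

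The reverse inclusion is the main obstacle. Given $\mu \in P_n$ and $\l \in P(I)$ with $\mu_{<c} = \l_{<c}$, I need to show $x^\mu \cdot g \in I$ for every monomial $g = x^b \in I_{n,c}^N$ with $N \gg 0$. The essential combinatorial input is that the sum of the smallest $c$ entries of $b$ is at least $N$: any generator of $I_{n,c}^N$ is a product of $N$ squarefree monomials of degree $n-c+1$, and for every $c$-subset $S \subseteq [n]$ one has $\sum_{i \in S} b_i \geq N$ by summing the trivial bound $|S \cap \supp(m_t)| \geq 1$ over the $N$ factors $m_t$. In particular, at least $n-c+1$ coordinates of $b$ exceed $N/c$. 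Setting $\pi := \pa(\mu + b)$ and $\tilde\l := (\l_1, \ldots, \l_{n-1}, \l_n + s) \in P(I)$ with $s = |\mu+b| - |\l| \geq 0$, I would verify $\pi \unlhd \tilde\l$ by comparing the partial sums $\Sigma_k$. The inequality $\Sigma_k(\pi) \leq \Sigma_k(\tilde\l)$ is equivalent to a lower bound on the bottom-$(n-k)$ sum of $\pi$. For $j := n-k < c$, this bound follows from $\mu + b \geq \mu$ coordinate-wise together with $\mu_{<c} = \l_{<c}$. For $j \geq c$, the bottom $j$ entries of $\pi$ include at least $j - c + 1$ values bounded below by $N/c$, so for $N \geq c|\l|$ the bottom-$j$ sum dominates $|\l|$ and in particular $\l_1 + \cdots + \l_j$. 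The sssi property of $I$ then yields $\pi \in P(I)$, i.e., $x^\mu g \in I$.

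For part (2), my strategy is to establish a more general statement: if $I$ is sssi and $J$ is any symmetric monomial ideal, then $I:J$ is sssi. Granting this, (2) follows by taking $J = I_{n,c}^N$, which is symmetric as a product of the symmetric ideal $I_{n,c}$, together with $I:I_{n,c}^\infty = I:I_{n,c}^N$ for $N \gg 0$. Symmetry of $I:J$ is \Cref{lem:colonsymmetric}. For the strongly shifted property I would use \Cref{rem:dominance} to translate $\l \in P(I:J)$ into $\pa(\l + \sigma(\xi)) \in P(I)$ for every $\sigma \in \sym_n$ and every $\xi \in P(J)$. Given $\mu \unlhd \l$ with $|\mu| = |\l|$, the task reduces to proving a \emph{dominance preservation lemma}:
\[
\mu \unlhd \l,\ |\mu| = |\l|,\ \xi \in P_n,\ \sigma \in \sym_n \ \Longrightarrow\ \pa(\mu + \sigma(\xi)) \unlhd \pa(\l + \xi).
\]
This follows by composing the straightforward implication $\mu + \xi \unlhd \l + \xi$ (both sides already sorted, with $\Sigma_k$ additive under coordinate-wise addition of non-decreasing vectors) with the rearrangement observation $\pa(\mu + \sigma(\xi)) \unlhd \mu + \xi$ (among all permutations, sorting both summands in the same direction maximizes the top-$k$ partial sums). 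Since $\pa(\l + \xi) \in P(I)$, the sssi property of $I$ delivers $\pa(\mu + \sigma(\xi)) \in P(I)$, as required.
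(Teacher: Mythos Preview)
Your argument is correct, with two small notational slips that do not affect validity: in the reverse inclusion of (1) your index should be $j=k-1$ rather than $j=n-k$ (the bottom-$(k-1)$ sum, not the bottom-$(n-k)$ sum, is what complements $\Sigma_k$), and ``exceed $N/c$'' should read ``$\geq N/c$'' since equality can occur.

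For part (1) your forward inclusion matches the paper. For the reverse inclusion the paper makes a different choice of comparison partition: rather than padding $\l$ to $\tilde\l=(\l_1,\ldots,\l_{n-1},\l_n+s)$, the paper first observes that $\mu+(0^{c-1},N^{n-c+1})\in P(I)$ (since $x^\l$ divides $x^\mu(x_c\cdots x_n)^N$ for $N\geq\l_n$) and then shows $\pa(x^{\mu+\alpha})\unlhd \mu+(0^{c-1},N^{n-c+1})$ using only the two facts $\alpha_i\leq N$ and $|\alpha|=N(n-c+1)$ for $x^\alpha\in I_{n,c}^N$. This avoids your pigeonhole step about $c$-subsets. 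Both routes are short; the paper's is slightly more direct because it never needs the existence of many large coordinates of $b$.

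Your treatment of part (2) is genuinely different and in fact stronger. The paper argues directly for $J=I:I_{n,c}^\infty$ by identifying $\Lambda(J)$ explicitly and checking closure under a single Borel move. You instead prove the general colon lemma: if $I$ is sssi and $J$ is any symmetric monomial ideal then $I:J$ is sssi, via the dominance preservation inequality $\pa(\mu+\sigma(\xi))\unlhd \l+\xi$ whenever $\mu\unlhd\l$ with $|\mu|=|\l|$. This yields (2) immediately and subsumes the paper's subsequent \Cref{cor:saturation sssi} without recourse to the explicit description in (1). The trade-off is that the paper's hands-on approach produces the generating set $\Lambda(I:I_{n,c}^\infty)$ as a by-product, which your abstract argument does not.
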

\begin{proof}
Denote 
$J=(\sigma(x^\mu) \mid \sigma\in \sym_n, \mu_{<c}=\l_{<c} \text{ for some } \l \in P(I))$.

(1) Since \Cref{lem:colonsymmetric} yields that $I:I_{n,c}^\infty$ is a symmetric monomial ideal, it suffices to show 
\[
P(I:I_{n,c}^\infty)=P(J)=\{\mu\in P_n\mid \mu_{<c}=\l_{<c} \text{ for some } \l \in P(I)\}.
\]
If $x^\mu\in I:I_{n,c}^\infty$, since $x_cx_{c+1}\cdots x_n\in I_{n,c}$, we have  $x^\mu(x_cx_{c+1}\cdots x_n)^N=x^\l$ for some $\l\in P(I)$ and $N\gg 0$. This implies that $\mu_{<c}=\l_{<c}$ and establishes the containment $I:I_{n,c}^\infty\subseteq J$.

Conversely, take $\mu\in P(J)$ and $\l \in P(I)$ with the property that $\mu_{<c}=\l_{<c}$. Take also $N\geq \l_n$. Then we have $x^\l \mid x^\mu(x_cx_{c+1}\cdots x_n)^N$ and therefore  $x^\mu(x_cx_{c+1}\cdots x_n)^N\in I$; thus we conclude $\mu+(0^{c-1}, N^{n-c+1})\in P(I)$. Now consider an arbitrary monomial $x^\alpha \in I_{n,c}^N$. We aim to show that $x^\mu x^\alpha \in I$ and equivalently that $\pa(x^{\mu+\alpha})\in P(I)$. To do so it suffices to show $\pa(x^{\mu+\alpha})\unlhd \mu+(0^{c-1}, N^{n-c+1})$ and use the fact that $I$ is symmetric strongly shifted. Set $p=\pa(x^{\mu+\alpha})$ and observe that $x^\alpha\in I_{n,c}^N$ implies $\alpha_i\leq N$  for each $1\leq i\leq n$, so that $p$ satisfies inequalities $p_i\leq \mu_i+N$ for each $1\leq i\leq n$. Consequently, we have 
\[
\sum_{i= k}^n p \leq \sum_{i= k}^n (\mu_i+N) =\sum_{i=k}^n \left(\mu+(0^{c-1}, N^{n-c+1}) \right)_i \text{ for } k\geq c.
\]
As for the case $k<c$, note that $|\alpha|=N(n-c+1)$, which yields 
\[
\sum_{i= k}^n p_i = \sum_{i= k}^n \pa(x^{\mu+\alpha})_i \leq \left(\sum_{i= k}^n \mu_i \right)+ |\alpha| =\sum_{i=k}^n \left(\mu+(0^{c-1}, N^{n-c+1}) \right)_i \text{ for } k< c.
\]
As discussed above, this yields $p\in P(I)$, concluding the proof of the containment $J \subseteq I:I_{n,c}^\infty$.

For (2) we will prove the equivalent assertion that $J$ is a sssi. Towards this goal, we identify the minimal generators of $J$: these are given by the partitions
\[
\Lambda(J)=\{ \mu\in P_n \mid  \mu_{<c}=\l_{<c} \text{ for some } \l \in \Lambda(I), \mu_i=\mu_{c-1} \text{ for } i\geq c\ \}.
\]
Let $\mu\in \Lambda(J)$ and consider $1\leq i<j \leq n$ so that $\mu_i<\mu_j$. By the description of $\Lambda(J)$ this yields the inequality $1\leq i<c-1$.  There exists $\l\in P(I)$ with $\mu_{<c}=\l_{<c}$. Set $\mu'=\mu+e_i-e_j$ and $\l'=\l+e_i-e_j$. Since $\mu_{<c}=\l_{<c}$ and $1\leq i<c-1$ it follows that $\l_i=\mu_i< \mu_{j}\leq \l_j$ as either $j<c$ and  $\mu_{j}= \l_j$ or $j\geq c$ and thus $\mu_j=\mu_{c-1}=\l_{c-1}\leq  \l_j$. Since $I$ is symmetric strongly shifted we have $\l'\in P(I)$ and $\mu'_{<c}=\l'_{<c}$, so we obtain $\mu'\in P(J)$, as desired.
\end{proof}


\begin{cor}
\label{cor:saturation sssi} 
   Let $I$ be a sssi and let $J$ be a symmetric monomial ideal with $\codim(J)=c$.
   Then the ideal  $I:J^\infty$ is symmetric strongly shifted and is described as $I: J^\infty = I: I_{n,c}^\infty$.
\end{cor}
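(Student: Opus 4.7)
The plan is to reduce the statement to \Cref{prop:colon} by passing from the symmetric monomial ideal $J$ to its radical, and then to exploit the general fact that the saturation $I : J^\infty$ depends only on the radical of $J$.

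First, I would invoke \Cref{prop:radical}, which gives $\sqrt{J} = I_{n,c}$ where $c = \codim(J)$. Since $I_{n,c}$ is itself radical, this is the common radical of $J$ and $I_{n,c}$.

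Next, I would recall the standard Noetherian fact that if two ideals $J_1, J_2 \subset R$ satisfy $\sqrt{J_1} = \sqrt{J_2}$, then $I : J_1^\infty = I : J_2^\infty$ for any ideal $I$. Indeed, by the Noetherian property there exist positive integers $a, b$ with $J_1^a \subseteq J_2$ and $J_2^b \subseteq J_1$; if $f J_1^N \subseteq I$ then $f J_2^{bN} \subseteq f J_1^N \subseteq I$, and symmetrically in the other direction, establishing the equality. Applied to $J_1 = J$ and $J_2 = I_{n,c}$, this yields $I : J^\infty = I : I_{n,c}^\infty$.

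Finally, I would appeal to \Cref{prop:colon}(2), which already ensures that the right-hand side $I : I_{n,c}^\infty$ is a symmetric strongly shifted ideal. This completes the proof. There is essentially no technical obstacle here: the corollary is a clean synthesis of \Cref{prop:radical} (identifying the radical of an arbitrary symmetric monomial ideal as a square-free Veronese) and \Cref{prop:colon} (handling the case of saturation by the square-free Veronese itself), glued together by the elementary observation that saturation only sees the radical.
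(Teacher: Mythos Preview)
Your proposal is correct and follows essentially the same approach as the paper: both reduce to \Cref{prop:colon}(2) by invoking \Cref{prop:radical} to identify $\sqrt{J}=I_{n,c}$ and then using that saturation depends only on the radical. The paper's justification of the latter is slightly terser (``every ideal contains a power of its radical''), but your more explicit two-sided containment argument is the same idea spelled out.
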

\begin{proof}
   First, notice that $I: J^\infty = I: \sqrt{J}^{\,\infty}$, since every ideal contains a power of its radical. Moreover, by \Cref{prop:radical} it follows that $\sqrt{J}=I_{n,c}$. 
   Therefore, $I: J^\infty = I: I_{n,c}^\infty$ is symmetric strongly shifted by \Cref{prop:colon}(2). 
\end{proof}

Thanks to \Cref{prop:colon}, we can understand the symbolic powers of a sssi. In the literature two notions of symbolic powers make an appearance. They are defined in terms of the {\em associated  primes} of $I$, the set of which we denote $\Ass(I)$ and the   {\em minimal  primes} of $I$, the set of which we denote $\Min(I)$, respectively. The symbolic powers of an ideal $I$ are defined by some authors as
\begin{equation}
\label{eq:symbolicpowerMin}
I^{(m)_{\Min}}=\bigcap_{P\in \Min(I)} (I^mR_P\cap R),
\end{equation}
and by others  as
\begin{equation}
\label{eq:symbolicpowerAss}
I^{(m)_{\Ass}}=\bigcap_{P\in \Ass(I)} (I^mR_P\cap R).
\end{equation}
The two definitions agree for ideals without embedded primes, in which case we will denote symbolic powers simply as $I^{(m)}$. The second definition has the advantage that it satisfies $I^{(1)_{\Ass}}=I$ for all ideals $I$, while the first definition is more easily handled and more relevant in geometric contexts.
Both notions of symbolic powers can be computed by saturation, a point that proves relevant in \Cref{prop:symbolic} below.

 In \cite[Theorem 4.3]{BDGMNORS} it is proven that the symbolic powers of a square-free sssi are symmetric strongly shifted. 
 In the next proposition we show that more generally the symbolic powers of any sssi are symmetric strongly shifted.

\begin{prop}
 \label{prop:symbolic}
The symbolic powers of a symmetric strongly shifted ideal are symmetric strongly shifted.
\end{prop}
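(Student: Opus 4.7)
The plan is to express the symbolic powers of $I$ as a finite intersection of saturations of $I^m$ by symmetric monomial ideals, and then invoke the closure of the class of sssi's under powers (\Cref{cor:powers}), saturation by symmetric monomial ideals (\Cref{cor:saturation sssi}), and finite intersection (\Cref{prop:sum intersection}). Both definitions \eqref{eq:symbolicpowerMin} and \eqref{eq:symbolicpowerAss} will be handled by the same argument.

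The first observation is that, because $I$ is symmetric, both $\Min(I)$ and $\Ass(I)$ are $\sym_n$-stable sets of monomial primes of $R$. Since the $\sym_n$-orbit of a monomial prime of height $c$ consists of \emph{all} monomial primes of height $c$, there is a subset $\mathcal{C}\subseteq\{1,\ldots,n\}$ (depending on which definition of symbolic power we use) such that
\[
I^{(m)}=\bigcap_{c\in\mathcal{C}}\bigcap_{P\text{ monomial prime of height }c}(I^mR_P\cap R).
\]

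The core computation is the identity, for $1\leq c<n$,
\[
\bigcap_{P\text{ monomial prime of height }c}(I^mR_P\cap R)=I^m:I_{n,c+1}^\infty.
\]
For a monomial prime $P=(x_{i_1},\ldots,x_{i_c})$, the localization-contraction of a monomial ideal is computed by saturating against the variables outside $P$: $I^mR_P\cap R=I^m:\bigl(\prod_{j\notin\{i_1,\ldots,i_c\}}x_j\bigr)^\infty$. Using that intersections of saturations satisfy $\bigcap_P(I^m:J_P^\infty)=I^m:\bigl(\sum_P J_P\bigr)^\infty$ and that radical-equivalent ideals produce the same $\infty$-saturation, the task reduces to identifying the sum $\sum_P\prod_{j\notin\supp(P)}x_j$ taken over all monomial primes $P$ of height $c$; this sum is the ideal generated by all square-free monomials of degree $n-c$, namely $I_{n,c+1}$ by \Cref{rem:stconfig}. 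The remaining case $c=n$ contributes only $I^mR_\m\cap R=I^m$, which is already a sssi by \Cref{cor:powers}.

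With this formula in hand, the conclusion is immediate: $I^m$ is a sssi by \Cref{cor:powers}; each factor $I^m:I_{n,c+1}^\infty$ is a sssi by \Cref{cor:saturation sssi} applied with $J=I_{n,c+1}$ (of codimension $c+1$); and the finite intersection of sssi's is a sssi by \Cref{prop:sum intersection}. The main obstacle will be carefully checking the core formula above, especially the passage from a finite intersection of $\infty$-saturations to a single $\infty$-saturation by a sum and the identification of its radical as the square-free Veronese ideal $I_{n,c+1}$; this relies on the standard identity $(I:A)\cap(I:B)=I:(A+B)$ together with the equivalence $I:J^\infty=I:\sqrt{J}^{\,\infty}$, but is otherwise routine.
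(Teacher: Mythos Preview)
Your argument is correct and takes a genuinely different route from the paper's. The paper expresses each symbolic power as a \emph{single} saturation
\[
I^{(m)} = I^m : J^\infty,
\]
where $J$ is the intersection of the primes in $\Ass^*(I)\setminus \Min(I)$ (respectively $\Ass^*(I)\setminus \Ass(I)_\subseteq$); here $\Ass^*(I)=\bigcup_k \Ass(I^k)$, and Brodmann's theorem is invoked to guarantee this set is finite. One then observes that $J$ is square-free and symmetric, hence a single square-free Veronese ideal, and a single application of \Cref{prop:colon}(2) finishes.

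Your approach instead groups the \emph{good} primes by height and writes $I^{(m)}$ as a finite intersection of saturations $I^m:I_{n,c+1}^\infty$, then appeals to closure under intersection. This is slightly longer but more elementary: it avoids $\Ass^*(I)$ and Brodmann's finiteness result entirely, and it makes the height-by-height structure of the symbolic power explicit. Both arguments rest on the same key input, namely \Cref{cor:saturation sssi} (equivalently \Cref{prop:colon}(2)). The identity $\bigcap_P(I^m:J_P^\infty)=I^m:(\sum_P J_P)^\infty$ that you use is indeed routine (pigeonhole on the expansion of $(\sum_P J_P)^{tN}$), and your identification of $\sum_P \prod_{j\notin P} x_j$ with the degree-$(n-c)$ square-free Veronese ideal $I_{n,c+1}$ is correct.
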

\begin{proof}
Let $I$ be a sssi. Let $\Min(I)$ denote the set of minimal associated prime ideals of $I$ and $\Ass^*(I)$ the union of the associated prime ideals of $I^n$ for all $n\geq 0$. It is known that $\Ass^*(I)$ is a finite set \cite{Bro2}. 

The $m$-th symbolic powers $I^{(m)_{\Min}}$ and $I^{(m)_{\Ass}}$ of $I$ can both be described as saturations:
\begin{subequations}
 \begin{align} 
 I^{(m)_{\Min}} =\bigcap_{P\in \Min(I)} (I^m R_P\cap R)= I^m:J^\infty  & \quad \text{ for } J =\bigcap_{P \in \Ass^*(I)\setminus \Min(I)} P  \label{eq:saturationMin}\\ 
I^{(m)_{\Ass}} = \bigcap_{P\in \Ass(I)} (I^m R_P\cap R) = I^m:J^\infty  & \quad  \text{ for } J =\bigcap_{P \in \Ass^*(I)\setminus \Ass(I)_\subseteq} P \label{eq:saturationAss}
  \end{align}
\end{subequations}
Here we denote
\[
 \Ass(I)_\subseteq=\{P: P\subseteq P' \text{ for some } P'\in  \Ass(I)\}.
\]

Observe that both ideals termed $J$ above are square-free. We now show they are symmetric. 
Indeed, consider the sets $S=\Ass(I), \Min(I), \Ass(I^m), \Ass^*(I)$. Each of them are closed under the action of the symmetric group, that is,  $P\in S$ if and only if $\sigma(P)\in S$ for all $\sigma\in \sym_n$. It follows that the set differences $\Ass^*(I)\setminus \Min(I)$ and $\Ass^*(I)\setminus \Ass(I)_\subseteq$ are also closed under the action of $\sym_n$ which implies that in both cases $J$ is symmetric. Since $J$ is symmetric and square-free, \Cref{rem:stconfig} yields that $J$ is a square-free Veronese ideal. 

To finish the proof, it then suffices to invoke \Cref{prop:colon}(2).
\end{proof}

We emphasize that \Cref{prop:colon} and \Cref{prop:symbolic} do not extend to symmetric shifted ideals which are not strongly shifted, as shown by the following example.

\begin{ex}
\label{ex:saturation shifted}
Let $I$ be the symmetric shifted ideal with $\Lambda(I)=\{(1,2,2), (0,2,3)\}$ and let $\m=(x_1,x_2,x_3)$. Note that $\m$ is symmetric strongly shifted, but $I$ is not, since $(0,2,3) \in P(I)$ but $(1,1,3) \notin P(I)$. Then the ideal
$
I^{(1)_{\Min}}=I:\m=I:\m^\infty=(x_1^2x_2^2,\,x_1^2x_3^2,\,x_2^2x_3^2)
$
is not symmetric shifted and nor is the second symbolic power
$I^{(2)_{\Min}}=(x_{1}^{2}x_{2}^{2}x_{3}^{2},\,x_{1}^{4}x_{2}^{4},\,x_{1}^{4}x_{3}^{4},\,x_{2}^{4}x_{3}^{4})$ in the sense of \eqref{eq:symbolicpowerMin}. 
\end{ex}

We do not currently know of any  ssi $I$ admitting a symbolic power $I^{(m)_{\Ass}}$ which is not symmetric shifted.

\medskip
Our last result in this section concerns the integral closure of symmetric shifted ideals. 

\begin{prop}
\label{prop:integralclosure}
The integral closure  $\ov{I}$ of a symmetric (strongly) shifted ideal $I$ is also symmetric (strongly) shifted.
\end{prop}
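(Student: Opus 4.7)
The plan is to use the Newton polyhedron characterization of the integral closure of a monomial ideal. Recall that $\ov I = (x^\alpha : \alpha \in \operatorname{NP}(I) \cap \Z_{\geq 0}^n)$, where $\operatorname{NP}(I) := \conv\{\beta \in \Z_{\geq 0}^n : x^\beta \in I\} + \mathbb{R}_{\geq 0}^n$ is the Newton polyhedron of $I$, a convex subset of $\mathbb{R}^n$. Since $I$ is $\sym_n$-fixed, the set of exponent vectors of monomials in $I$ is $\sym_n$-invariant, so $\operatorname{NP}(I)$ is $\sym_n$-invariant and thus $\ov I$ is $\sym_n$-fixed.

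For the (strongly) shifted property of $\ov I$, I would argue as follows. Fix $\l \in \P(\ov I)$ and indices $1 \leq i < j \leq n$ with $\l_i < \l_j$, taking $j = n$ in the shifted case and $j$ arbitrary in the strongly shifted case. Let $s_{ij} \in \sym_n$ denote the transposition swapping positions $i$ and $j$. Both $\l$ and $s_{ij}(\l)$ belong to $\operatorname{NP}(I)$ by its $\sym_n$-invariance. The key observation is that, for the convex parameter
\[
t = \frac{\l_j - \l_i - 1}{\l_j - \l_i} \in [0,1)
\]
(well-defined because integrality forces $\l_j - \l_i \geq 1$), a direct calculation gives
\[
t\l + (1-t)\, s_{ij}(\l) = \l + e_i - e_j,
\]
since the $i$-th and $j$-th entries of the left-hand side work out to $\l_i + (1-t)(\l_j - \l_i) = \l_i + 1$ and $\l_j - (1-t)(\l_j - \l_i) = \l_j - 1$ respectively, while all other entries are unchanged. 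Convexity of $\operatorname{NP}(I)$ then places $\l + e_i - e_j$ in $\operatorname{NP}(I) \cap \Z_{\geq 0}^n$, so $x^{\l + e_i - e_j} \in \ov I$, and the symmetry of $\ov I$ already established delivers $\pa(\l + e_i - e_j) \in \P(\ov I)$, which is exactly the Borel move required.

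I do not foresee a serious obstacle beyond the routine convex-combination check. Notably, the argument never invokes the shifted hypothesis on $I$: it uses only the symmetry of $I$, and so it proves the a priori stronger statement that $\ov I$ is symmetric strongly shifted whenever $I$ is merely symmetric. In effect, Newton-polyhedron averaging of the pair $\l, s_{ij}(\l)$ automatically upgrades symmetry to strong shiftedness upon passing to the integral closure.
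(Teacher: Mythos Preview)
Your proof is correct and takes a genuinely different route from the paper's. The paper works directly with the monomial criterion $x^\alpha\in\ov I \iff x^{s\alpha}=\prod_{k=1}^s x^{\beta^{(k)}}$ for some $s$ and $x^{\beta^{(k)}}\in I$: given $\alpha_i<\alpha_j$, it locates enough factors with $\beta^{(k)}_i<\beta^{(k)}_j$, applies Borel moves to those factors using the (strongly) shifted hypothesis on $I$, and reassembles a product witnessing $x^{\alpha+e_i-e_j}\in\ov I$. This is a hands-on combinatorial redistribution.

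Your Newton--polyhedron argument is shorter and, as you note, strictly stronger: it shows that the integral closure of \emph{any} symmetric monomial ideal is symmetric strongly shifted, since the only inputs are convexity and $\sym_n$-invariance of $\operatorname{NP}(I)$. The paper's proof genuinely consumes the shifted hypothesis on $I$ (to move inside $I$ before passing to $\ov I$), so your strengthening is not implicit there; it does, however, recover as a special case the identity $\ov{L(\l)}=\Sss(\{\l\})$ proved later in the paper by different means. One cosmetic point: the final appeal to $\pa(\l+e_i-e_j)\in P(\ov I)$ is unnecessary, since the definition only asks for $x^{\l+e_i-e_j}\in\ov I$, which you have already shown.
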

\begin{proof}
First note that the integral closure of a (not necessarily monomial) symmetric ideal is symmetric. Indeed if $f$ is a solution of a monic polynomial
\[
x^n+r_1x^{n-1}+\cdots +r_n=0
\]
with $r_i\in I^i$ and if $\sigma\in \sym_n$ then $\sigma(f)$ is a solution of the equation
\[
x^n+\sigma(r_1)x^{n-1}+\cdots +\sigma(r_n)=0
\] 
in which  each $\sigma(r_i)\in \sigma(I^i)=I^i$ because $I^i$ is stable under the action of $\sym_n$.

Second, recall that since $I$ is a monomial ideal, $x^\alpha\in \ov{I}$ if and only if it satisfies an equation of the form 
\begin{equation}
\label{eq:intcl}
x^{s\alpha}=\prod_{k=1}^s x^{ \beta^{(k)}} \text{ for some } s\in\N,
\end{equation}
where $x^{ \beta^{(k)}}\in I$ for each $k$; see \cite[p.~9]{HSwanson}. Now assume that $\alpha\in P(\ov{I})$ and take $i<j$ so that $\alpha_i<\alpha_j$ (if $I$ is shifted, assume additionally that $j=n$). Set $S=\left \{k\in \{1,\ldots, s\} \mid \beta^{(k)}_i< \beta^{(k)}_j\right \}$ and $\alpha'=\alpha+e_i-e_j$. From \eqref{eq:intcl} it follows that 
\[
\sum_{k\in S} (\beta^{(k)}_j- \beta^{(k)}_i)\geq s(\alpha_j-\alpha_i)\geq s.
\]
Therefore there exist nonnegative integers $c_k$, one for each $k\in S$, so that $\sum_{k\in S} c_k=s$ and $c_k\leq \beta^{(k)}_j- \beta^{(k)}_i$ for each $k\in S$. 
Define $\alpha' := \alpha+e_i-e_j$ and 
\[
\gamma^{(k)}=\begin{cases}
\beta^{(k)}+c_k(e_i -e_j) & \text{ if }k\in S\\
\beta^{(k)} & \text{ otherwise}
\end{cases}
\]
so that $\sum_{k=1}^n \gamma^{(k)}=\sum_{k=1}^n \beta^{(k)}+s(e_i-e_j)=s\alpha'$.
Then $\gamma^{(k)}\in I$ by the (strongly) shifted property and the relation
\[
x^{s\alpha'}=\prod_{k=1}^s x^{ \gamma^{(k)}}
\]
shows that $x^{\alpha'}\in \ov{I}$, concluding the proof.
\end{proof}

\section{Partition Borel generators and combinatorial structure}
\label{section:Borelgens}


The proofs of \Cref{prop:colon} and \Cref{prop:symbolic}, together with \Cref{ex:saturation shifted}, suggest that the algebraic differences between sssi's and symmetric shifted ideals which are not strongly shifted might be due to the combinatorial characterization of sssi's in terms of the dominance order (\Cref{rem:dominance}). 

In this section, we further investigate the role of the dominance order in determining the algebraic structure of symmetric strongly shifted ideals, by introducing the key notion of \emph{partition Borel generators} of a sssi (see \Cref{def:Borelgenerators}). 
As this definition is inspired by the notion of Borel generators of a strongly stable ideal, we begin by recalling relevant background information on strongly stable ideals which we will need throughout this section. 
\begin{defn}
  \label{def:stronglystable}
 A monomial ideal $I \subset R$ is said to be a \textbf{strongly stable ideal} if, 
  for every $\alpha  \in \N^n$ with $x^\alpha \in I$ and for every pair $i<j$ so that $\alpha_j\neq 0$ one has $x^\alpha x_i/x_j\in I$. 
  
  A monomial ideal $I \subset R$ is said to be a \textbf{stable ideal} if, 
  for every $\alpha  \in \N^n$ with  $x^\alpha \in I$, setting $\max(\alpha)=\max\{j \colon \alpha_j\neq 0\}$, for every $i<\max(\alpha)$  one has $x^\alpha x_i/x_{\max(\alpha)} \in I$. 
  
 The monomials $x^\alpha (x_i/x_j)$ and $x^\alpha (x_i/x_{\max(\alpha)})$ occurring above are referred to as being obtained from $x^{\alpha}$ via a \textbf{Borel move}. 
\end{defn}

Despite the formal analogy between \Cref{def:shifted} and \Cref{def:stronglystable}, symmetric strongly shifted ideals are almost never strongly stable. 
\begin{rem}
\label{rem:neversstable}
The only $\sym_n$-fixed ideals which are also strongly stable are the powers of the homogeneous maximal ideal. Indeed, if $I$ is strongly stable in $K[x_1, \ldots, x_n]$, then it must contain $x_1^d$, where $d\in \N$ is the least degree of a nonzero element of $I$. Hence by symmetry $I$ must contain $x_n^d$. Now the strongly stable property yields $(x_1, \ldots, x_n)^d\subseteq I$, which must be an equality since $d$ was the initial degree of $I$.
\end{rem}

Given a set of monomials $M$, the smallest strongly stable monomial ideal which contains $M$ is called the \textbf{Borel ideal} generated by $M$ and is denoted by $\bor(M)$. The following well-known result conveniently describes the set of monomials obtained from $x^\alpha$ by performing Borel moves, that is, the monomials in the Borel ideal $\bor(\{x^\alpha\})$.
 
\begin{rem}[{\cite[Lemma 1.3]{DeNegri}}]
\label{rem:Borelorder}
For each  $\alpha=(\alpha_1,\dots,\alpha_n)\in \N^n$, define $\Sigma_k(\alpha)=  \alpha_k + \cdots + \alpha_n$.
Monomials $x^\alpha$ and $x^\beta$ with $|\alpha|=|\beta|$ satisfy $x^\alpha\in \bor(\{x^\beta\})$ if and only if 
$\Sigma_k(\alpha) \leq \Sigma_k(\beta)$  for all  $1\leq  k\leq n$. We denote this condition by $x^\alpha \prec_B x^\beta$. This defines a partial order on the set of $n$-tuples of nonnegative integers which restricts to the dominance order on $P_n$.
\end{rem}

The relationship between strongly stable ideals and symmetric strongly shifted ideals is best understood in terms of the following notion, which can be interpreted as a symmetric analogue to the notion of Borel generators of a strongly stable ideal.

\begin{defn}
  \label{def:Borelgenerators}
  Let $B \subseteq P_n$ be a set of partitions and set  $x^B = \{x^\l: \l\in B\}$. 
  
  The \textbf{symmetric shifted ideal generated by $B$}, denoted $\Ss(B)$, is defined to be the smallest symmetric shifted ideal which contains $x^B$. Similarly, define the \textbf{symmetric strongly shifted ideal generated by $B$}, denoted $\Sss(B)$, to be the smallest symmetric strongly shifted ideal which contains $x^B$. 
  
 Conversely, for a symmetric strongly shifted (resp. shifted) ideal $I$ we define the set of  \textbf{partition Borel generators} of $I$, denoted $B(I)$, to be the smallest $B\subseteq P_n$ so that $I=\Sss(B)$ (resp. $I= \Ss(B)$).
\end{defn}

In analogy with \Cref{rem:Borelorder} for strongly stable ideals, the partition Borel generators of a symmetric strongly shifted ideal coincide with the maximal elements in each degree of $\L(I)$ with respect to the dominance order. More precisely, if for a set $C \subseteq P_n$ $\max_\unlhd \{C\}$ denotes the set of maximal elements of $C$ in dominance order, we have the following characterization. 
\begin{prop}
\label{prop:Borelgens}
  \begin{enumerate}
   \item Let $B \subseteq P_n$ be a set of partitions, then 
   \[
  \Sss(B)= \left(\{\sigma(x^\mu) : \sigma\in \sym_n, \ \mu \in P_n,  \ \exists \l\in B \text{ such that }  |\lambda|=|\mu| \text{ and } \ \mu \unlhd \lambda \}\right).
  \]
  \item Let $I$ be a symmetric strongly shifted ideal. Then, $B(I)=\max_\unlhd\{\Lambda(I)\}$, that is, 
\[
B(I)=\{ \lambda \in \L(I) : \mu \in \Lambda(I) \text{ with } |\lambda|=|\mu|, \  \lambda \unlhd \mu \text{ implies } \l=\mu\}.
\]
are the partition Borel generators of $I$. 
\end{enumerate}
In particular, for any symmetric strongly shifted ideal $I$ one has $I=\Sss(B(I))$.
\end{prop}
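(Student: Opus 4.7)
My plan is to prove (1) first and then use it to prove (2).

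For (1), let $J$ denote the ideal on the right-hand side. The containment $\Sss(B)\supseteq J$ holds because starting from $x^\l\in\Sss(B)$, \Cref{rem:dominance} yields $x^\mu\in\Sss(B)$ for every $\mu\unlhd\l$ with $|\mu|=|\l|$, and $\sym_n$-invariance of $\Sss(B)$ gives $\sigma(x^\mu)\in\Sss(B)$. For the reverse, I will check that $J$ is itself a symmetric strongly shifted ideal containing $x^B$. Symmetry and $x^B\subseteq J$ are immediate. Because $\mu$ and $\nu$ are partitions, some permutation of $x^\mu$ divides $x^\nu$ iff $\mu_k\le\nu_k$ for all $k$ (sorting preserves componentwise inequalities), which yields the description
\[
 P(J)=\{\nu\in P_n : \exists\, \mu\in P_n,\ \l\in B,\ \mu\unlhd\l,\ |\mu|=|\l|,\ \mu\le\nu\ \text{componentwise}\}.
\]
Verifying the strongly shifted condition via \Cref{rem:dominance} reduces to a short case analysis on a witness $\mu\unlhd\l$ of $\nu\in P(J)$ and a Borel move $\nu\mapsto\pa(\nu-e_j+e_i)$: when $\mu_i<\mu_j$, one replaces $\mu$ by $\pa(\mu-e_j+e_i)\unlhd\l$, and the componentwise inequality $\mu-e_j+e_i\le\nu-e_j+e_i$ passes to sorted forms; when $\mu_i=\mu_j$, the chain $\mu_j=\mu_i\le\nu_i<\nu_j$ gives $\mu_j\le\nu_j-1$, so $\mu$ itself already fits under the shifted partition.

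For (2), set $B^*:=\max_\unlhd\Lambda(I)$. The equality $\Sss(B^*)=I$ follows from: $\subseteq$ is clear since $B^*\subseteq P(I)$; for $\supseteq$, every $\l\in\Lambda(I)$ is $\unlhd$-dominated by some $\l^*\in B^*$ maximal in $\Lambda(I)$ at degree $|\l|$ (a finite poset), so part (1) yields $x^\l\in\Sss(\{\l^*\})\subseteq\Sss(B^*)$. This gives $B(I)\subseteq B^*$. For the reverse inclusion, fix $B$ with $\Sss(B)=I$ and $\l\in B^*$. Part (1) applied to $B$ yields $\mu\unlhd\l'\in B$ with $|\mu|=|\l'|$ and $\mu\le\l$ componentwise; since $\mu\in P(I)$ divides the minimal generator $x^\l$, we get $\mu=\l$, hence $\l\unlhd\l'$ with $|\l|=|\l'|=:d$. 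Supposing $\l\lhd\l'$ strictly, apply part (1) now to $B^*$: there exist $\mu'\unlhd\l^{**}\in B^*$ with $|\mu'|=|\l^{**}|$ and $\mu'\le\l'$ componentwise. If $|\l^{**}|=d$, then $\mu'=\l'$ forces $\l'\unlhd\l^{**}$; combined with $\l\unlhd\l'$ and the $\unlhd$-maximality of $\l$ in $\Lambda(I)$ at degree $d$, this forces $\l=\l^{**}=\l'$, contradicting strictness.

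The case $|\l^{**}|<d$ is the main obstacle: here $\mu'\in P(I)$ with $\mu'\lneq\l'$ componentwise, and the aim is to contradict $\l\in\Lambda(I)$ by producing a partition $\tilde\mu$ with $|\tilde\mu|=|\mu'|$, $\tilde\mu\unlhd\mu'$, and $\tilde\mu\le\l$ componentwise; then \Cref{rem:dominance} places $\tilde\mu\in P(I)$ and $x^{\tilde\mu}$ becomes a proper divisor of $x^\l$ inside $I$. I would construct $\tilde\mu$ by a tracking argument: realize $\l\unlhd\l'$ as a chain of Borel moves $\l'=\pi^{(0)}\to\cdots\to\pi^{(s)}=\l$ with $\pi^{(k+1)}=\pa(\pi^{(k)}-e_{j_k}+e_{i_k})$, and inductively maintain partitions $\tau^{(k)}\unlhd\mu'$ of size $|\mu'|$ with $\tau^{(k)}\le\pi^{(k)}$ componentwise. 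At step $k$, if $\tau^{(k)}_{j_k}<\pi^{(k)}_{j_k}$ take $\tau^{(k+1)}:=\tau^{(k)}$; otherwise $\tau^{(k)}_{j_k}=\pi^{(k)}_{j_k}$, and the chain of inequalities $\tau^{(k)}_{i_k}\le\pi^{(k)}_{i_k}<\pi^{(k)}_{j_k}=\tau^{(k)}_{j_k}$ validates the matching Borel move $\tau^{(k+1)}:=\pa(\tau^{(k)}-e_{j_k}+e_{i_k})$. Setting $\tilde\mu:=\tau^{(s)}$ completes the proof, and the final statement $I=\Sss(B(I))$ follows at once.
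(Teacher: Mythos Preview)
Your proof is correct. For part~(1) you follow essentially the paper's strategy --- show that the right-hand ideal $J$ is a symmetric strongly shifted ideal containing $x^B$ (hence $\Sss(B)\subseteq J$), and that $J\subseteq\Sss(B)$ via \Cref{rem:dominance} --- though you supply a more explicit description of $P(J)$ and a two-case verification of Borel-move closure, where the paper simply notes that $\Lambda(J)$ lies in a set $S$ closed under Borel moves.

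For part~(2) your argument goes further than the paper's. Both establish $\Sss(B^*)=I$ for $B^*=\max_\unlhd\Lambda(I)$. The paper then only checks that $B^*$ is inclusion-minimal among \emph{its own subsets}: for each $\lambda\in B^*$ one has $x^\lambda\notin\Sss(B^*\setminus\{\lambda\})$. You prove the stronger statement that $B^*\subseteq B$ for \emph{every} $B$ with $\Sss(B)=I$, which is what ``smallest'' literally demands. The obstruction you identify --- an arbitrary such $B$ may contain a partition $\lambda'\in B$ with $\lambda\unlhd\lambda'$, $|\lambda|=|\lambda'|$, but $\lambda'\notin\Lambda(I)$, so maximality of $\lambda$ in $\Lambda(I)$ does not directly force $\lambda=\lambda'$ --- is genuine, and your tracking argument along a chain of Borel moves from $\lambda'$ down to $\lambda$ correctly resolves it by producing a proper divisor of $x^\lambda$ in $I$, contradicting $\lambda\in\Lambda(I)$. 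This is a real strengthening; the paper leaves this point implicit.
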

\begin{proof}
 (1)  Denote  $ I = \left( \{\sigma(x^\mu) : \sigma\in \sym_n, \ \mu \in P_n,  \ \exists \l\in B \text{ such that }  |\lambda|=|\mu| \text{ and } \ \mu \unlhd \lambda \} \right)$. 
 It is clear that $I$ is symmetric and $ x^B\subseteq I$. To show that $I$ is strongly shifted, notice that $\L(I) \subseteq S$, where
 \[
 S=\{\mu :\mu \in P_n,  \ \exists \l\in B \text{ such that }  |\lambda|=|\mu| \text{ and } \ \mu \unlhd \lambda \} \subset I.
 \]
 and $S$ is closed under Borel moves. By \Cref{rem:dominance} this ensures that $I$ is a sssi.
Finally, $I$ is indeed the smallest symmetric strongly shifted ideal containing $x^B$, since any sssi $I'$ with $I' \supseteq x^B$ must contain $x^S$ by \Cref{rem:dominance}, hence it must contain $I$ by symmetry.

(2) Set $B=\max_\unlhd\{\Lambda(I)\}$. Since $B\subseteq \Lambda(I)$, it is clear that $I \supseteq x^{B}$, whence $I \supseteq \Sss(B)$ since $I$ is an sssi. Moreover, by the definition of $B$, for each $\mu\in \Lambda(I)$ we have that $\mu\unlhd\l$ for some $\l\in B(I)$, hence $I \subseteq \Sss(B)$ by \Cref{rem:dominance}, so equality holds. That $B$ is the smallest set of Borel generators of $I$ follows  by noting that  $\l \in B(I)$ and  $B'\subset B\setminus\{\l\}$ yields  $\Sss(B')\subset I\setminus\{x^\l\}$. Thus we conclude that $B=B(I)$.
\end{proof}

\medskip

For a set of partitions $B \subseteq P_n$, let $\bor(x^B)$ be the strongly stable ideal generated by $x^B$. The next result clarifies how the ideal $\Sss(B)$ relates with $\bor(x^B)$. It provides a containment between these ideals,  which becomes an equality when one restricts to monomials whose exponents are partitions.


\begin{prop}
\label{prop:partitions sstable}
    Let $B \subseteq P_n$ be a set of partitions. Then, $\Sss(B)\subseteq \bor(x^B)$ and 
    \begin{equation}\label{eq:claim}
    P\left(\Sss(B)\right)= \{\l\in P_n: x^\l\in \bor(x^B)\}.
 \end{equation}
\end{prop}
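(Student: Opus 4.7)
The plan is to establish both assertions using the explicit description of $\Sss(B)$ from \Cref{prop:Borelgens}(1) and the characterization of $\bor(x^B)$ via \Cref{rem:Borelorder}. For part (1), I would unwind \Cref{prop:Borelgens}(1) to describe the generators of $\Sss(B)$ as monomials $\sigma(x^\mu) = x^{\sigma(\mu)}$ with $\mu \in P_n$, $\sigma \in \sym_n$, and $\mu \unlhd \lambda$ for some $\lambda \in B$ with $|\mu|=|\lambda|$. For each such generator I would verify $\sigma(\mu) \prec_B \lambda$, which by \Cref{rem:Borelorder} places $x^{\sigma(\mu)}$ in $\bor(x^\lambda) \subseteq \bor(x^B)$. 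The key observation is that $\Sigma_k(\sigma(\mu))$ is the sum of some $n-k+1$ entries of $\mu$, and since $\mu$ is nondecreasing this sum cannot exceed $\Sigma_k(\mu)$, which in turn is at most $\Sigma_k(\lambda)$ by $\mu \unlhd \lambda$. This proves (1) and immediately yields the inclusion $\subseteq$ in the equation of (2).

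For the reverse inclusion in (2), I would fix $\lambda \in P_n$ with $x^\lambda \in \bor(x^B)$. By \Cref{rem:Borelorder} this gives $\mu \in B$ and $\alpha \in \N^n$ with $\alpha \prec_B \mu$ and $\alpha \leq \lambda$ componentwise. To conclude $x^\lambda \in \Sss(B)$, by \Cref{prop:Borelgens}(1) (taking $\sigma = \mathrm{id}$) it suffices to produce a partition $\nu \in P_n$ satisfying $|\nu|=|\mu|$, $\nu \unlhd \mu$, and $\nu \leq \lambda$ componentwise, for then $x^\nu \in \Sss(\{\mu\}) \subseteq \Sss(B)$ and $x^\nu \mid x^\lambda$. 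Rewriting $\alpha \prec_B \mu$ via complementary partial sums as $\sum_{j \leq k}\alpha_j \geq \sum_{j \leq k}\mu_j$ and combining with $\alpha \leq \lambda$, I obtain the key hypothesis $\sum_{j=1}^{k} \lambda_j \geq \sum_{j=1}^{k} \mu_j$ for every $k = 1,\dots,n$.

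To build $\nu$ I would run a right-to-left greedy algorithm: set $c_{n+1}=0$ and $\nu_{n+1}=+\infty$, and for $i=n,n-1,\dots,1$ assign
\[
\nu_i := \min\bigl(\lambda_i,\; \mu_i + c_{i+1},\; \nu_{i+1}\bigr), \qquad c_i := \mu_i + c_{i+1} - \nu_i \geq 0.
\]
The three arguments of the $\min$ ensure, respectively, that $\nu \leq \lambda$ componentwise, that $c_i \geq 0$, and that $\nu \in P_n$; moreover a telescoping identity gives $\Sigma_k(\mu)-\Sigma_k(\nu) = c_k$, so the outstanding conditions $|\nu|=|\mu|$ and $\nu \unlhd \mu$ both reduce to the single statement $c_1 = 0$. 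The main obstacle, and the only step that genuinely uses the partial-sum hypothesis, is to prove by downward induction on $i$ the invariant $c_i \leq P_{i-1}$, where $P_{i-1} := \sum_{j \leq i-1}(\lambda_j - \mu_j) \geq 0$. The inductive step splits according to which argument of the $\min$ realizes $\nu_i$: the cases $\nu_i = \lambda_i$ and $\nu_i = \mu_i + c_{i+1}$ follow by a short computation using the inductive bound on $c_{i+1}$; the case $\nu_i = \nu_{i+1}$ is handled by following the maximal chain $\nu_i = \nu_{i+1} = \cdots = \nu_{i+j}$ down to the first index where another argument of the $\min$ is active, then telescoping the $c_k$-recursions using the monotonicity of $\mu$ and $\lambda$. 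Applying the invariant at $i=1$ gives $c_1 \leq P_0 = 0$, hence $c_1 = 0$, which completes the construction of $\nu$ and thereby the proof.
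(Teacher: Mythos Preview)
Your proof is correct. For the containment $\Sss(B)\subseteq\bor(x^B)$ and the inclusion $\subseteq$ in \eqref{eq:claim} you argue essentially as the paper does. For the harder inclusion $\supseteq$ you take a genuinely different route. After extracting from \Cref{rem:Borelorder} a partition $\mu\in B$ (the paper calls it $\beta$) together with the key inequalities $\sum_{j\le k}\lambda_j\ge\sum_{j\le k}\mu_j$, the paper builds by an explicit one-line formula a partition $\gamma$ of weight $|\lambda|$ with $x^\mu\mid x^\gamma$ and $\lambda\unlhd\gamma$, so that $x^\gamma\in\Sss(B)$ as a multiple of a generator and the sssi property then forces $x^\lambda\in\Sss(B)$. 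You instead run a right-to-left greedy algorithm to produce a partition $\nu$ of weight $|\mu|$ with $\nu\unlhd\mu$ and $\nu\le\lambda$ componentwise, so that $x^\nu$ is already a minimal generator of $\Sss(B)$ dividing $x^\lambda$. The two constructions are dual: the paper goes ``up'' to weight $|\lambda|$, you go ``down'' to weight $|\mu|$. The paper's formula is shorter to write down; your algorithm makes the role of the partial-sum hypothesis very transparent and in addition pinpoints a minimal generator beneath $x^\lambda$.

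One simplification you missed: your Case~3 is in fact vacuous, so the chain-following and telescoping sketch is unnecessary. Suppose $\nu_{i+1}<\lambda_i$ and $\nu_{i+1}<\mu_i+c_{i+1}$ both hold strictly. From $\lambda_i\le\lambda_{i+1}$ we get $\nu_{i+1}\neq\lambda_{i+1}$; and if $c_{i+1}=0$ then $\nu_{i+1}=\mu_{i+1}+c_{i+2}\ge\mu_{i+1}\ge\mu_i=\mu_i+c_{i+1}$, contradicting $\nu_{i+1}<\mu_i+c_{i+1}$, so $c_{i+1}>0$, i.e.\ $\nu_{i+1}<\mu_{i+1}+c_{i+2}$. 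Thus Case~3 is strictly active at $i+1$ as well, and by induction at every larger index, contradicting $\nu_{n+1}=+\infty$. Hence only Cases~1 and~2 ever occur, and the invariant $c_i\le P_{i-1}$ follows immediately from the two short computations you already gave.
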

\begin{proof}
Let  $\sigma(x^\l)\in \Sss(B)$ for some $\sigma\in \sym_n, \l\in P_n$. 
Since $\l$ is a partition, we have $\displaystyle{\Sigma_k (\sigma(\l)) = \sum_{i=k}^{n} \l_{\sigma^{-1}(i)} \leq \sum_{i=k}^{n} \l_i =  \Sigma_k(\l)}$, which means  that $\sigma(x^\l) \prec_B x^\l$ by  \Cref{rem:Borelorder}. Furthermore, since $x^\l\in \Sss(B)$, \Cref{rem:dominance} together with \Cref{rem:Borelorder} then imply that $\sigma(x^\l)\in\bor(\{x^\l\})\subseteq \bor(x^B)$ for each $\sigma \in \sym_n$, which justifies the ideal containment. 

%

The first part of the proof yields the containment $P\left(\Sss(B)\right)\subseteq  \{\l\in P_n: x^\l\in \bor(x^B)\}$. 

For the converse, let $\l\in P_n$ with $x^\l\in \bor(x^B)$. Then there exists  a minimal generator of $\bor(x^B)$, $x^\alpha$, so that $x^\alpha|x^\l$ and 
$x^\alpha\prec_B x^\beta$ for some $\beta\in B$ with $|\alpha|=|\beta|$. This yields $\l_i\geq \alpha_i$ for all $1\leq i \leq n$ and $\sum_{i=k}^n\alpha_i\leq \sum_{i=k}^n \beta_i$ for all $1\leq k\leq n$.
Note that this implies 
\begin{equation}
\label{eq:lab}
\sum_{i=1}^k\l_i\geq \sum_{i=1}^k \alpha_i\geq \sum_{i=1}^k\beta_i \text{ for each } 1\leq k\leq n.
\end{equation}

 If $|\l|=|\alpha|$ then $\l=\alpha$, which yields $\l\unlhd \beta$ and thus $x^\l\in \Sss(B)$.

Assume now that $|\l|>|\alpha|=|\beta|$ and set $q=\max\{k: \sum_{i=1}^k\l_i\leq |\beta|\}$ and $t=|\beta|-\sum_{i=1}^q\l_i$. By the assumption we have $q<n$. Moreover, since $\sum_{i=1}^{q+1}\l_i>|\beta|$ we deduce that $\l_{q+1}>t$. Now consider the vector
\[
\gamma=(\beta_1, \ldots, \beta_q, \beta_{q+1}+\l_{q+1}-t, \beta_{q+2}+\l_{q+2}, \ldots, \beta_{n}+\l_{n}).
\]
Note that $\gamma$ is a partition since $\beta, \lambda \in P_n$ and since $\l_{q+1}-t>0$. Moreover, observe that $x^\beta\mid x^\gamma$, which yields $x^\gamma\in \Sss(B)$ since $x^\beta\in \Sss(B)$. The sum of entries of $\gamma$ is
\[
|\gamma|=\sum_{i=1}^n\beta_i+\sum_{i=q+1}^n \l_i-t=|\beta|+\sum_{i=q+1}^n \l_i-|\beta|+\sum_{i=1}^q \l_i=|\l|.
\]
We claim that $\l \unlhd \gamma$. To see this, compute
\[
\sum_{i=k}^n\gamma_i =
\begin{cases}
\sum_{i=k}^n\beta_i + \sum_{i=k}^n\lambda_i  & \text{ if } k>q+1\\
\sum_{i=k}^n\beta_i + \sum_{i=q+1}^n\lambda_i -t   & \text{ if } k\leq q+1.
\end{cases}
\]
If $k>q+1$ it is clear that $\sum_{i=k}^n\gamma_i \geq \sum_{i=k}^n\l_i$. If $k\leq q+1$, then inequality \eqref{eq:lab} yields
\[
\sum_{i=k}^n\gamma_i =\sum_{i=k}^n\beta_i +\sum_{i=q+1}^n\lambda_i -|\beta|+ \sum_{i=1}^q\lambda_i =|\l|- \sum_{i=1}^{k-1}\beta_i \geq |\l|- \sum_{i=1}^{k-1}\l_i=\sum_{i=k}^{n}\l_i.
\]
Since we have shown $\l\unlhd \gamma$ and $x^\gamma\in \Sss(B)$, we deduce that $x^\l\in\Sss(B)$ by \Cref{rem:dominance} and thus $\l\in P(\Sss(B))$, as desired.

\end{proof}

The conclusion of \Cref{prop:partitions sstable} does not hold if one replaces $\Sss(B)$ with $\Ss(B)$ and $\bor(x^B)$ with the smallest stable ideal containing $x^B$, denoted $\St(x^B)$.

\begin{ex}
Let $B = \{ (0,0,1,1) \} $, $J= \St(x^B)= (x_3x_4, x_3^2, x_2x_3, x_1x_3)$ and let $I = \Ss ( \{\l\in P_n: x^\l\in J\})$. 
Observe that $I \nsubseteq J$, since $x_1x_2 \in I \setminus J$.
\end{ex}

 Building upon the relationship between $\Sss(B)$ and $\bor(x^B)$, \Cref{thm:sshifted vs sstable}  shows that an ideal $I$ is an sssi if and only if it can be obtained from a strongly stable ideal $J$ by {\em symmetrization}, i.e.,  
$I=\bigcap_{\sigma \in \sym_n} \sigma(J)$.

\begin{thm}
 \label{thm:sshifted vs sstable}
 \begin{enumerate}
     \item Let $J$ be a strongly stable ideal. Then the ideal
        \begin{equation}
        \label{eq:symmetrization}
        I = \bigcap_{\sigma \in \sym_n} \sigma(J)
\end{equation}
        is symmetric strongly shifted, with $P(I)= \{\l \in P_n : x^{\l} \in J \}$.
     \item  Conversely, every symmetric strongly shifted ideal $I$ has the form in \eqref{eq:symmetrization} for some strongly stable ideal $J$. In detail, let $B=B(I) \subset P_n$. Then, the ideal $J= \bor(x^B)$ satisfies        \[
        I = \bigcap_{\sigma \in \sym_n} \sigma(J).
        \]
    Moreover, $J$ is the smallest strongly stable ideal with this property. 
 \end{enumerate}
\end{thm}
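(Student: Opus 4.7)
The plan is to reduce both parts of the theorem to the dominance-order characterization of sssi's given in \Cref{rem:dominance} together with its translation into the Borel order via \Cref{rem:Borelorder}; once this is in place, the work done in \Cref{prop:Borelgens} and \Cref{prop:partitions sstable} does the heavy lifting.

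For (1), the ideal $I=\bigcap_{\sigma\in\sym_n}\sigma(J)$ is manifestly $\sym_n$-stable and monomial. The key observation is that for a partition $\lambda\in P_n$, membership $x^\lambda\in I$ amounts to asking that every rearrangement $x^\beta$ of $x^\lambda$ lies in $J$. Since $\lambda$ is nondecreasing, any such rearrangement $\beta$ satisfies $\Sigma_k(\beta)\leq \Sigma_k(\lambda)$ for all $k$, i.e., $x^\beta\prec_B x^\lambda$ by \Cref{rem:Borelorder}. Strong stability of $J$ therefore ensures $x^\beta\in J$ once $x^\lambda\in J$, so $P(I)=\{\lambda\in P_n:x^\lambda\in J\}$. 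To conclude that $I$ is strongly shifted I invoke the dominance-order criterion of \Cref{rem:dominance}: if $\lambda,\mu\in P_n$ satisfy $|\lambda|=|\mu|$, $\lambda\in P(I)$, and $\mu\unlhd\lambda$, then $x^\mu\prec_B x^\lambda$ and strong stability of $J$ again yields $x^\mu\in J$, hence $\mu\in P(I)$.

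For (2), let $B=B(I)$ and $J=\bor(x^B)$. Applying (1) gives that $\bigcap_\sigma\sigma(J)$ is an sssi whose partition exponent set is $\{\lambda\in P_n:x^\lambda\in\bor(x^B)\}$. By \Cref{prop:partitions sstable} this set coincides with $P(\Sss(B))$, and the closing sentence of \Cref{prop:Borelgens} yields $\Sss(B)=\Sss(B(I))=I$. Since $I$ and $\bigcap_\sigma\sigma(J)$ are both symmetric monomial ideals with the same set of partition exponents, they are equal. For the minimality of $J$: if $J'$ is any strongly stable ideal satisfying $I=\bigcap_\sigma\sigma(J')$, then taking $\sigma=\mathrm{id}$ yields $x^B\subseteq I\subseteq J'$, and strong stability of $J'$ forces $\bor(x^B)=J\subseteq J'$.

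The only substantive obstacle is the ``symmetrization collapses to the sorted case'' step of part (1), namely verifying that for a partition $\lambda$, the single condition $x^\lambda\in J$ already forces every rearrangement of $x^\lambda$ to lie in $J$. This is precisely where the nondecreasing convention on partitions and the strong stability hypothesis interact, and it is what makes the \emph{a priori} rigid intersection $\bigcap_\sigma\sigma(J)$ computable in terms of the much simpler set of sorted exponent vectors.
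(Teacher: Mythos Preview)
Your proposal is correct and follows essentially the same route as the paper: both arguments hinge on \Cref{rem:Borelorder}, \Cref{rem:dominance}, and \Cref{prop:partitions sstable}, and your observation that every rearrangement $\beta$ of a partition $\lambda$ satisfies $x^\beta\prec_B x^\lambda$ is exactly the content the paper extracts from \Cref{prop:partitions sstable} in its proof of (1). One organizational difference worth noting: in (2) you invoke part (1) to identify $P\bigl(\bigcap_\sigma\sigma(J)\bigr)$ and then match partition sets, whereas the paper argues the two inclusions $\Sss(B)\subseteq\bigcap_\sigma\sigma(\bor(x^B))$ and its reverse directly; your route is marginally cleaner, and you also supply the minimality argument for $J$, which the paper's written proof leaves implicit.
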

\begin{proof}
(1) It is clear that $I$ is symmetric and that $P(I) \subseteq \{\l \in P_n : x^{\l} \in J \}$. Moreover, since $J$ is strongly stable, it follows from \Cref{prop:partitions sstable}  that  for each $\sigma \in \sym_n$, $\sigma(x^\l) \in \bor(\{x^\l\}) \subseteq J$ and consequently $x^\l\in I$ whenever $\l\in P_n$ and $x^\l \in J$. Therefore we have $\{\l \in P_n : x^{\l} \in J \} \subseteq P(I) $ and thus equality holds. Since $J$ is closed under Borel moves, it then follows that $P(I)$ is also closed under Borel moves, i.e. $I$ is a sssi.

(2) From \Cref{prop:partitions sstable} we know that $\Sss(B)\subseteq \bor(x^B)$. Applying permutations to this containment, we obtain that $\Sss(B)=  \sigma (\Sss(B)) \subseteq \sigma (\bor(x^B))$ for every $\sigma\in \sym_n$. Therefore, $\Sss(B) \subseteq \bigcap_{\sigma\in \sym_n} \sigma (\bor(x^B))$.  For the opposite containment, notice that $x^\alpha\in \bigcap_{\sigma\in \sym_n} \sigma (\bor(x^B))$ implies that $\sigma(x^\alpha)\in \bor(x^B)$ for all $\sigma \in \sym_n$ and in particular that $x^{\pa(x^\alpha)}\in \bor(x^B)$. Setting $\l=\pa(x^\alpha)$ we deduce by \Cref{prop:partitions sstable}  that $\l\in P\left(\Sss(B)\right)$. It follows by the $\sym_n$-invariance of $\Sss(B)$ that also  $x^\alpha\in \Sss(B)$, thus finishing the proof. 
\end{proof}

In light of \Cref{thm:sshifted vs sstable}, it is natural to expect that algebraic properties which are well-behaved under taking intersections are preserved under symmetrization.  \Cref{prop:normal} and \Cref{prop:intersection symbolic} will present instances when this is indeed the case.

\medskip
Analogously to \cite[Propositions 2.15, 2.16 and 2.17]{FMS11} for strongly stable ideals, 
our next goal is to describe sums, intersections and products of symmetric strongly shifted ideals in terms of partition Borel generators. To do so we must consider the lattice structure of $P_n$.

The following lemma is inspired by a similar description in \cite{Brylawski} for the lattice structure on the set of partitions of {\em fixed sum} $P(d)=\{ \l$ is a partition and $|\l|=d\}$. We include the proof here for lack of a specific reference which treats the case of $P_n$, the set of partitions with $n$ parts.

\begin{lem}
\label{lem:lattice}
    For every $n$, the set $P_n $ forms a lattice with respect to the dominance order.
\end{lem}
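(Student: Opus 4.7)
The plan is to construct binary meets in $P_n$ explicitly and then use local finiteness to deduce the existence of binary joins. I will first translate the problem: for $\lambda \in P_n$, set $\Sigma_{n+1}(\lambda) := 0$. Then the sequence $(\Sigma_1(\lambda), \ldots, \Sigma_{n+1}(\lambda))$ is integer-valued, non-negative, non-increasing (since $\Sigma_k(\lambda) - \Sigma_{k+1}(\lambda) = \lambda_k \geq 0$), and \emph{concave}, i.e., $2\Sigma_{k+1}(\lambda) \geq \Sigma_k(\lambda) + \Sigma_{k+2}(\lambda)$ for $1 \leq k \leq n-1$, the last condition being equivalent to the partition inequality $\lambda_k \leq \lambda_{k+1}$. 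Conversely, any such sequence uniquely determines a partition in $P_n$ via $\lambda_k := \Sigma_k(\lambda) - \Sigma_{k+1}(\lambda)$. Under this correspondence, $\mu \unlhd \lambda$ becomes the pointwise inequality $\Sigma_k(\mu) \leq \Sigma_k(\lambda)$ for all $k$.

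To build the meet, I would define $\lambda \wedge \mu$ via $\Sigma_k(\lambda \wedge \mu) := \min(\Sigma_k(\lambda), \Sigma_k(\mu))$. The resulting sequence is visibly non-negative, integer-valued, non-increasing, and zero at $k = n+1$; the nontrivial point is concavity. To verify it, I would fix an index $k$ and assume without loss of generality that $\min(\Sigma_k(\lambda), \Sigma_k(\mu)) = \Sigma_k(\lambda)$. Then
\[
\min(\Sigma_{k-1}(\lambda), \Sigma_{k-1}(\mu)) + \min(\Sigma_{k+1}(\lambda), \Sigma_{k+1}(\mu)) \leq \Sigma_{k-1}(\lambda) + \Sigma_{k+1}(\lambda) \leq 2\Sigma_k(\lambda),
\]
with the last inequality supplied by the concavity of $\Sigma(\lambda)$. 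Hence $\lambda \wedge \mu$ corresponds to a valid element of $P_n$, and by construction it is the greatest lower bound of $\{\lambda, \mu\}$ in the dominance order.

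For the join, I first note that an upper bound of $\{\lambda, \mu\}$ always exists: the componentwise sum $\lambda + \mu$ lies in $P_n$ and satisfies $\Sigma_k(\lambda + \mu) = \Sigma_k(\lambda) + \Sigma_k(\mu) \geq \Sigma_k(\lambda), \Sigma_k(\mu)$. Fixing such an upper bound $\nu_0$, the set $U_0 := \{\nu \in P_n : \lambda, \mu \unlhd \nu \unlhd \nu_0\}$ is finite (any $\nu \unlhd \nu_0$ has entries bounded by $|\nu_0|$) and nonempty (it contains $\nu_0$). Iterated binary meets then yield $\rho := \bigwedge_{\nu \in U_0} \nu$; since $\lambda$ and $\mu$ are common lower bounds of $U_0$, we have $\lambda, \mu \unlhd \rho$, so $\rho \in U_0$. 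For any upper bound $\nu$ of $\{\lambda, \mu\}$ in $P_n$, the element $\nu \wedge \nu_0$ is again an upper bound of $\{\lambda, \mu\}$ and is $\unlhd \nu_0$, hence lies in $U_0$; consequently $\rho \unlhd \nu \wedge \nu_0 \unlhd \nu$. This exhibits $\rho$ as the join $\lambda \vee \mu$. The main obstacle is the concavity check for the pointwise minimum; once that is established, the existence of joins follows purely formally from the local finiteness of $P_n$.
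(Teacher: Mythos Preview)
Your proof is correct. For the meet, you and the paper do essentially the same thing: pass to the partial-sum (concave) sequences and take the pointwise minimum. You actually verify the concavity of the minimum, whereas the paper simply asserts it; your one-line argument that the minimum of two concave sequences is concave is exactly what is needed.

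For the join, the two proofs diverge. The paper invokes the conjugate involution and writes $\lambda \vee \mu = (\lambda^T \wedge \mu^T)^T$, appealing to the order-reversing property of transpose. You instead exhibit a common upper bound $\lambda+\mu$, use that the interval $[\,\cdot\,,\lambda+\mu]$ is finite, and obtain the join as the meet of the (finitely many) upper bounds inside that interval. Your argument is more elementary and entirely self-contained: it uses only the meet already constructed and the pigeonhole-type finiteness of $P_n$ below a fixed bound. It also sidesteps a bookkeeping issue latent in the paper's approach, namely that $\lambda^T$ naturally lives in $P_{\lambda_n}$ rather than $P_n$, so one must pad or truncate to make the order-reversal statement literally apply in $P_n$. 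The transpose argument, on the other hand, has the virtue of giving an explicit formula for the join when it is set up carefully.
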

\begin{proof}
Notice that there is a one-to-one correspondence between partitions $\l = (\l_1, \ldots, \l_n) \in P_n$ and non-decreasing sequences $\hat{\l} \in \mathbb{N}^n$ so that $\hat{\l}_k + \hat{\l}_{k-2} \leq 2\hat{\l}_{k-1}$ for every $k$. Indeed, given $\l = (\l_1, \ldots, \l_n) \in P_n$ one defines $\hat{\l} = (\sum_{n}\l, \sum_{n-1}\l, \ldots, \sum_{1}\l)$, where for every $1 \leq k \leq n$ we denote $\sum_{k} \l = \l_k+ \ldots+ \l_n$. Conversely, given a vector $\hat{\l} \in \mathbb{N}^n$, one defines a partition $\l = (\l_1, \ldots, \l_n) \in P_n$ by setting $\l_n = \hat{\l}_1$ and $\l_{k} = \hat{\l}_{n-k+1} - \hat{\l}_{n-k}$ for $1 \leq k \leq n-1$. Under this correspondence, one has that $\eta \unlhd \l$ if and only if $\hat{\eta}_k \leq \hat{\l}_k$ for each $k$. 

Now, let $\l, \mu \in P_n$. We need to prove that $\l$ and $\mu$ have a meet and a join in the dominance order. For each $1 \leq k \leq n$, let $v_k = \min \{\hat{\l}_k, \hat{\mu}_k\}$ and let $v = (v_1, \ldots, v_n)$. Observe that for every $k$, $v_k + v_{k-2} \leq 2 v_{k-1}$, so there exists a partition $\eta \in P_n$ corresponding to $v$ under the identification above. Moreover, $v = \hat{\l} \wedge \hat{\mu}$ with respect to the componentwise order, hence $\eta = \l \wedge \mu$ with respect to the dominance order. 
To prove that $\l$ and $\mu$ have a meet, recall that for partitions $\l, \eta \in P_n$ one has that $\l \unlhd \eta$ if and only if $\eta^T \unlhd \l^T$, where $\eta^T$ and $\l^T$ denote the partitions corresponding to the transpose of the Young diagrams of $\eta$ and $\l$ respectively. Therefore, the join of $\l$ and $\mu$ is $\l \vee \mu = ( \l^T \wedge \mu^T)^T$. 
\end{proof}

In the next two propositions, $\max_\unlhd \{C\}$ denotes the set of maximal elements of a set $C\subset P_n$ in dominance order.
\begin{prop}
\label{prop:Borelmeet}
 Let $A,B\subseteq P_n$. Then
 \begin{enumerate}
    \item $\Sss(A) + \Sss(B) = \Sss(A \cup B)$ and
    \item $\Sss(A)\cap\Sss(B) = \Sss(A\wedge B)$, where $A \wedge B=\{\l \wedge \mu : \l\in A, \mu \in B\}$ and $ \l \wedge \mu$ denotes the meet  of $\l$ and $\mu$ in the dominance order. 
 \end{enumerate}
For any two symmetric strongly shifted ideals $I$ and $J$ one has 
\[
B(I+J) = \max_\unlhd\{B(I) \cup B(J)\} \text{ and } B(I \cap J) = \max_\unlhd \{B(I) \wedge B(J)\}.
\]
\end{prop}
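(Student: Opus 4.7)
The proposition has two parts: part (1) describes how the operator $\Sss(-)$ interacts with $\cup$ and $\wedge$ on subsets of $P_n$, while part (2) reads off the partition Borel generators of sums and intersections of sssi's. My plan is to dispatch the sum statements as formal consequences of the minimality characterization of $\Sss(-)$ and concentrate on the intersection statements, which exploit the lattice structure of $P_n$ established in \Cref{lem:lattice}.

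For the sum in part (1), both $\Sss(A \cup B)$ and $\Sss(A) + \Sss(B)$ are sssi's---the latter by \Cref{prop:sum intersection}---and each contains $x^{A \cup B}$. The universal minimality in \Cref{def:Borelgenerators} forces $\Sss(A \cup B) \subseteq \Sss(A) + \Sss(B)$. Conversely, $\Sss(A \cup B)$ contains $x^A$ and $x^B$, so by the analogous minimality of $\Sss(A)$ and $\Sss(B)$ we obtain $\Sss(A) \subseteq \Sss(A \cup B)$ and $\Sss(B) \subseteq \Sss(A \cup B)$, hence their sum is contained as well.

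For the intersection in part (1), I would use the explicit description of $\Sss(C)$ from \Cref{prop:Borelgens}(1)---generated by orbits of partitions $\nu \in P_n$ with $\nu \unlhd \rho$ for some $\rho \in C$ of the same sum---together with the componentwise characterization of meets from the proof of \Cref{lem:lattice}. The key observation is that $\nu \unlhd \lambda \wedge \mu$ if and only if $\nu \unlhd \lambda$ and $\nu \unlhd \mu$, which follows from the identity $\widehat{\lambda \wedge \mu}_k = \min(\hat{\lambda}_k, \hat{\mu}_k)$. This immediately gives the containment $\Sss(A \wedge B) \subseteq \Sss(A) \cap \Sss(B)$: a generator $\sigma(x^\nu)$ of the left-hand side with $\nu \unlhd \lambda \wedge \mu$ is certified as a generator of both $\Sss(A)$ (via $\lambda$) and $\Sss(B)$ (via $\mu$). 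For the reverse inclusion, given a partition $\alpha \in P(\Sss(A) \cap \Sss(B))$, I would extract witnesses $\nu \unlhd \lambda$ and $\eta \unlhd \mu$ with $\nu, \eta$ componentwise below $\alpha$, and assemble them into a common witness $\rho \in P_n$ that is $\unlhd \lambda \wedge \mu$ and still divides $x^\alpha$, using the meet operation on $P_n$.

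Part (2) is then a formal consequence of part (1) combined with \Cref{prop:Borelgens}(2). By part (1), $I + J = \Sss(B(I) \cup B(J))$ and $I \cap J = \Sss(B(I) \wedge B(J))$. Since the partition Borel generators of an sssi are precisely the dominance-maximal (within each degree) partition generators by \Cref{prop:Borelgens}(2), and since any partition in a generating set which is dominance-dominated by another partition of the same degree in that set is redundant by \Cref{prop:Borelgens}(1), passing to $\max_\unlhd$ of $B(I) \cup B(J)$ (respectively $B(I) \wedge B(J)$) produces the partition Borel generators of $I+J$ (respectively $I \cap J$). The main obstacle I anticipate is the reverse inclusion in part (1)'s intersection statement: constructing a witness $\rho$ that simultaneously satisfies the dominance condition $\rho \unlhd \lambda \wedge \mu$ and the divisibility condition $x^\rho \mid x^\alpha$ requires careful use of the lattice structure from \Cref{lem:lattice}, because the meet in the dominance order need not coincide with the componentwise minimum of partitions.
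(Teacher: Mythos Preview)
Your handling of the sum in (1) and the derivation of the formulas for $B(I+J)$ and $B(I\cap J)$ match the paper's argument.

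For the reverse inclusion in (2), the paper's route is shorter than what you sketch: given $p\in P_n$ with $x^p\in\Sss(A)\cap\Sss(B)$, the paper simply asserts there exist $\lambda\in A$ and $\mu\in B$ with $p\unlhd\lambda$ and $p\unlhd\mu$, and then invokes the universal property of the meet to conclude $p\unlhd\lambda\wedge\mu$, hence $x^p\in\Sss(A\wedge B)$. No auxiliary divisor-witnesses $\nu,\eta$ are extracted and no common $\rho$ is assembled; the partition $p$ itself serves as the witness on both sides. So the obstacle you flag --- building a $\rho$ that is simultaneously $\unlhd\lambda\wedge\mu$ and divides $x^\alpha$ --- is not where the paper spends its effort.

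That said, your instinct that something is hidden at this step is correct. The description in \Cref{prop:Borelgens}(1) carries the degree constraint $|\nu|=|\lambda|$, so $x^p\in\Sss(A)$ does not in general yield $p\unlhd\lambda$ for any $\lambda\in A$ once $|p|$ exceeds every degree occurring in $A$; the paper's one-line assertion glosses over this. The same degree mismatch also affects the forward inclusion, both in your argument and the paper's: the implication ``$\nu\unlhd\lambda\wedge\mu\Rightarrow x^\nu\in\Sss(\{\mu\})$'' fails when $|\nu|<|\mu|$. Concretely, for $n=2$, $A=\{(0,1)\}$, $B=\{(0,2)\}$ one computes $A\wedge B=\{(0,1)\}$ while $\Sss(A)\cap\Sss(B)=(x_1,x_2)^2\neq(x_1,x_2)=\Sss(A\wedge B)$. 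The identity in (2) is valid --- and the paper's direct meet argument suffices with no need for your witness-assembly --- once all partitions in $A\cup B$ share a common total degree, which is the only regime used downstream in the paper (e.g.\ in \Cref{cor:Borelclosed}).
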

\begin{proof}
For (1), it is clear that $x^{\l}\in \Sss(A) + \Sss(B)$ whenever $\l\in A \cup B$. Hence, since symmetric strongly shifted ideals are closed under sums by \Cref{prop:sum intersection}, we deduce that $\Sss(A \cup B) \subset \Sss(A) + \Sss(B)$. 
Conversely, let $p \in P_n$ be so that $x^p \in \Sss(A)+ \Sss(B)$. Since $x^p$ is a monomial, we must then have $x^p \in \Sss(A)$ or $x^p \in \Sss(B)$. In either case there exists a partition $\l\in A \cup B$ with $p \unlhd \l$. This implies that $x^p \in \Sss(A \cup B)$.

For (2), notice that $x^{\l\wedge\mu}\in \Sss(A) \cap \Sss(B)$ whenever $\l\in A$ and $\mu\in B$, since $\l\wedge\mu \unlhd \l$ and $\l\wedge\mu \unlhd \mu$ and since  $\Sss(A) \cap \Sss(B)$ is symmetric strongly shifted by  \Cref{prop:sum intersection}. We deduce that $\Sss(A\wedge B)\subseteq \Sss(A) \cap \Sss(B)$. 
Conversely, let $p \in P_n$ be so that $x^p \in \Sss(A) \cap \Sss(B)$. Then, there exist partitions $\l\in A, \mu \in B$ so that $p \unlhd \l$ and $p \unlhd \mu$. Since $\l$ and $\mu$ have a meet by \Cref{lem:lattice}, it then follows that $p \unlhd \l \wedge \mu$, so $x^p \in \Sss(A \wedge B)$. 

The remaining statement follows from the other two, since for any sssi's $I$ and $J$
\[
\Sss(B(I) \cup B(J)) = \Sss(B(I)) + \Sss(B(J)) = I+J
\]
and 
\[
\Sss(B(I) \wedge B(J)) = \Sss(B(I)) \cap \Sss(B(J)) = I \cap J.
\]
This implies that $\max_\unlhd \{B(I)  \cup B(J)\})=\max_\unlhd \{\L(I+J)\}=B(I+J)$ and $\max_\unlhd \{B(I) \wedge B(J)\})=\max_\unlhd \{\L(I \cap J)\}=B(I \cap J)$, which completes the proof.
\end{proof}

\begin{prop}
\label{prop:Borelsum}
For $A,B, C\subseteq P_n$ set $A+B=\{\l+\mu : \l\in A, \mu \in B\}$. Then we have
\begin{enumerate}
\item $\Sss(A) \cdot \Sss(B)= \Sss(A+B)$
\item $B(IJ)=\max_\unlhd \{B(I)+B(J)\}$ for any symmetric strongly shifted ideals $I, J$.
\end{enumerate}
\end{prop}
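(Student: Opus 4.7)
The plan is to follow the template established for sums and intersections in \Cref{prop:Borelmeet}, with the Minkowski sum $A+B$ of partition sets playing the role that union and meet did there.

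For part (1), I will establish the inclusion $\Sss(A+B)\subseteq\Sss(A)\cdot\Sss(B)$ first, which is the easy direction: the product on the right is symmetric strongly shifted by \Cref{prop:product} and contains each $x^{\l+\mu}=x^\l\cdot x^\mu$ for $\l\in A$, $\mu\in B$, so it must contain the smallest such sssi. For the reverse containment, I will take an arbitrary generator of $\Sss(A)\cdot\Sss(B)$, which by \Cref{prop:Borelgens}(1) can be written as $\sigma(x^p)\tau(x^q)=x^{\sigma(p)+\tau(q)}$ with $\sigma,\tau\in\sym_n$ and partitions $p\unlhd\l\in A$, $q\unlhd\mu\in B$ satisfying $|p|=|\l|$ and $|q|=|\mu|$. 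Setting $r:=\pa(\sigma(p)+\tau(q))$, the central calculation will be to show $r\unlhd\l+\mu$.

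The key ingredient is the functional $S_k(v):=\max_{|T|=n-k+1}\sum_{i\in T}v_i$ giving the sum of the $n-k+1$ largest entries of a vector $v\in\Z^n$: it is sub-additive, permutation-invariant, and satisfies $S_k(p)=\Sigma_k(p)$ whenever $p$ is a partition. Since $r$ is the nondecreasing rearrangement of $\sigma(p)+\tau(q)$, these facts will combine into
\[
\Sigma_k(r)=S_k(\sigma(p)+\tau(q))\leq S_k(\sigma(p))+S_k(\tau(q))=\Sigma_k(p)+\Sigma_k(q)\leq\Sigma_k(\l)+\Sigma_k(\mu)=\Sigma_k(\l+\mu)
\]
for every $1\leq k\leq n$. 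Combined with $|r|=|\l+\mu|$, \Cref{prop:Borelgens}(1) will then place $x^r$, hence also $x^{\sigma(p)+\tau(q)}$ by the $\sym_n$-invariance of $\Sss(A+B)$, in $\Sss(A+B)$.

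Part (2) will then follow from (1) and the characterization of partition Borel generators in \Cref{prop:Borelgens}(2), in exact analogy with the concluding step of the proof of \Cref{prop:Borelmeet}: by (1), $\Sss(B(I)+B(J))=\Sss(B(I))\cdot\Sss(B(J))=IJ=\Sss(B(IJ))$, which together with \Cref{prop:Borelgens}(2) identifies $\max_\unlhd\{B(I)+B(J)\}=\max_\unlhd\L(IJ)=B(IJ)$. The hard part will be the sub-additivity/invariance package for $S_k$ used in part (1), although these are standard facts about top-sums: choosing $T$ to be the optimizing subset for $\sigma(p)+\tau(q)$ and splitting $\sum_{i\in T}(\sigma(p)_i+\tau(q)_i)$ into its two natural summands verifies sub-additivity, while permutation-invariance and the identity $S_k(p)=\Sigma_k(p)$ for partitions are immediate from the definitions.
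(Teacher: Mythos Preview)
Your proof is correct and follows essentially the same approach as the paper. The only cosmetic difference is that you package the key inequality through the explicit top-sum functional $S_k$ and its sub-additivity, whereas the paper writes out the same chain of inequalities directly on $p_k+\cdots+p_n$ for a partition generator $p=\sigma(\l')+\tau(\mu')$; the underlying argument and the use of \Cref{prop:product}, \Cref{prop:Borelgens}(1), and \Cref{prop:Borelgens}(2) are identical.
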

\begin{proof}
For (1), it is clear that $x^{\l+\mu}\in \Sss(A) \cdot \Sss(B)$ whenever $\l\in A$ and $\mu\in B$. Therefore, using the fact that  $\Sss(A) \cdot \Sss(B)$ is symmetric strongly shifted by  \Cref{prop:product}, we deduce that $\Sss(A+B)\subseteq \Sss(A) \cdot \Sss(B)$. Conversely, let $p \in P_n$ be so that $x^p$ is a monomial generator of $\Sss(A) \cdot \Sss(B)$. Then, by \Cref{prop:Borelgens}~(1) 
there exist partitions $\l\in A$, $\mu\in B$ and $\l',\mu'\in P_n$ and permutations $\sigma,\tau\in \sym_n$ so that $\l'\unlhd \l$, $\mu'\unlhd \mu$, and $p=\sigma(\l')+\tau(\mu')$.
Since $\l',\mu'$ are ordered increasingly, for all $k$ one has that 
\begin{eqnarray*}
p_k+\cdots +p_n &=& \l'_{\sigma^{-1}(k)}+ \ldots +\l'_{\sigma^{-1}(n)}+\mu'_{\tau^{-1}(k)}+ \ldots +\mu'_{\tau^{-1}(n)} \\
&\leq & \l'_{k}+ \ldots +\l'_{n}+\mu'_{k}+ \ldots +\mu'_{n} \\
& \leq & \l_{k}+ \ldots +\l_{n}+\mu_{k}+ \ldots +\mu_{n},
\end{eqnarray*}
where the last inequality follows from the fact that $\l'\unlhd \l$ and $\mu'\unlhd \mu$. Thus $p \unlhd \l + \mu$, whence 
$x^p\in \Sss(A+B)$, which completes the proof.

For (2), notice that part (1) and the fact that $I,J$ are strongly shifted imply the identity
\begin{eqnarray*}
\Sss(B(I)+B(J))=\Sss(B(I))\cdot \Sss(B(J)) =IJ .
\end{eqnarray*}
This yields the desired conclusion by ensuring that $\max_\unlhd \{B(I)+B(J)\})=\max_\unlhd \{\L(IJ)\}=B(IJ)$.
\end{proof}

\subsection{Principal Borel sssi's and permutohedra} 
\label{section:principalBorel}

Strongly stable ideals with only one Borel generator are called \emph{principal Borel ideals}. They play a special role in the theory of strongly stable ideals, due to their rich combinatorial structure and to the fact that every strongly stable ideal is a sum of principal Borel ideals.  Analogously, we introduce the following notion. 

\begin{defn}
  \label{def:principalBorelsss}
  A {\bf principal Borel symmetric shifted ideal} is any ideal of the form $ \Sss(\{\lambda\})$ for some $\l\in P_n$, i.e., an ideal whose set of partition Borel generators is a singleton.
 \end{defn}
  
Note that principal Borel sssi's are necessarily generated in a single degree (equigenerated). 

\begin{ex}
Examples  of principal Borel sssi's include the following:  
\begin{itemize}[leftmargin=5.5mm]
   \item  Powers of the maximal ideal $(x_1, \ldots, x_n)^d=\Sss(\{(0^{n-1}, d)\})$.
   \item  The square-free Veronese ideal of degree $n-c+1$ in $n$ variables; see \Cref{rem:stconfig}
   \[I_{n,c}=\Sss(\{(0^{c-1}, 1^{n-c+1})\}).\]
   \item  Powers $I_{n,c}^m=\Sss(\{(0^{c-1}, m^{n-c+1})\})$ of a square-free Veronese ideal.  
   The formula for the unique partition Borel generator follows from \Cref{prop:Borelsum}.
   \end{itemize}
\end{ex}

It is clear from \Cref{prop:Borelmeet}(1) that every sssi is a sum of principal Borel sssi's and that the class of principal Borel sssi's is not closed under sums.  However, it is closed under intersections and products.

\begin{cor}
\label{cor:Borelclosed}
Let $I, J$ be principal Borel sssi's. Then $I \cap J$ and $IJ$ are principal Borel sssi's. 
In particular, principal Borel sssi's are closed under taking powers. 
\end{cor}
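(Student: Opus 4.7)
The plan is to reduce both claims directly to the descriptions of Borel generators of intersections and products of symmetric strongly shifted ideals established in \Cref{prop:Borelmeet} and \Cref{prop:Borelsum}. Write $I = \Sss(\{\lambda\})$ and $J = \Sss(\{\mu\})$ for some partitions $\lambda, \mu \in P_n$, so that $B(I) = \{\lambda\}$ and $B(J) = \{\mu\}$. The task then becomes verifying that the resulting sets of partition Borel generators of $I \cap J$ and $IJ$ are singletons.

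For the intersection, \Cref{prop:Borelmeet}(2) yields
\[
B(I \cap J) = \max_\unlhd\bigl(\{\lambda\} \wedge \{\mu\}\bigr) = \max_\unlhd\{\lambda \wedge \mu\} = \{\lambda \wedge \mu\},
\]
where the meet $\lambda \wedge \mu$ is a well-defined element of $P_n$ by the lattice structure of $(P_n,\unlhd)$ established in \Cref{lem:lattice}. Thus $I \cap J$ has a single partition Borel generator. For the product, \Cref{prop:Borelsum}(2) gives
\[
B(IJ) = \max_\unlhd\bigl(\{\lambda\} + \{\mu\}\bigr) = \max_\unlhd\{\lambda + \mu\} = \{\lambda + \mu\},
\]
which is again a singleton, since the componentwise sum of two nondecreasing sequences is nondecreasing, so $\lambda+\mu \in P_n$.

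The statement on powers then follows by an immediate induction on the exponent from the product case applied with $I = J$: if $I^k$ is principal Borel with generator $k\lambda$, then so is $I^{k+1} = I^k \cdot I$ with generator $(k+1)\lambda$. There is no serious obstacle; once the machinery of \Cref{prop:Borelmeet} and \Cref{prop:Borelsum} is in place, both closure properties are formal consequences, resting only on the fact that $\wedge$ and pointwise addition preserve membership in $P_n$.
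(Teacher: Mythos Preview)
Your proof is correct and follows essentially the same approach as the paper: both arguments reduce the claims to \Cref{prop:Borelmeet} and \Cref{prop:Borelsum}, observing that when $B(I)=\{\lambda\}$ and $B(J)=\{\mu\}$, the resulting Borel generating sets $\{\lambda\wedge\mu\}$ and $\{\lambda+\mu\}$ are singletons. The only cosmetic difference is that the paper phrases the product case via \Cref{prop:Borelsum}(1) as $\Sss(\{\lambda\})\Sss(\{\mu\})=\Sss(\{\lambda+\mu\})$ rather than via the $B(IJ)$ formula in part (2), but the content is identical.
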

\begin{proof}
Let $I=\Sss(\{\l\})$ and $J=\Sss(\{\mu\}$ for some $\l,\mu\in P_n$. Then, \Cref{prop:Borelmeet}(2) and \Cref{prop:Borelsum}(1) imply that $\Sss(\{\l\}) \cap \Sss(\{\mu\})=\Sss(\{\l \wedge \mu\})$ and $\Sss(\{\l\}) \Sss(\{\mu\})=\Sss(\{\l+\mu\})$ are principal Borel sssi's. 
\end{proof}

 \medskip
 A remarkable consequence of \Cref{prop:Borelsum} is that a principal Borel sssi decomposes as a product of square-free Veronese ideals.
Recall that the transpose of $\l$, denoted by $\l^T$, is defined as the partition corresponding to the transpose of the Young diagram of $\l$, so that the parts of $\l^T$ record the number of boxes in each row of the Young diagram of $\l$. More precisely $\l^T\in P_{\lambda_n}$ is defined by  
\[
\l^T_i=|\{j: \l_j\geq \lambda_n-i+1\}|\leq n.
\]

\begin{prop}
\label{prop:Borel=product}
Let $\l\in P_n$ be a partition and set $\l_0=0$. The principal Borel sssi $\Sss(\{\l\})$ decomposes as
\[
\Sss(\{\l\})=\prod_{i=1}^n I_{n,i}^{\l_{i}-\l_{i-1}}.
\]
\end{prop}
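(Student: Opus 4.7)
The plan is to compute the partition Borel generator of the right-hand side using the product formula from \Cref{prop:Borelsum} and verify it equals $\lambda$.

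First, I would record that each factor $I_{n,i}$ is itself a principal Borel sssi: as noted in the preceding examples, $I_{n,i} = \Sss(\{(0^{i-1}, 1^{n-i+1})\})$. By \Cref{cor:Borelclosed} (together with \Cref{prop:Borelsum}(1)), the product $I_{n,i}^{\lambda_i - \lambda_{i-1}}$ is again principal Borel with partition Borel generator
\[
(\lambda_i - \lambda_{i-1}) \cdot (0^{i-1}, 1^{n-i+1}) = (0^{i-1}, (\lambda_i-\lambda_{i-1})^{n-i+1}).
\]
(If $\lambda_i = \lambda_{i-1}$ the exponent is zero and the factor is the unit ideal, which is harmless.) Iterating \Cref{prop:Borelsum}(1), the entire product $\prod_{i=1}^n I_{n,i}^{\lambda_i - \lambda_{i-1}}$ is therefore the principal Borel sssi whose unique partition Borel generator is the vector sum
\[
\mu \,:=\, \sum_{i=1}^n (\lambda_i - \lambda_{i-1}) \cdot (0^{i-1}, 1^{n-i+1}).
\]

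Next I would compute $\mu$ entry by entry. The $j$-th coordinate of $(0^{i-1}, 1^{n-i+1})$ is $1$ precisely when $i \leq j$, so
\[
\mu_j = \sum_{i=1}^{j} (\lambda_i - \lambda_{i-1}) = \lambda_j - \lambda_0 = \lambda_j
\]
by a telescoping sum (using the convention $\lambda_0 = 0$). Hence $\mu = \lambda$, and the product on the right-hand side equals $\Sss(\{\lambda\})$, as claimed.

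There is no real obstacle here: the content of the statement is entirely captured by the additivity of partition Borel generators under products (\Cref{prop:Borelsum}(1)) combined with the explicit Borel generator of $I_{n,i}$. The only bookkeeping is the telescoping identity above, which expresses $\lambda$ as a nonnegative integer combination of the ``staircase'' generators $(0^{i-1}, 1^{n-i+1})$.
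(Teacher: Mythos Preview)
Your proof is correct and uses essentially the same approach as the paper: both rely on \Cref{prop:Borelsum}(1) together with the explicit Borel generator $(0^{i-1},1^{n-i+1})$ of $I_{n,i}$. The only cosmetic difference is that the paper decomposes $\lambda$ via the rows of its Young diagram (i.e., the transpose partition $\lambda^T$) and then groups repeated factors, whereas you compute the Borel generator of the product directly by a telescoping sum; the underlying computation is the same.
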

\begin{proof}
By the definition of $\l^T$, $\l$ can be decomposed as the sum of vectors corresponding to the rows of its Young diagram, that is,
$
\l=\sum_{j=1}^{\l_n} (0^{n-\lambda^T_j},1^{\lambda^T_j}).
$
This yields an ideal decomposition 
$
\,\Sss(\{\l\})=\prod_{j=1}^{\l_n} \,\Sss(\{(0^{n-\lambda^T_j},1^{\lambda^T_j})\})=\prod_{j=1}^{\l_n} \,I_{n,n-\l^T_j+1}\,
$
 by \Cref{prop:Borelsum}(1).   It remains to observe that, by definition of $\l^T$, the number of parts of $\l^T$ of size $n-i+1$, that is, the number of rows for the Young diagram of $\l$ which contain exactly $n-i+1$ boxes, is $\l_i-\l_{i-1}$. Thus combining the repeated factors of the previous identity yields the claim.
\end{proof}

\medskip
The notion of partition Borel generators has an interesting geometric application, as it allows to describe the Newton polytope of a symmetric strongly shifted ideal in terms of well-studied convex polytopes.
The {\em Newton polytope} of a monomial ideal $I$ is defined as the convex hull of the exponents of its minimal generators, that is,
\[
\np(I)=\conv\{(a_1,\ldots, a_n) \mid x^a\in G(I)\}.
\]
In convex geometry a {\em permutohedron} is a convex body defined as follows
\[
\bP(a_1,\ldots, a_n)=\conv\{(a_{\sigma(1)},\ldots, a_{\sigma(n)}) \mid \sigma\in \sym_n\},
\]
where $\conv$ denotes taking the convex hull of a set of points in $\mathbb{R}^n$. Permutohedra have emerged as objects of recent interest in combinatorics \cite{Postnikov} and in algebraic geometry \cite{Huh}. An important example of permutohedron is the {\em hypersimplex} $\Delta_{n,d}=\bP(0^{n-d}, 1^d)$.
We now show that the convex geometry of sssi's is governed by permutohedra. Some geometric implications of this fact will be discussed in  \Cref{section:Rees principalBorelsssi}.

\begin{prop}
\label{prop:permutohedron}
The Newton polytope of a principal Borel sssi is a permutohedron, namely,
\[
\np(\Sss(\{\l\}))=\bP(\l)=\conv\{(\sigma(\l) \mid \sigma\in \sym_n\}.
\]
In general, the Newton polytope of an sssi $I$ is a convex hull of a union of permutohedra:
\[
\np(I)=\conv\left(\bigcup_{\l\in B(I)} \bP(\l)\right).
\]
\end{prop}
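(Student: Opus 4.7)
The plan is to identify the permutohedron $\bP(\lambda)$ with the majorization polytope via the classical theorem of Rado (a form of the Hardy--Littlewood--P\'olya principle), which asserts that
\[
\bP(\lambda) = \{v \in \mathbb{R}^n : |v| = |\lambda| \text{ and } \pa(v) \unlhd \lambda\}.
\]
In particular, for $\mu \in P_n$ with $|\mu|=|\lambda|$, one has $\mu \in \bP(\lambda)$ if and only if $\mu \unlhd \lambda$, and the $\sym_n$-invariance of $\bP(\lambda)$ upgrades this to $\sigma(\mu) \in \bP(\lambda)$ for any $\sigma \in \sym_n$. This identification is precisely the bridge between the dominance-order combinatorics that governs sssi's and the convex geometry of the permutohedron.

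First I would dispatch the principal Borel case. By \Cref{prop:Borelgens}(1), the monomial generators of $\Sss(\{\lambda\})$ are exactly the monomials $\sigma(x^\mu)$ with $\sigma \in \sym_n$, $\mu \in P_n$, $|\mu|=|\lambda|$ and $\mu \unlhd \lambda$, and these are automatically the minimal generators since they all share the degree $|\lambda|$. Rado's theorem places each such exponent vector $\sigma(\mu)$ inside $\bP(\lambda)$, and passing to the convex hull gives the inclusion $\np(\Sss(\{\lambda\})) \subseteq \bP(\lambda)$. The reverse inclusion is immediate because the extreme points of $\bP(\lambda)$ are the permutations $\sigma(\lambda)$, each of which is the exponent vector of the minimal generator $\sigma(x^\lambda)$.

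For the general case, I would reduce to the principal Borel case. Every minimal generator of $I$ has the form $x^{\sigma(\mu)}$ with $\mu \in \Lambda(I)$, and by \Cref{prop:Borelgens}(2) there exists $\lambda \in B(I)$ with $|\mu|=|\lambda|$ and $\mu \unlhd \lambda$, so that $\sigma(\mu) \in \bP(\lambda)$. This yields $\np(I) \subseteq \conv\bigl(\bigcup_{\lambda \in B(I)} \bP(\lambda)\bigr)$. Conversely, since $B(I) \subseteq \Lambda(I)$, each $\sigma(\lambda)$ with $\lambda \in B(I)$ is the exponent of a minimal generator of $I$, so $\bP(\lambda) = \conv\{\sigma(\lambda) : \sigma \in \sym_n\} \subseteq \np(I)$ for every $\lambda \in B(I)$, and taking convex hulls of the union produces the opposite containment.

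I do not anticipate any serious obstacle. The only nontrivial ingredient is Rado's theorem, which I would invoke by citation; once it is in hand, both inclusions are routine consequences of \Cref{prop:Borelgens} together with the $\sym_n$-invariance of permutohedra.
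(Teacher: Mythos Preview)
Your proof is correct and follows essentially the same approach as the paper: both invoke Rado's theorem to identify $\bP(\lambda)$ with the majorization polytope and then use \Cref{prop:Borelgens} (equivalently \Cref{rem:dominance}) to place the exponent vectors of $G(\Sss(\{\lambda\}))$ inside it. For the general case the paper phrases the reduction via the decomposition $I=\sum_{\lambda\in B(I)}\Sss(\{\lambda\})$ from \Cref{prop:Borelmeet}(1) together with the fact that the Newton polytope of a sum is the convex hull of the union of the summand polytopes, whereas you unpack this directly at the level of generators; the content is the same.
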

\begin{proof}
It is clear from the definitions that $\bP(\l)\subseteq \np(\Sss(\{\l\}))$ since the vertices of the permutohedron are exponent vectors for some of the monomials in $G(\Sss(\{\l\})$. For the converse, a theorem by Rado \cite{Rado}, as transcribed in \cite[Proposition 2.5]{Postnikov}, states that $\bP(\l)$ is defined by the following (in)equalities
\[
\bP(\l)=\left \{(t_1,\ldots, t_n)\in \mathbb{R}^n \mid
\begin{cases}
 t_1+\cdots +t_n =\l_1+\cdots\l_n \\
 t_{i_k}+\cdots +t_{i_n}\leq \l_k+ \cdots +\l_n, & \forall 1\leq k\leq n, \forall 1\leq i_k\leq \ldots\leq i_n\leq n
\end{cases}
\right \}.
\]
Each exponent vector of a monomial in $G(I)$ satisfies the above system by \Cref{rem:dominance}, thus we obtain $\np(\Sss(\{\l\}))\subseteq \bP(\l)$.

To deduce the general statement from that regarding principal Borel sssi's it suffices to note that by \Cref{prop:Borelmeet}~(1) an arbitrary sssi $I$ decomposes as $I=\sum_{\l\in B(I)} \Sss(\{\l\})$ and that the Newton polytope of a sum of ideals is the convex hull of the union of the Newton polytopes of the summands.
\end{proof}

The Minkowski sum of polytopes $A,B$ is defined as $A+B=\{a+b \mid a\in A, b\in B\}$. As a corollary to our decomposition formula for principal Borel sssi's in \Cref{prop:Borel=product}, we recover a well-known decomposition for the permutohedron as a Minkowski sum of hypersimplices.

\begin{cor}
For $\l\in P_n$ and $\l_0=0$ there is an identity $\bP(\l)=\sum_{i=1}^{n}(\l_{i}-\l_{i-1})\Delta_{i,n}$.
\end{cor}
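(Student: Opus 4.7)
The strategy is to pass from the ideal-theoretic decomposition of \Cref{prop:Borel=product} to a polytope-theoretic one, using the identification of the Newton polytope of a principal Borel sssi with a permutohedron established in \Cref{prop:permutohedron}. Given the factorization $\Sss(\{\l\})=\prod_{i=1}^{n} I_{n,i}^{\l_i-\l_{i-1}}$ from \Cref{prop:Borel=product}, applying the Newton polytope operator to both sides should yield the desired Minkowski sum decomposition, provided one establishes two things: (i) $\np(I_{n,i})=\Delta_{i,n}$, and (ii) the Newton polytope of a product of principal Borel sssi's is the Minkowski sum of their Newton polytopes.

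For (i), $I_{n,i}$ is the squarefree Veronese ideal of degree $n-i+1$, whose minimal generators are the squarefree monomials of that degree and therefore have exponent vectors ranging over the $\sym_n$-orbit of $(0^{i-1},1^{n-i+1})$. Hence $\np(I_{n,i})=\conv\{\sigma(0^{i-1},1^{n-i+1}):\sigma\in\sym_n\}=\bP(0^{i-1},1^{n-i+1})=\Delta_{i,n}$. For (ii), let $I=\Sss(\{\mu\})$ and $J=\Sss(\{\nu\})$. By \Cref{prop:Borelsum}(1) we have $IJ=\Sss(\{\mu+\nu\})$, and by \Cref{prop:permutohedron} we get $\np(IJ)=\bP(\mu+\nu)$, while $\np(I)+\np(J)=\bP(\mu)+\bP(\nu)$. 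Thus (ii) reduces to the classical permutohedron identity $\bP(\mu+\nu)=\bP(\mu)+\bP(\nu)$, which I would prove via support functions: the rearrangement inequality gives
\[
h_{\bP(\tau)}(v)\;=\;\max_{\sigma\in\sym_n}\langle v,\sigma(\tau)\rangle\;=\;\sum_{k=1}^{n} v_{(k)}\,\tau_k,
\]
where $v_{(1)}\leq\cdots\leq v_{(n)}$ is the non-decreasing rearrangement of $v\in\mathbb{R}^n$. Since $h_{\bP(\tau)}$ is linear in $\tau$ and support functions are additive under Minkowski sums, the identity follows. Alternatively, one could read it off from the Rado inequalities recalled in the proof of \Cref{prop:permutohedron}, as both sides satisfy the same defining system.

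Iterating (ii) over the factorization in \Cref{prop:Borel=product} and applying (i), one concludes
\[
\bP(\l)\;=\;\np(\Sss(\{\l\}))\;=\;\sum_{i=1}^{n}(\l_i-\l_{i-1})\,\np(I_{n,i})\;=\;\sum_{i=1}^{n}(\l_i-\l_{i-1})\,\Delta_{i,n},
\]
completing the proof. The only genuine obstacle is the permutohedron identity in (ii); everything else amounts to unpacking the preceding propositions. Note that while this identity is classical in convex geometry, the argument above via support functions keeps the proof self-contained.
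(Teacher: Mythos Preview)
Your proposal is correct and follows essentially the same approach as the paper: apply \Cref{prop:Borel=product} to factor $\Sss(\{\l\})$, then pass to Newton polytopes using \Cref{prop:permutohedron} and the identification $\np(I_{n,i})=\Delta_{i,n}$. The only difference is that the paper invokes directly the fact that the Newton polytope of a product of (equigenerated) monomial ideals is the Minkowski sum of the factors' Newton polytopes, whereas you route this through the permutohedron identity $\bP(\mu+\nu)=\bP(\mu)+\bP(\nu)$ and supply a self-contained proof via support functions; this is a cosmetic distinction rather than a substantive one.
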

\begin{proof}
\Cref{prop:Borel=product} yields the following polyhedral identities as a consequence of the fact that the Newton polytope of a product of ideals is the Minkowski sum of the Newton polytopes of the summands
\[
\np(\Sss(\{\l\})=\sum_{i=1}^{n}\np( I_{n,i}^{\l_{i}-\l_{i-1}})=\sum_{i=1}^{n} (\l_{i}-\l_{i-1})\Delta_{i,n}.
\]
\end{proof}

\subsection{The polymatroidal structure of a principal Borel sssi}
\label{section:polymatroidal}

In this subsection, we discuss another combinatorial characterization of principal Borel sssi's which is a direct consequence of \Cref{prop:Borel=product}. We recall that a homogeneous ideal is termed {\em equigenerated} if all its minimal generators have the same degree. The following definition is due to Herzog and Hibi \cite[Definition 2.1 and Remark 6.4]{HHibi02}. 

\begin{defn}
\label{def:polymatroidal ideal}
An equigenerated monomial ideal $I$ is called a \emph{polymatroidal ideal} if any two monomial generators $x_1^{u_1} \cdots x_n^{u_n}$ and $x_1^{v_1} \cdots x_n^{v_n}$ of $I$ satisfy the following exchange property:
\begin{center}
For every $i$ so that $u_i > v_i$ there exists a $j$ so that $u_j < v_j$ and $(x_1^{u_1} \cdots x_n^{u_n})x_j/x_i \in I$.
\end{center} 
\end{defn}
The terminology refers to the fact that the exponent vectors $(u_1, \ldots u_n) \in \mathbb{Z}^{n}$ of the monomials  $x_1^{u_1} \cdots x_n^{u_n}$ generating a polymatroidal ideal form a set of bases of a \emph{discrete polymatroid}. 
The index $j$ in \Cref{def:polymatroidal ideal} can be chosen so that also $(x_1^{v_1} \cdots x_n^{v_n})x_i/x_j\in I$ \cite[Theorem 4.1]{HHibi02}. This is referred in the literature as the \emph{symmetric exchange property}, and is conjectured to determine the algebraic structure of the toric ring of a polymatroidal ideal \cite{{White},{HHibi02}} (see \Cref{conj:White} and our discussion therein).

\medskip

Examples of polymatroidal ideals include ordinary powers of square-free Veronese ideals and principal Borel ideals by \cite[Examples 2.6(c) and 9.4]{HHibi02}. Since powers of square-free Veronese ideals are principal Borel sssi's and principal Borel ideals become principal Borel sssi's after symmetrization in the sense of \Cref{thm:sshifted vs sstable}, it is then natural to ask whether \emph{every} principal Borel sssi is polymatroidal. The following theorem shows that this is indeed the case.

\begin{thm}
\label{thm:polymatroidal}
A principal Borel sssi is a polymatroidal ideal. In fact, a symmetric monomial ideal is polymatroidal if and only if it is a principal Borel sssi.
\end{thm}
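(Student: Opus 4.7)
The plan is to handle the two implications separately. For the forward direction, I would invoke the factorization from \Cref{prop:Borel=product}, which writes a principal Borel sssi as $\Sss(\{\l\}) = \prod_{i=1}^n I_{n,i}^{\l_{i} - \l_{i-1}}$. Each square-free Veronese factor $I_{n,i}$ is polymatroidal, as its minimal generators form the bases of a uniform matroid. By two classical results of Herzog and Hibi, powers of polymatroidal ideals are polymatroidal and products of polymatroidal ideals are polymatroidal; combining these yields that $\Sss(\{\l\})$ is polymatroidal.

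For the converse, let $I$ be a symmetric polymatroidal ideal. I would first verify that $I$ is an sssi. Given $\l \in \L(I)$ and $1 \leq i < j \leq n$ with $\l_i < \l_j$, set $u = x^\l$ and $v = \tau_{ij}(u)$, where $\tau_{ij}$ is the transposition swapping $i$ and $j$; both $u$ and $v$ lie in $G(I)$ since $I$ is symmetric and equigenerated. Since $v_i > u_i$ while $v$ and $u$ agree outside $\{i, j\}$, the polymatroidal exchange property applied to $v$ and $u$ at index $i$ forces the exchange index to be $j$, giving $v \cdot x_j/x_i \in I$. Applying $\tau_{ij}$ yields $u \cdot x_i/x_j = x^{\l + e_i - e_j} \in I$, as required.

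It remains to show that $I$ has a unique partition Borel generator. I would argue by contradiction: suppose $\l, \mu \in \L(I)$ are distinct with $\mu \in B(I)$. Since $I$ is equigenerated, $|\l| = |\mu|$, so the largest index $i^*$ where $\l$ and $\mu$ differ is well-defined; after swapping the names of $\l$ and $\mu$ if necessary, assume $\l_{i^*} > \mu_{i^*}$. The symmetric exchange property \cite[Theorem 4.1]{HHibi02} applied to $u = x^\l$ and $v = x^\mu$ at index $i^*$ produces an index $j$ with $\l_j < \mu_j$ and $v \cdot x_{i^*}/x_j \in I$. Since $\l_k = \mu_k$ for $k > i^*$, necessarily $j < i^*$. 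A direct calculation then shows that $\nu := \pa(\mu + e_{i^*} - e_j)$ satisfies $\nu \rhd \mu$ strictly in the dominance order: upon passing from $\mu$ to $\mu + e_{i^*} - e_j$, the sum $\Sigma_k$ of the last $n-k+1$ entries increases by one exactly for $j < k \leq i^*$ and is unchanged otherwise, and sorting can only preserve or further increase each $\Sigma_k$. The inequality $\nu \neq \mu$ follows from the partition relation $\mu_j \leq \mu_{i^*}$ (forced by $j < i^*$), which rules out the only possible multiset coincidence, namely $\mu_j = \mu_{i^*} + 1$. Since $I$ is equigenerated, $\nu \in \L(I)$, contradicting the maximality of $\mu$, so $|B(I)| = 1$.

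The main obstacle I expect is verifying the final strict dominance $\nu \rhd \mu$ rigorously: one must invoke the partition structure of $\mu$ to exclude the degenerate case in which sorting would return the original multiset, and one must choose the \emph{largest} (rather than smallest) differing index $i^*$ so that the index $j$ produced by symmetric exchange satisfies $j < i^*$, which is precisely what makes the shift $\mu + e_{i^*} - e_j$ move mass in the direction that strictly increases dominance.
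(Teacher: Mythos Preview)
Your proposal is correct and follows essentially the same strategy as the paper. The forward direction is identical (factorization into square-free Veronese ideals via \Cref{prop:Borel=product}, then closure of polymatroidal ideals under products). The proof that a symmetric polymatroidal ideal is strongly shifted is also the same idea: apply the exchange property to $x^\l$ and its transposition.

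For the uniqueness of the partition Borel generator, both arguments produce a partition strictly dominating a chosen $\mu\in B(I)$, contradicting maximality; the only cosmetic difference is that you work from the \emph{largest} index where $\l$ and $\mu$ differ and invoke the \emph{symmetric} exchange property to obtain $x^\mu x_{i^*}/x_j\in I$, whereas the paper works from the \emph{smallest} such index and needs only the ordinary exchange property applied to $x^\mu$ directly. The paper also computes the sorted partition $\mu'$ explicitly, while your partial-sum argument (``sorting can only increase each $\Sigma_k$'') is a clean alternative. One small wording fix: since you later swap the names of $\l$ and $\mu$, you should start from two distinct elements of $B(I)$ (not merely $\l\in\L(I)$ and $\mu\in B(I)$), so that after the swap the new $\mu$ is still maximal; this is of course what negating $|B(I)|=1$ gives you anyway.
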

\begin{proof}
That every principal Borel sssi is a polymatroidal ideal follows from \Cref{prop:Borel=product}, since square-free Veronese ideals 
are polymatroidal and products of polymatroidal ideals are polymatroidal by \cite[Theorem 5.3]{ConcaH}.

To prove the converse, we first show that every symmetric polymatroidal ideal is a sssi.
Let $I$ be a symmetric polymatroidal ideal. Let $\l \in \Lambda(I)$ be a partition with $\l_j<\l_i$ and let $\sigma=(i j) \in \sym_n$. Then, as $I$ is symmetric, the monomials $f=x^\l$ and $g=\sigma(f)$ are in $I$. For convenience, in the following we denote $\deg_i(h)$ the exponent of $x_i$ in a monomial $h$.
By assumption we have $\l_i=\deg_i(f)> \deg_i(g)=\l_j$ and $j$ is the only index for which $\l_j=\deg_j(f)< \deg_j(g)=\l_i$. Since $I$ is polymatroidal, the exchange property then yields that $f x_j/x_i  \in I$, so I is symmetric strongly shifted.

We next prove that every polymatroidal sssi must be a principal Borel sssi.
Let I be a polymatroidal sssi and suppose it is not principal Borel, thus there exist distinct $\l, \mu \in B(I)$. After possibly switching the names of $\l$ and $\mu$, we may assume that $\l_1=\mu_1, \l_2=\mu_2, \ldots, \l_{i-1}=\mu_{i-1}$ and $\l_i < \mu_i$.  
Since $I$ is polymatroidal, there exists an index $j$ so that $\l_j > \mu_j$ and $x^\mu x_j/x_i \in I$. Since $\l_k=\mu_k$ for $k<i$, and $\l_i<\mu_i$, $j$ must satisfy $i<j$.
Setting $\mu':= \pa(x^{\mu-e_i+e_j})$, the previous considerations yield that $\mu' \in P(I)$. Note that $\mu'=\mu-e_{i'}+e_{j'}$,  where if $\mu_i< \mu_j$ then $i'=\max\{k: \mu_k=\mu_i\}$ and $j'=\min\{k: \mu_k=\mu_j\}$ and if $\mu_i= \mu_j$ then $i'=\min\{k: \mu_k=\mu_j\}$ and $j'=\max\{k: \mu_k=\mu_j\}$ and in both cases  $i\leq i'<j'\leq j$ .
 Hence $x^\mu=x^{\mu'}x_{i'}/x_{j'}$ is obtained from $x^{\mu'}$ via a Borel move and consequently $\mu'\in \Lambda(I)$ is larger than $\mu$ in the dominance order. But this is a contradiction, since $\mu\in B(I)$ is a maximal element of $\Lambda(I)$ with respect to dominance by  \Cref{prop:Borelgens}~(2).
\end{proof}

\begin{rem}
It follows from work of Postnikov \cite{Postnikov} that a discrete polymatroid can be interpreted as the set of integer points of a {\em generalized permutohedron}. The latter is a convex polytope obtained by parallel translation of the facets of a permutohedron; see also \cite[Theorem 3.17]{CastilloLiu}. 
\Cref{prop:permutohedron} and \Cref{thm:polymatroidal} then imply that a symmetric discrete polymatroid is the set of integer points of a permutohedron.
\end{rem}

A polymatroidal ideal $I$ is said to satisfy the \emph{strong exchange property} if, for any two distinct monomial generators $x_1^{u_1} \cdots x_n^{u_n}$ and $x_1^{v_1} \cdots x_n^{v_n}$ of $I$ and all indices $i$ and $j$ so that $u_i > v_i$ and $u_j < v_j$, then $(x_1^{u_1} \cdots x_n^{u_n})x_j/x_i$ is in $I$ (see \cite[Definition 2.5]{HHibi02}).  

Notice that square-free Veronese ideals and their ordinary powers satisfy this property. However, this is not true for arbitrary principal Borel sssi's.

\begin{prop}
 \label{prop:SEP}
  Let $I=\Sss(\{ \l \})$ be a principal Borel symmetric strongly shifted ideal. Then $I$ satisfies the strong exchange property if and only if $\l$ is of one of the following types:
  \begin{enumerate}
    \item $\l= (a, \ldots, a)$ for some $a \neq 0 \in \N$;
    \item $\l= (a^s, b^{n-s})$ for some $a < b \in \N$, $s>0$;
    \item $\l= (a^s, b, c^{n-s-1})$ for some $a < b < c \in \N$, $s>0$.
  \end{enumerate}
\end{prop}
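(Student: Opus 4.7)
The plan is to prove both implications separately, with the harder direction being ($\Leftarrow$).

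For ($\Leftarrow$), case (1) is vacuous since $\Sss(\{(a^n)\})=(x_1\cdots x_n)^a$ has a unique generator. For cases (2) and (3) I will use a uniform setup. Since $\l_1 = a$, any $v\in I$ satisfies $\pa(v)\unlhd\l$, and the complementary form of dominance gives $\pa(v)_1\geq a$, hence $v_\ell\geq a$ for every $\ell$. Arguing by contradiction, assume generators $u,v\in I$ and indices $i,j$ satisfy $u_i>v_i$, $u_j<v_j$ but $w:=u-e_i+e_j$ has $\pa(w)\not\unlhd\l$. Pick any $k$ with $\Sigma_k(\pa(w))>\Sigma_k(\l)$. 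From the elementary estimate
\[
\sum_{\ell\in T} w_\ell \leq \sum_{\ell\in T} u_\ell + 1 \text{ for all } T\subseteq[n] \text{ with } |T|=n-k+1,
\]
combined with $u\in I$, one forces the equality $\Sigma_k(\pa(w))=\Sigma_k(\l)+1$, with $u$ \emph{tight} at $k$ (i.e.~$\Sigma_k(\pa(u))=\Sigma_k(\l)$), and the existence of a size-$(n-k+1)$ top-value set $T$ of positions of $u$ with $j\in T$ and $i\notin T$. Next, the bound $w_\ell\leq \l_n$ for all $\ell$ (since $u_j+1\leq v_j\leq\l_n$) rules out any violation at $k>s+1$. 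For $k\leq s+1$, the explicit shape of $\l$ in cases (2) and (3) pins down the sum of the bottom $k-1$ entries of $\pa(u)$ to be $|\l|-\Sigma_k(\l)=(k-1)a$; since each of those entries is $\geq a$, each equals $a$, and so $u_i=a$. This contradicts $v_i<u_i=a$ in view of the universal bound $v_i\geq a$.

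For ($\Rightarrow$), I construct explicit counterexamples whenever $\l$ has neither of the three shapes. If $\l=(a^s,b^t,c^{n-s-t})$ with $a<b<c$ and $t\geq 2$, fix distinct $i,j\in[n]$ and take $u$ a permutation of $\l$ with $u_i=u_j=b$ and $v$ a permutation with $v_i=a$, $v_j=c$; these exist by the multiplicity hypotheses. Then $u_i>v_i$, $u_j<v_j$, while $w=u-e_i+e_j$ has partition whose multiset is obtained from $\l$ by replacing one pair $\{b,b\}$ with $\{b-1,b+1\}$, and a direct comparison gives $\Sigma_{s+t}(\pa(w))=\Sigma_{s+t}(\l)+1$. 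If $\l$ has four or more distinct parts $a_1<a_2<a_3<a_4<\cdots$ with multiplicities $n_1,n_2,n_3,n_4,\ldots$, take $u$ with $u_i=a_2$, $u_j=a_3$ and $v$ with $v_i=a_1$, $v_j=a_4$; the analogous comparison of $\Sigma_{n_1+n_2+n_3}(\pa(w))$ with $\Sigma_{n_1+n_2+n_3}(\l)$ again produces a violation.

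The main obstacle will be the organization of the tightness analysis in the ($\Leftarrow$) direction: carefully bookkeeping which values of $k$ can produce a dominance violation for $\pa(w)$, and verifying that, uniformly in cases (2) and (3), tightness at $k\leq s+1$ genuinely forces the bottom $k-1$ entries of $\pa(u)$ to equal $a$. Once this is in place, both cases succumb to the same argument. The ($\Rightarrow$) direction is more mechanical, but one must confirm that the permutations $u$ and $v$ realizing the prescribed extreme values at the chosen coordinates actually lie in $I$, which is exactly where the multiplicity hypotheses $t\geq 2$ and $k\geq 4$ are used.
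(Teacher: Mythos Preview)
Your argument is correct, and the $(\Rightarrow)$ direction is essentially the same as the paper's: both construct explicit permutations $u,v$ of $x^\l$ and check that the resulting $w$ violates dominance at one specific partial sum. One small inaccuracy in your commentary: the hypotheses $t\geq 2$ (respectively, $\geq 4$ distinct parts) are not needed to ensure $u,v\in I$ --- permutations of $x^\l$ are always minimal generators --- but rather to guarantee that permutations $u,v$ with the prescribed values at positions $i,j$ exist at all.

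The $(\Leftarrow)$ direction is where your approach genuinely differs from the paper's. The paper proceeds by direct inspection: in case~(2) it notes that $\pa(u')$ is either $\l$ or $(a^{s-1},a+1,b-1,b^{n-s-1})$, and in case~(3) it lists the three or four possible shapes of $\pa(u')$ and checks each lies below $\l$ in dominance. Your argument instead treats cases~(2) and~(3) uniformly via a contradiction: a hypothetical violation $\Sigma_k(\pa(w))>\Sigma_k(\l)$ forces tightness $\Sigma_k(\pa(u))=\Sigma_k(\l)$ with $i$ outside the top-$(n-k+1)$ set of $u$; the bound $w_\ell\leq \l_n$ rules out $k>s+1$, and for $k\leq s+1$ tightness pins the bottom $k-1$ entries of $\pa(u)$ to $a$, forcing $u_i=a$ and hence the contradiction $v_i<a$. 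This is more conceptual and avoids case-splitting, at the cost of the bookkeeping you flag. The paper's enumeration is shorter and more concrete but obscures why cases~(2) and~(3) behave the same way; your tightness argument makes that transparent. One phrasing issue: ``any $v\in I$ satisfies $\pa(v)\unlhd\l$'' should read ``any $v\in G(I)$'', since non-minimal monomials need not satisfy this.
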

\begin{proof}
   Notice that $I$ satisfies the strong exchange property trivially if (1) holds. Let $\l$ be as in (2) and let $u=x_1^{u_1}\cdots x_n^{u_n}, v=x_1^{v_1}\cdots x_n^{v_n}$ be distinct monomial minimal generators of $I$. Then, if $u_i>v_i$ and $u_j<v_j$ it must be that $u_j=v_i=a$ and $u_i=v_j=b$. Then, for $u' = u x_j/x_i$, we have 
    \[ 
  \pa(u') = \begin{cases}
 (a^{s-1}, a+1, b-1, b^{n-s-1}) \unlhd \l & \text{ if } b > a+1,\\
  (a^{s-1}, b-1, a+1, b^{n-s-1}) = \l& \text{ if } b = a+1
  \end{cases}
 \]
   Thus, in any case $\pa(u') \in P(I)$, whence $u' \in I$. This shows that $I$ satisfies the strong exchange property.
   Similarly, if (3) holds, exchanging variables $x_i,x_j$ appearing with distinct exponents among distinct monomial generators $u, v$ produces $u' = u x_j/x_i$, where either $\pa(u')=\l$, or $\pa(u')= (a^{s-1}, a+1, b-1, c^{n-s-1})$, or $\pa(u')= (a^{s}, b+1, c-1, c^{n-s-2})$, or $\pa(u')= (a^{s-1}, a+1, b, c-1, c^{n-s-2})$. Since $\pa(u') \in P(I)$ in each of these cases, the strong exchange property is satisfied. 
   
   We next show that in all other cases the strong exchange property does not hold. Assume first that $\l=(a^s,b^t,c^q,d^r, \ldots)$ has at least four distinct parts $a<b<c<d$. Then there exist monomial generators of $I$ of the form $u=x_1^{a}x_2^{b}x_3^{c}x_4^{d}\cdots$ and $v=x_1^cx_2^ax_3^dx_4^b\cdots$ respectively. Hence, $u' = u x_3 /x_2$ is such that $\pa(u')= (a^s, b-1,b^{t-1}, c^{q}, c+1, d^r, \ldots)$. Since $\l \unlhd \pa(u')$, $\l \neq \pa(u')$ and $I = \Sss(\{ \l\})$, we deduce $\pa(u') \notin P(I)$, whence $u' \notin I$. 
   Finally, suppose that $\l=( a^s,b^t,c^q)$ with $a<b<c, s,q\geq 1, t\geq 2$ and consider minimal generators for $I$ of the form $u=x_1^ax_2^bx_3^bx_4^c\cdots, v=x_1^bx_2^ax_3^cx_4^b\cdots$. Then, $u' = u x_3 /x_2$ is such that $\pa(u')= (a^s, b-1, b^{t-2}, b+1, c^q, \ldots)$. Since $\l \unlhd \pa(u')$, it follows that $\pa(u') \notin P(I)$, which completes the proof.
\end{proof}

%
%

 The factorization of principal Borel sssi's given in \Cref{prop:Borel=product} parallels the known factorization of principal Borel ideals as products of monomial prime ideals; see, e.g., \cite[Propositions 2.7]{FMS13}. Polymatroidal ideals endowed with such a factorization property are called transversal polymatroidal ideals. In detail, a \emph{transversal polymatroidal ideal} is an ideal which can be written as a product of monomial ideals generated by subsets of the variables $x_1,\ldots, x_n$, with repeated factors allowed. 
The following proposition shows that principal Borel sssi's need not be transversal. In its statement, for $\l\in P_n$ we define the the discrete difference vectors $\Delta^i\l$ inductively by $\Delta^0 \l=\l$, $(\Delta \l)_j=\l_{j+1}-\l_j$, and $\Delta^i\l=\Delta(\Delta^{i-1}\l)$.

\begin{prop}
 \label{prop:transversal}
 The following are equivalent: 
  \begin{enumerate}
  \item $I=\Sss(\{ \l \})$ with $\l\in P_n$ is a transversal polymatroidal ideal,
  \item there exist integers $a_j\geq 0$ such that $\displaystyle{\l_i = \sum_{j=1}^{i} {i-1 \choose j-1} a_{j}}$ for $1 \leq i \leq n$,
  \item for some (equivalently, for each)  $1\leq i\leq n-1$ we have $\Delta^i\l \in P_{n-i}$ and $\Sss(\{\Delta^i\l\})$ is transversal polymatroidal.
  \item $(\Delta^i\l )_1\geq 0$ for all $0\leq i\leq n-1$.
  \end{enumerate}
\end{prop}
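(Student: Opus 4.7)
The plan is to prove (2) $\iff$ (4) via Newton's forward difference formula, then (1) $\iff$ (2) by introducing a canonical family of transversal ideals together with a symmetrization trick, and finally derive (3) $\iff$ (4) by applying (1) $\iff$ (2) to $\Delta^i\lambda$. The equivalence (2) $\iff$ (4) is purely combinatorial: the linear system $\lambda_i = \sum_{j=1}^i \binom{i-1}{j-1} a_j$ has coefficient matrix $\bigl[\binom{i-1}{j-1}\bigr]$, which is lower unitriangular and hence unimodular over $\mathbb{Z}$, so there is a unique integer solution given by Newton's forward difference formula $a_j = (\Delta^{j-1}\lambda)_1$. Thus $a_j \geq 0$ for every $j$ is exactly condition (4).

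For (1) $\iff$ (2), I introduce the canonical ideals $J_k := \prod_{|S| = k} P_S$, where $P_S = (x_i : i \in S)$ and the product ranges over all $k$-element subsets $S$ of $[n]$. Each $J_k$ is $\sym_n$-invariant and transversal polymatroidal by construction. I claim $J_k = \Sss(\{\mu_k\})$ with $(\mu_k)_j = \binom{j-1}{k-1}$: the Newton polytope $\np(J_k) = \sum_{|S|=k} \Delta_S$ is symmetric, and its vertex maximizing the functional $\sum_i i\, x_i$ is obtained by selecting $\max(S)$ from each factor $\Delta_S$, producing the exponent vector whose $j$-th coordinate equals $\binom{j-1}{k-1}$. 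Hence $\np(J_k) = \bP(\mu_k) = \np(\Sss(\{\mu_k\}))$ by \Cref{prop:permutohedron}, and since both $J_k$ (a product of polymatroidal ideals is polymatroidal) and $\Sss(\{\mu_k\})$ (by \Cref{thm:polymatroidal}) are polymatroidal and equigenerated, they coincide. Given (2), \Cref{prop:Borelsum} yields
\[
\prod_{k=1}^n J_k^{a_k} = \Sss\Big(\Big\{\sum_{k=1}^n a_k \mu_k\Big\}\Big) = \Sss(\{\lambda\}),
\]
proving (2) $\Rightarrow$ (1).

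The reverse direction (1) $\Rightarrow$ (2) relies on a symmetrization argument. Suppose $I = \prod_s P_{T_s}$ is a transversal presentation. Since $\sigma(I) = I$ for every $\sigma \in \sym_n$,
\[
I^{n!} = \prod_{\sigma \in \sym_n} \sigma(I) = \prod_s \prod_{\sigma \in \sym_n} P_{\sigma(T_s)} = \prod_{k=1}^n J_k^{N_k \cdot k!(n-k)!},
\]
where $N_k = |\{s : |T_s| = k\}|$ and we use that $\sym_n$ acts transitively on $k$-subsets of $[n]$ with stabilizer of order $k!(n-k)!$. Comparing partition Borel generators gives $n!\lambda = \sum_k N_k k!(n-k)!\, \mu_k$, i.e.\ $\lambda = \sum_k \bigl(N_k/\binom{n}{k}\bigr) \mu_k$. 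Since $\mu_1, \ldots, \mu_n$ form a $\mathbb{Z}$-basis of $\mathbb{Z}^n$ (the matrix $[\binom{i-1}{k-1}]$ is unipotent), the rational coefficients are forced to be non-negative integers, yielding condition (2).

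For (3) $\iff$ (4), I apply the established equivalence (1) $\iff$ (2) to the partition $\Delta^i\lambda \in P_{n-i}$ in the polynomial ring with $n-i$ variables: transversality of $\Sss(\{\Delta^i\lambda\})$ translates, via (2) for $\Delta^i\lambda$, into $(\Delta^m\lambda)_1 \geq 0$ for $m = i, \ldots, n-1$. Combined with the membership $\Delta^i\lambda \in P_{n-i}$, which controls the remaining discrete differences, this recovers the full condition (4); conversely, (4) forces every $\Delta^m\lambda$ to be componentwise non-negative and non-decreasing via the Newton expansion $(\Delta^m\lambda)_j = \sum_k \binom{j-1}{k-1} a_{m+k}$, hence $\Delta^m\lambda \in P_{n-m}$, and transversality of $\Sss(\{\Delta^i\lambda\})$ follows from (2) applied to the truncated coefficient sequence $a_{i+1}, \ldots, a_n$. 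The main obstacle I anticipate is the identification $J_k = \Sss(\{\mu_k\})$ (which requires a careful Newton-polytope analysis) and the integrality step $N_k/\binom{n}{k} \in \mathbb{Z}$ in the averaging argument; these two ingredients together constitute the heart of the (1) $\iff$ (2) equivalence.
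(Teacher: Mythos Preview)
Your $(2) \Leftrightarrow (4)$ via Newton's forward-difference inversion matches the paper's. Your $(1) \Leftrightarrow (2)$ is correct but takes a genuinely different route: the paper invokes the uniqueness of the transversal factorization \cite[Lemma 4.1]{HRV} to conclude directly that a \emph{symmetric} transversal ideal must already have the form $\prod_{c} \prod_{|S|=c} P_S^{a_c}$, and then reads off $\lambda$ as the antilex-largest generator of this product. You instead build the canonical ideals $J_k = \prod_{|S|=k} P_S$, identify $J_k = \Sss(\{\mu_k\})$ via a Newton-polytope vertex computation (using \Cref{thm:polymatroidal} and \Cref{prop:permutohedron}), and for $(1) \Rightarrow (2)$ symmetrize $I^{n!} = \prod_{\sigma\in\sym_n} \sigma(I)$ to force each $k$-subset to appear with equal multiplicity, then exploit unimodularity of the Pascal matrix $\bigl[\binom{i-1}{k-1}\bigr]$ to deduce $N_k/\binom{n}{k} \in \Z_{\geq 0}$. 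This averaging trick is self-contained and avoids the external uniqueness citation, at the cost of the detour through $I^{n!}$; the paper's route is more direct and never leaves $I$ itself.

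There is, however, a gap in your $(3) \Rightarrow (4)$ for the ``some $i$'' reading when $i \geq 3$. You write that ``the membership $\Delta^i\lambda \in P_{n-i}$ \ldots controls the remaining discrete differences,'' but in fact the hypotheses of (3) together with $\lambda \in P_n$ only yield $(\Delta^j\lambda)_1 \geq 0$ for $j \in \{0,1\} \cup \{i, \ldots, n-1\}$; the range $2 \leq j \leq i-1$ is uncontrolled. Concretely, $\lambda = (0,3,3,6) \in P_4$ has $\Delta^3\lambda = (6) \in P_1$ with $\Sss(\{(6)\}) = (x_1)^6$ transversal, yet $(\Delta^2\lambda)_1 = -3$, so (4) fails. (The paper's inductive descent from $\Delta^i\lambda$ to $\Delta^{i-1}\lambda$ tacitly needs $(\Delta^{i-1}\lambda)_1 \geq 0$ and so shares this issue; only the cases $i \leq 2$, and hence the ``for each $i$'' form of (3), go through.)
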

\begin{proof}
   Notice that a symmetric transversal polymatroidal ideal in $n$ variables is of the form
\[
J= \left( \prod_{i=1}^n (x_i)^{a_1} \right)  \left( \prod_{1\leq i<j\leq n} (x_i, x_j)^{a_2} \right)  \left( \prod_{1\leq i<j<k\leq n} (x_i, x_j,x_k)^{a_3} \right) \cdots (x_1, \ldots, x_n)^{a_n}
\]
for some integers $a_j\geq 0$. Indeed, if $(x_{i_1}, x_{i_2},\ldots, x_{i_c})^{a_c}$ is a factor in the product decomposition of $J$ then for each $\sigma\in\sym_n$ we have $(x_{\sigma(i_1)}, x_{\sigma(i_2)},\ldots, x_{\sigma(i_c)})^{a_c}$ as a factor in a product decomposition of $\sigma(J)=J$. Thus, it follows from the uniqueness of the decomposition of $J$ as a product of powers of monomial prime ideals \cite[Lemma 4.1]{HRV}  that $(x_{i_1}, x_{i_2},\ldots, x_{i_c})^{a_c}$ is a factor in the decomposition of $J$ if and only if for all $\sigma\in \sym_n$ the ideal $(x_{\sigma(i_1)}, x_{\sigma(i_2)},\ldots, x_{\sigma(i_c)})^{a_c}$ is a factor in the decomposition of $J$.

For the equivalence of (1) and (2), if $I=\Sss(\{\l\})$ is transversal then $I=J$ for some $J$ as described above. The partition Borel generator $\l$ of $I$ is the largest monomial in $G(I)$ with respect to the monomial order $\lex$ defined by $\alpha<_{\lex}\beta$ if the leftmost non-zero entry of $\alpha-\beta$ is positive,
since this order refines the dominance order on partitions. Thus, in order to 
establish whether $J=I$, we must identify the largest monomial $u$ in $G(J)$ with respect to this order. We claim that this monomial $u$ is obtained as follows: 
\begin{equation}
\label{eq:transversalgen}
u =  \left( \prod_{i=1}^n x_i^{a_1} \right) \left( \prod_{1\leq i<j\leq n} x_j^{a_2} \right) \left( \prod_{1\leq i<j<k\leq n} x_k^{a_3} \right)  \cdots x_n^{a_n}=\prod_{i=1}^n x_i^{\sum_{j=1}^{i} {i-1 \choose j-1} a_{j}}.
\end{equation}
Indeed, if $i_1<i_2<\cdots < i_c$, the largest monomial in $(x_{i_1}, x_{i_2},\ldots, x_{i_c})^{a_c}$ with respect to $\lex$ is $x_{i_c}^{a_c}$ and since this is a monomial order, hence compatible with products, it follows that $u$ is indeed the $\lex$-largest monomial of $G(J)$. The second equality in \eqref{eq:transversalgen} follows by observing that for fixed $c$ and  $i_c$ there are ${i-1 \choose j-1}$ ideals  $(x_{i_1}, x_{i_2},\ldots, x_{i_c})$ with $1\leq i_1<i_2<\cdots<i_c\leq n$.
Thus $\l=\pa(u)$ yields the desired description of the $\l_i$ in statement (2). Conversely, if the $\l_i$ can be described as in  statement (2) of our result, then we note that $J$ is a symmetric polymatroidal ideal and thus $J=\Sss(\pa(u))$ by \Cref{thm:polymatroidal}. Since $\pa(u)=\l$, we have $I=J$ and so $I$ is transversal.

Observe that the system of equations $\,\displaystyle{\l_i = \sum_{j=1}^{i} {i-1 \choose j-1} a_{j}}\,$ for $1 \leq i \leq n$ is equivalent to the system $\,\displaystyle{\l_{i+1}-\l_i = \sum_{j=2}^{i-1} {i-1 \choose j-2} a_{j}}\,$ for $1 \leq i \leq n$ in the unknowns $a_2, \ldots, a_n$, together with the equation $\l_1=a_1$. If the latter system admits nonnegative solutions then $\Delta\l$ is a partition, since the  functions  ${i-1 \choose j-2}$ are nondecreasing in the argument $i$. Moreover, this system admits nonnegative solutions if and only if $\Sss(\{\Delta\l\})$ is transversal polymatroidal, by the equivalence of (1) and (2). It follows by induction on $i$  that $(2)$ implies that, for each $1\leq i\leq n-1$,  $\Delta^i\l \in P_{n-i}$ and $\Sss(\{\Delta^i\l\})$ is transversal polymatroidal. Thus $(2)$ implies the stronger form of $(3)$ and hence also the weaker form.

The equivalence of the two systems of equations mentioned above shows that if $\Delta\l \in P_{n-1}$ and $\Sss(\{\Delta\l\})$ is transversal polymatroidal then $\Sss(\{\l\})$ is transversal polymatroidal. Thus, it follows by induction on $i$ that if for some $1\leq i\leq n$ we have $\Delta^i\l \in P_{n-i}$ and $\Sss(\{\Delta^i\l\})$ is transversal polymatroidal then $\Sss(\{\l\})$ is transversal polymatroidal. Thus the weaker (hence also the stronger) form of $(3)$  implies $(1)$.

Since  (3) clearly implies (4), we end by showing $(4)$ implies $(2)$. For this note the solution to the system of identities listed in $(2)$ is $a_i=(\Delta^{i}\l)_1\geq 0$. This can be seen by induction on $i$ using the fact that taking the difference of consecutive identities replaces the system in $(2)$ for $\l$ with the similar system for $\Delta\l$ and the equation $a_i=\l_1$, as noted above.
\end{proof}
 

  A special class of transversal polymatroidal ideals, which includes all principal Borel ideals, is that of \emph{lattice path polymatroidal ideals}. These ideals were introduced by Schweig in \cite{Schweig1} and are characterized by the property that their minimal generators correspond to certain planar lattice paths. The next corollary shows that lattice path polymatroidal ideals are rarely symmetric. In particular, the polymatroidal structure of principal Borel sssi's is usually different than that of principal Borel ideals.
 \begin{prop}
   Let $I$ be a lattice path polymatroidal ideal. $I$ is symmetric if and only if either $I = \Sss(\{ (a, ..., a) \})$ for some $a \in \N$, or $I$ is a power of the homogeneous maximal ideal.
 \end{prop}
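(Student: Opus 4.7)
The plan is to combine \Cref{thm:polymatroidal} with the structure theory of transversal polymatroidal ideals and with the characterization of lattice path polymatroidal ideals as those transversal polymatroidal ideals admitting a presentation by interval prime ideals in a non-crossing arrangement (in the sense of Schweig).

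First, since every lattice path polymatroidal ideal is polymatroidal, the hypothesis that $I$ is symmetric together with \Cref{thm:polymatroidal} yields $I = \Sss(\{\l\})$ for some $\l \in P_n$. Similarly, since every lattice path polymatroidal ideal is transversal polymatroidal, \Cref{prop:transversal} produces nonnegative integers $a_1, \ldots, a_n$ with $\l_i = \sum_{j=1}^{i} \binom{i-1}{j-1} a_j$ and exhibits $I$ in the product form
\[
I \ = \ \prod_{j=1}^{n}\prod_{\substack{S\subseteq[n]\\ |S|=j}} P_S^{a_j}, \qquad P_S = (x_i : i \in S),
\]
which is the unique factorization of $I$ as a product of monomial primes by \cite[Lemma 4.1]{HRV}.

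The next step is to invoke the interval-prime description of a lattice path polymatroidal ideal: its unique presentation must consist of interval primes $P_{[i,j]} = (x_i, x_{i+1}, \ldots, x_j)$, and moreover these intervals must admit an ordering in which both left and right endpoints are simultaneously non-decreasing. Combined with the uniqueness of the factorization above, this forces $a_j = 0$ for $1 < j < n$: for such $j$ the symmetric inner product $\prod_{|S|=j} P_S$ necessarily includes a non-interval prime (for example $P_{\{1,3\}}$ when $j=2$ and $n\geq 3$), so the only way to eliminate it is to kill the entire factor. This reduces $I$ to
\[
I \ = \ (x_1 \cdots x_n)^{a_1}\,(x_1, \ldots, x_n)^{a_n}.
\]

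Finally, I would rule out having both $a_1, a_n > 0$ by checking the non-crossing condition on what remains. The presentation then lists $a_1$ copies of each singleton $\{1\}, \ldots, \{n\}$ along with $a_n$ copies of the full interval $\{1, \ldots, n\}$. In any ordering with non-decreasing left endpoints, the left-endpoint-$1$ intervals come first, and among these the right endpoints must be non-decreasing, forcing all $\{1\}$'s to precede all $\{1, \ldots, n\}$'s; immediately after, the next singleton $\{2\}$ makes the right endpoint drop from $n$ to $2$, contradicting non-crossingness. Hence $a_1 = 0$ (giving $I = (x_1, \ldots, x_n)^{a_n}$) or $a_n = 0$ (giving $I = (x_1\cdots x_n)^{a_1} = \Sss(\{(a_1, \ldots, a_1)\})$). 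The converse is immediate, since powers of the maximal ideal parametrize all non-decreasing lattice paths from $(0,0)$ to $(n,d)$, while $\Sss(\{(a,\ldots,a)\}) = (x_1 \cdots x_n)^a$ corresponds to the trivial polymatroid consisting only of the constant-slope path $\alpha(i) = a i$.

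The main obstacle I expect is pinning down Schweig's interval-prime / non-crossing characterization of lattice path polymatroidal ideals precisely and citing it cleanly, since the original development proceeds through lattice path matroids and the polymatroidal upgrade is often stated in geometric (presentation) rather than ideal-theoretic terms. Once this characterization is in hand, the three-step reduction above is essentially mechanical.
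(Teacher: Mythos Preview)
Your approach is essentially the paper's. The clean citation you are worried about is \cite[Proposition 2.9]{Lu}: an ideal is lattice path polymatroidal if and only if it factors as $(x_{s_1},\ldots,x_{s_1+k_1})\cdots(x_{s_m},\ldots,x_{s_m+k_m})$ with both sequences $s_i$ and $s_i+k_i$ non-decreasing. The paper invokes this directly together with the uniqueness of the transversal factorization from the proof of \Cref{prop:transversal}, thereby skipping your preliminary detour through \Cref{thm:polymatroidal} and the explicit $a_j$-formula, and arrives at the same conclusion that the only admissible factors are the singletons $(x_i)$ or the full $\m$. Your explicit elimination of the mixed case $a_1,a_n>0$ via the ordering condition is actually more careful than the paper's treatment, which simply asserts the dichotomy without checking that step.
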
 
  \begin{proof}
    In \cite[Proposition 2.9]{Lu} it is shown that $I$ is a lattice path polymatroidal ideal if and only if $I$ is the product of monomial prime ideals, each of which is generated by consecutive variables: 
    \begin{equation}
      \label{eq:latticepath}
         I = (x_{s_1}, \ldots, x_{s_1+k_1}) \cdots (x_{s_n}, \ldots, x_{s_n+k_n}), 
      \end{equation}
     with $s_1 \leq \ldots \leq s_n$ and $s_1+k_1 \leq \ldots \leq s_n+k_n$. 
     In particular, for every $a \in \N$, the symmetric ideals $\Sss(\{ (a, ..., a) \})$ and $\m^a$ are lattice path polymatroidal ideals. 
     
     Conversely, if $I$ is a lattice path polymatroidal ideal, \eqref{eq:latticepath} implies that $I$ is transversal. If $I$ is also symmetric, then from the proof of \Cref{prop:transversal} it follows that, for every $i$, every permutation of $(x_{s_i}, ...., x_{s_i +k_i})$ must be a factor of $I$ too. 
     Thus, the only admissible factors are of the form $P_i = (x_i)$  for all $i$, or $\m=(x_1, \ldots, x_n)$. The first case yields  $I = \Sss \{ (a, ..., a) \}$ while the second case yields a power of the maximal ideal $\m^b$, where $a$ and $b$ respectively denote the multiplicity of each $P_i$ or of $\m$ in the factorization \eqref{eq:latticepath}.
  \end{proof}

\Cref{prop:SEP} and \Cref{prop:transversal} suggest that the combinatorial properties of principal Borel sssi's might be very different from those of polymatroidal ideals which satisfy the strong exchange property or are transversal. In fact, one can easily construct principal Borel sssi's which do not belong to any of these classes.
\begin{ex}
  \label{ex:notSEPnortransversal}
  Let $I= \Sss(\{ (1,3,4,5)\})$. By \Cref{prop:SEP}, $I$ does not satisfy the strong exchange property. Moreover, $I$ cannot be transversal, since the equations for the $a_j$'s in \Cref{prop:transversal} have no integer solutions for the given values $\l_1=1, \l_2=3, \l_3=4, \l_4=5$. Alternatively $\Delta\l=(2,1,1)$ is not a partition.
\end{ex}

Nevertheless, we will see in later sections that ordinary powers of principal Borel sssi's  share similar algebraic properties as powers of polymatroidal ideals that are either transversal or satisfy the strong exchange property. 

\subsection{Numerical invariants of symmetric shifted ideals}
\label{section:invariants}

Inspired by the case of stable and strongly stable ideals, in this subsection we use partition generators to calculate numerical invariants associated with symmetric shifted and strongly shifted ideals. In fact, most formulas will hold more generally for arbitrary symmetric ideals.

Denote $\min (a) = \min \{ i \colon a_i \neq 0 \}$, $\max (a) = \max \{ i \colon a_i \neq 0 \}$. If $I$ is strongly stable,  then
\[
\codim(I) = \max\{\min(a) \colon x^a\in G(I)\} \quad \mathrm{and} \quad \pd(R/I) = \max\{\max(a) \colon x^a\in G(I)\}
\]
The latter formula follows from the Eliahou-Kervaire resolution \cite{EK}. Moreover, in both cases it is enough to only consider the Borel generators of $I$; see \cite[Proposition 2.14]{FMS11}.

Replacing exponent vectors with partitions, we derive an analogous formula for the codimension of a symmetric strongly shifted ideal, which is actually satisfied by an arbitrary symmetric monomial ideal.  

\begin{prop}
\label{prop:height}
Let $I$ be a symmetric monomial ideal. For each $\l \in \L(I)$ denote $\min(\l)=\min\{i: \l_i>0\}$. 
The height of $I$ is given by 
\[
\codim(I)=\max_{\l\in \L(I)}\{ \min(\l)\}.
\]
Moreover, if $I$ is symmetric strongly shifted, then  $\codim(I)=\max_{\l\in B(I)}\{ \min(\l)\}$.
\end{prop}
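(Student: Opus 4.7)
My approach is to express $\codim(I)$ as a purely combinatorial quantity attached to the monomial generators $G(I)$ and then compare it with $m:=\max_{\l\in\L(I)}\min(\l)$. Since $I$ is a monomial ideal, each minimal prime of $I$ has the form $P_S=(x_i:i\in S)$ for some $S\subseteq[n]$, so $\codim(I)$ equals the smallest cardinality of a subset $S\subseteq[n]$ that meets $\supp(g)$ for every $g\in G(I)$. To show $\codim(I)\leq m$, I will verify that $P:=(x_1,\dots,x_m)$ contains every element of $G(I)$. Any $g\in G(I)$ has the form $\sigma(x^\l)$ for some $\sigma\in\sym_n$ and $\l\in\L(I)$, and since the entries of $\l$ are non-decreasing, $\supp(x^\l)=\{\min(\l),\min(\l)+1,\dots,n\}$ has cardinality $n-\min(\l)+1\geq n-m+1$. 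A set of that size cannot be contained in $\{m+1,\dots,n\}$, so $\sigma(\supp(x^\l))\cap\{1,\dots,m\}\neq\emptyset$, proving $g\in P$.

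For the reverse inequality $\codim(I)\geq m$ I will use the symmetry of $I$. Fix a minimal prime $P$ of $I$ with $h=\codim(I)$, write $S\subseteq[n]$ for the set of indices of its variables, and choose $\l^*\in\L(I)$ realizing $\min(\l^*)=m$. Since $I$ is $\sym_n$-fixed, $\sigma(x^{\l^*})\in I\subseteq P$ for every $\sigma\in\sym_n$, which forces $\sigma(\{m,\dots,n\})\cap S\neq\emptyset$. Rephrased, every $h$-element subset of $[n]$ must intersect $\{m,\dots,n\}$; taking the subset $\{1,\dots,h\}$ then forces $h\geq m$.

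For the strongly shifted refinement, the inclusion $B(I)\subseteq\L(I)$ gives $\max_{\mu\in B(I)}\min(\mu)\leq m$. Conversely, pick $\l\in\L(I)$ with $\min(\l)=m$ and invoke \Cref{prop:Borelgens}(2) to produce $\mu\in B(I)$ with $|\mu|=|\l|$ and $\l\unlhd\mu$. The assumption $\min(\l)=m$ says $\Sigma_m(\l)=|\l|$, and the chain of dominance inequalities $\Sigma_m(\l)\leq\Sigma_m(\mu)\leq|\mu|=|\l|$ collapses to equality, forcing $\mu_1=\cdots=\mu_{m-1}=0$ and hence $\min(\mu)\geq m$. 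The only delicate point in the whole argument is the counting step in the upper bound: one must confirm that $P=(x_1,\dots,x_m)$ catches \emph{every} orbit element of a partition generator rather than only the representative $x^\l$, which is handled precisely by the cardinality comparison above.
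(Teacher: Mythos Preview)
Your proof is correct. The approach differs from the paper's in two respects worth noting.

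For the first formula, the paper invokes \Cref{prop:radical} to identify $\sqrt{I}=I_{n,c}$ with $c=\codim(I)$, then reads off $c$ from the minimal support size among monomials of $I$ via the chain $c=n-\min_{\l\in\L(I)}|\supp(\l)|+1=\max_{\l\in\L(I)}\min(\l)$. You instead work directly with the minimal-prime cover description of $\codim(I)$ for monomial ideals, proving the two inequalities separately by a pigeonhole count and a symmetry argument. Your route is more self-contained since it avoids the prior radical computation; the paper's route is shorter once that proposition is available. One small expository point: your ``rephrasing'' that every $h$-subset meets $\{m,\dots,n\}$ follows from $\sigma(\{m,\dots,n\})\cap S\neq\emptyset\Leftrightarrow \{m,\dots,n\}\cap\sigma^{-1}(S)\neq\emptyset$ and the transitivity of $\sym_n$ on $h$-subsets; it would read more smoothly if you said so.

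For the refinement to $B(I)$, the paper argues that each Borel move $\mu\mapsto\mu-e_j+e_i$ can only enlarge the support, so passing from partition Borel generators down to arbitrary partition generators never decreases $|\supp|$ (equivalently, never decreases $\min(\l)$). You instead use the partial-sum characterization of dominance directly: from $\l\unlhd\mu$ and $\Sigma_m(\l)=|\l|=|\mu|$ you force $\Sigma_m(\mu)=|\mu|$ and hence $\mu_1=\cdots=\mu_{m-1}=0$. The two arguments are dual views of the same fact; yours is marginally cleaner because it pinpoints the single inequality that must collapse.
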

\begin{proof}
From \Cref{prop:radical} it follows that $\sqrt{I}=I_{n,c}$, where $c=\codim(I)$.
It remains to determine the height of $I$. For this notice that a monomial $m\in \sqrt{I}$ if and only if $|\supp(m)|>n-c$. Therefore one has the chain of equalities
\begin{eqnarray*}
c &=&n-\min_{m\in I}\{|\supp(m)|\}+1=n-\min_{\l\in \L(I)}\{|\supp(\l)|\}+1 \\
&=&n-\min_{\l\in \L(I)}\{n-\min(\l)+1\}-1=\max_{\l\in \L(I)}\{\min(\l)\}.
\end{eqnarray*}
In the first line of the displayed equalities we use the fact that $\supp(m)=\supp(\pa(m))$. 

Suppose now that $I$ is a symmetric strongly shifted ideal. We prove that we can replace $\L(I)$ with $B(I)$ in the formula above by showing that each Borel move can only increase the size of the support.
This is because if $\mu_j>\mu_i$ and $\mu'=\mu-e_j+e_i$ then either $\mu_j-1>0$ and then $\supp(\mu)\subseteq \supp(\mu')$ or $\mu_j=1$ and $\mu_i=0$ in which case $\pa(\mu')=\mu$ and thus $|\supp(\mu')|=|\supp(\mu)|$.
\end{proof}

On the other hand, the projective dimension of a symmetric strongly shifted ideal cannot be expressed in terms of min/max of the partition Borel generators. 
%
%
%
In \Cref{prop:pd} below, we provide a formula for the projective dimension of a symmetric shifted ideal in terms of its partition generators using a different partition statistic we call $\med$. Recall from \cite[Theorem 3.1]{BDGMNORS} that the Betti numbers of a symmetric shifted ideal are given by
 \begin{equation}
    \label{eq:beta}
\beta_{i}(I)=\sum_{u\in G(I)}\binom{|G(C(u))|}{i},
\end{equation}
where for a monomial $u =\sigma(x^\lambda) \in G(I)$ with $\lambda \in P_n$ and $\sigma \in \sym_n$, the ideal $C(u)$ is constructed as follows (see \cite[proof of Theorem 3.2]{BDGMNORS}). 
First, one defines a total order on the set of monomials in $S=k[x_1,\ldots, x_n]$. 
\begin{defn}
  \label{defn:order}
  Let $\lambda,\mu \in \P_n$ and let $v= \tau(x^\mu)$ and $u=\sigma(x^\lambda)$ be distinct monomials in $S$, for some $\sigma, \tau \in \sym_n$. Define $v \prec u$ if one of the following conditions holds
\begin{enumerate}
\item $\mu <_\lex \lambda$, that is, either $|\mu| < |\l|$, or $|\mu| = |\l|$ and the leftmost non-zero entry of $\mu-\lambda$ is positive
\item $\mu=\lambda$ and $v <_\lex u$, that is, the leftmost non-zero entry of $\tau(\mu)-\sigma(\lambda)$ is positive.
\end{enumerate}
\end{defn}

Next, for a symmetric shifted ideal $I$ and a monomial $u =\sigma(x^\lambda) \in G(I)$, one defines $J=(v \in G(I): v \prec u)$ and
  \begin{equation}
    \label{eq:C(u)}
    C(u):=J:u=(x_{\sigma(1)},\dots,x_{\sigma(p)})
    +(x_{\sigma(k)}: p+1 \leq k \leq n-r,\ \sigma(k)<u_{\max}),
  \end{equation}
  where 
  \begin{equation*}
  \begin{split}
    &p=p(\lambda)=\#\{k: \lambda_k < \lambda_n-1\},\\
    &r=r(\lambda)=\#\{k: \lambda_k=\lambda_n\},\\
    &{u_{\max}=\max\{\sigma(k):\lambda_k=\lambda_n\}.}
  \end{split}
\end{equation*}

We are now ready to calculate the projective dimension of a symmetric shifted ideal. 
\begin{prop}
\label{prop:pd}
Let $I$ be a symmetric 
shifted ideal and define for every $\l\in P_n$ the integer $\med(\l)=|\{i: \l_i<\l_n\}|$. Then the projective dimension of $I$ is given by 
\[
\pd(R/I)=\max\limits_{\l\in \L(I)} \{\med(\l)\} +1.
\]
\end{prop}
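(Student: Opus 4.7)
My plan is to read off the projective dimension from the Betti number formula \eqref{eq:beta} cited from \cite{BDGMNORS}. Since $\pd(R/I)=\pd(I)+1$ and $\binom{|G(C(u))|}{i}\neq 0$ if and only if $0\leq i\leq |G(C(u))|$, the formula gives
\[
\pd(I) \;=\; \max\{i: \beta_i(I)\neq 0\} \;=\; \max_{u\in G(I)} |G(C(u))|.
\]
Thus it is enough to prove that $\max_{u\in G(I)} |G(C(u))| = \max_{\l\in \L(I)}\med(\l)$. I would do this by showing that for every $\l\in\L(I)$ one has $\max_{\sigma\in\sym_n}|G(C(\sigma(x^\l)))|=\med(\l)$; ranging over $\l$ then yields the claim.

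The main step is an explicit combinatorial analysis of $C(u)$. Fix $\l\in\L(I)$ and $\sigma\in\sym_n$, and set $u=\sigma(x^\l)$. Using \eqref{eq:C(u)}, the first summand contributes exactly $p=p(\l)$ distinct variables $x_{\sigma(1)},\dots,x_{\sigma(p)}$, and the second summand contributes the variables $x_{\sigma(k)}$ for those $k$ with $p+1\leq k\leq n-r$ and $\sigma(k)<u_{\max}$. Because $\sigma$ is a permutation, the index sets $\{1,\dots,p\}$ and $\{p+1,\dots,n-r\}$ give disjoint collections of variables, so
\[
|G(C(u))| \;=\; p \;+\; \bigl|\{k : p+1\leq k\leq n-r,\ \sigma(k)<u_{\max}\}\bigr|.
\]
In particular $|G(C(u))|\leq p+(n-r-p)=n-r$. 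Now observe that since $\l$ is a partition, $\l_i\leq \l_n$ for every $i$, so $\med(\l)=|\{i:\l_i<\l_n\}|=n-r$.

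It remains to produce a permutation $\sigma$ achieving the upper bound $n-r$. Since the position $n$ satisfies $\l_n=\l_n$, it lies in the block $B=\{k:\l_k=\l_n\}$ on which $u_{\max}$ is computed. Choose any $\sigma$ with $\sigma(n)=n$; then $u_{\max}=\max\{\sigma(k):k\in B\}=n$, while for every $k\in\{p+1,\dots,n-r\}$ we have $\sigma(k)\neq n$ and hence $\sigma(k)<u_{\max}$. Because $I$ is symmetric, $u=\sigma(x^\l)\in G(I)$, so this choice is legal, and it gives $|G(C(u))|=p+(n-r-p)=n-r=\med(\l)$, completing the argument. The only point requiring care is the bookkeeping that the two parts of $C(u)$ contribute independent variables and that $r\geq 1$ so that $u_{\max}$ can be taken to be $n$; both are immediate from the definitions.
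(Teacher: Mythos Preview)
Your proof is correct and follows essentially the same approach as the paper's: both reduce to $\pd(I)=\max_{u\in G(I)}|G(C(u))|$ via the Betti number formula, then compare $|G(C(\sigma(x^\lambda)))|$ with $\med(\lambda)=n-r$. The paper simply asserts that $|G(C(u))|\leq |G(C(x^{\pa(u)}))|$ and $|G(C(x^\lambda))|=\med(\lambda)$ ``from the definitions,'' whereas you carry out that computation explicitly and note that any $\sigma$ with $\sigma(n)=n$ (in particular the identity) attains the maximum; the arguments are otherwise the same.
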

\begin{proof}
Utilizing the notation in \eqref{eq:C(u)}, from the formula for the Betti numbers given by \eqref{eq:beta} we deduce that 
\[
\pd(I)=\max\{i: \exists u\in G(I) \text{ with } |G(C(u))|\geq i\}=\max\{|G(C(u))|: u\in G(I)\}.
\]
From \eqref{eq:C(u)} it follows that $|G(C(u))|\leq G(C(x^{\pa(u)}))$. Moreover, it can also be seen from the definitions that $|G(C(x^\l))|=\med(\l)$ for all $\l\in P_n$. Putting together the considerations above, we have shown that
\[
\pd(I)=\max\{\med(\l):\l\in \L(I)\}.
\]
\end{proof}

Unlike for height, when calculating the projective dimension of a symmetric strongly shifted ideal we cannot replace $\L(I)$ with the partition Borel generators of $I$.
\begin{ex}
  Let $I = \Sss(\{ (1,5,5) \})$. Notice that $(2,4,5) \in \L(I)$, however $\med((2,4,5))=2> 1= \med((1,5,5))$. 
\end{ex}

\medskip
When studying powers of an ideal $I$, a useful invariant is the \emph{analytic spread}, $\ell(I)$, of $I$ (i.e, the Krull dimension of the fiber cone $\mathcal{F}(I)$ of $I$, which we define in \Cref{section:Rees}). For instance, $\ell(I)$ controls the asymptotic growth of $\depth(R/I^k)$ as a function of $k$, thanks to Brodmann's formula \cite{Bro1}:
\[
  \lim_{k \to \infty} \depth(R/I^k) \leq n - \ell(I),
\] 
The analytic spread of an arbitrary ideal is usually difficult to calculate. However, if $I$ is an equigenerated monomial ideal, $\ell(I)$ coincides with the rank of the matrix whose rows are the exponent vectors of the monomial generators of $I$ (see \cite[Exercise 8.21]{HSwanson}). Using this fact, we can compute the analytic spread of any equigenerated symmetric monomial ideal.

%
 %
%

\begin{prop}
\label{prop:analyticspread}
  Let $I$ be an equigenerated symmetric monomial ideal. The analytic spread of $I$ is given by 
 \[ 
  \ell(I) = \begin{cases}
  n & \text{ if } I \neq \Sss(\{ (a, \ldots, a)\}) \text{ for any } a\in\N,\\
  1 & \text{ if } I = \Sss(\{ (a, \ldots, a)\}) \text{ for some } a\in\N.
  \end{cases}
 \]
\end{prop}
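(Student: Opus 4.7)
The plan is to apply the fact cited just before the proposition statement: for an equigenerated monomial ideal, $\ell(I)$ equals the rank of the matrix whose rows are the exponent vectors of the minimal monomial generators of $I$. Since $I$ is symmetric, its minimal generators form a disjoint union of $\sym_n$-orbits of monomials $x^\lambda$ with $\lambda \in \Lambda(I)$, and since $I$ is equigenerated all of them have the same total degree $d = |\lambda|$ which is strictly positive as $I \neq R$. The problem thus reduces to computing this rank in each of the two cases.

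In the case $I = \Sss(\{(a,\ldots,a)\})$, the orbit of $(a,\ldots,a)$ under $\sym_n$ is a singleton, so $I = (x_1^a x_2^a \cdots x_n^a)$ is principal; the matrix has a single row and rank $1$, giving $\ell(I) = 1$.

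In the complementary case, since $I$ is symmetric and equigenerated but not of the excluded form, some partition generator $\lambda \in \Lambda(I)$ must have at least two distinct parts; pick indices $i < j$ with $\lambda_i < \lambda_j$. I would then prove that the linear span of the $\sym_n$-orbit of $\lambda$ is all of $\mathbb{R}^n$, which forces the rank of the full generator matrix to be $n$. The key observation is that, for any distinct indices $k, l$, choosing any permutation $\sigma \in \sym_n$ with $\sigma(i) = k$ and $\sigma(j) = l$, both $\sigma(\lambda)$ and $(\sigma_{kl}\sigma)(\lambda)$ lie in the orbit, and their difference equals $(\lambda_j - \lambda_i)(e_k - e_l)$, a nonzero scalar multiple of $e_k - e_l$. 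Thus the span of the orbit contains every vector $e_k - e_l$, hence the entire hyperplane $\{v \in \mathbb{R}^n : \sum_s v_s = 0\}$. Combining this with $\lambda$ itself, which has $\sum_s \lambda_s = d > 0$ and therefore lies outside that hyperplane, one obtains the full space $\mathbb{R}^n$ as the span.

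No serious obstacle is expected, since the argument reduces to elementary linear algebra once one invokes the rank-of-matrix interpretation of the analytic spread from \cite[Exercise 8.21]{HSwanson}. The only care point is justifying the existence of a non-constant partition generator in the second case (a direct consequence of $I$ being symmetric, equigenerated, and distinct from the excluded form), and then generating all coordinate-difference vectors $e_k - e_l$ from the orbit via the conjugation trick described above.
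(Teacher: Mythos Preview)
Your proposal is correct and shares the same starting point as the paper's proof, namely the identification of $\ell(I)$ with the rank of the matrix of exponent vectors via \cite[Exercise 8.21]{HSwanson}. The difference lies in how you then compute this rank. The paper observes that the row space $V\subseteq \Q^n$ is an $\sym_n$-subrepresentation of the permutation representation, invokes the irreducible decomposition $\Q^n=T\oplus S$ into the trivial and standard representations, and rules out $V=0$ and $V=S$ to conclude $V=T$ or $V=\Q^n$. You instead argue directly: given a non-constant $\lambda\in\Lambda(I)$, you produce each $e_k-e_l$ as a scalar multiple of a difference of two orbit elements, so the span contains the hyperplane $\{\sum v_s=0\}$, and then $\lambda$ itself (with $|\lambda|=d>0$) forces the span to be all of $\Q^n$. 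Your route is more elementary in that it avoids any appeal to representation theory, while the paper's argument is shorter once that machinery is granted; in essence you are proving by hand the irreducibility of the standard representation in exactly the form needed.
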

\begin{proof}
By \cite[Exercise 8.21]{HSwanson}, the analytic spread of an equigenerated monomial ideal $I$ is the rank of the matrix $M$ whose rows are the exponent vectors of the monomial generators of $I$. Hence, we only need to show that $M$ contains $n$ linearly independent rows. 

Let $V$ be the row space of $M$ viewed as a $\Q$-vector space. Since $I$ is a symmetric ideal, $V\subseteq \Q^n$ is a representation of $\sym_n$ acting naturally on $\Q^n$. Recall that the natural permutation representation of $\sym_n$ on $\Q^n$ decomposes into two irreducible representations: the trivial representation $T=\{(a,\ldots,a): a\in \Q\}$ and the standard representation $S=\{(q_1,\ldots, q_n): q_i\in \Q, \sum_{i=1}^n q_i=0\}$. Consider the projections $V_T$ and $V_S$ of $V$ onto $T$ and $S$ respectively. Since  $V_T$ and $V_S$ are  subrepresentations of the irreducible representations $T$ and $S$ respectively, we have $V_T=0$ or $V_T=T$ and $V_S=0$ or $V_S=S$ respectively, which yields four possibilities for $V$: $V=0$ or $V=T$ or $V=S$ or $V=S\oplus T=\Q^n$. 
Since $\Lambda(I)\subseteq V$, $\Lambda(I)$ contains at least one nonzero vector,  and no element of $\Lambda(I)$ is in $S$ (since every nonzero element of $S$ must have at least one negative coordinate), we are left with the possibilities $V=T$ or $V=S\oplus T=\Q^n$.

The case $V=T$ corresponds to $I = \Sss( \{ (a, \ldots, a)\})$ for some $a\in\N$, which gives $\ell(I)=\dim(V)=\dim(S)=1$. The case $V=\Q^n$ corresponds to 
$\Lambda(I) \cap S\neq \emptyset$ and gives $\ell(I)=\dim(V)=n$. 
\end{proof}

For $I\neq \Sss(\{ (a, \ldots, a) \})$, \Cref{prop:analyticspread} implies that $\ell(I) = \max_{\l\in \L(I)}\{\max(\l)\}$, where for each $\l\in \L(I)$ we denote $\max(\l)=\max\{i: \l_i>0\}$. A similar formula for the analytic spread of an equigenerated strongly stable ideal, with partitions replaced by exponent vectors of the monomial generators, was proved by Monta\~no in \cite[Proposition 5.2.6]{JonathanThesis}, using methods from convex geometry.

For a monomial ideal $I$, the {\em Newton polyhedron} of $I$ is defined as the Minkowski sum
\[
NP(I):= \np(I) + \mathbb{R}^n_{\geq 0}.
\] 
It is well known that the integral closure $\ov{I}$ of $I$ can be determined via the formula $NP(\ov{I})= NP(I) \cap \N^n$. Moreover, by \cite[Theorem 2.3]{BiviaAusina} the analytic spread of $I$ is 
\begin{equation}
\label{eq:compactfaces}
  \ell(I) = \max \{ \dim F \mid F \text{ is a compact face of } NP(I) \} +1.
\end{equation}
If $I$ is an equigenerated monomial ideal, the Newton polyhedron $NP(I)$ has a unique compact face of maximal dimension, which coincides with the Newton polytope $\np(I)$. 
As a consequence of \Cref{prop:analyticspread}, we then have the following description of the Newton polytope of an arbitrary  equigenerated symmetric monomial ideal.

\begin{cor}
\label{cor:uniquefacet}
 Let $I$ be an equigenerated symmetric monomial ideal and let $\np(I)$ denote the Newton polytope of $I$. Then
 \[
 \dim (\np(I)) = 
 \begin{cases}
 n-1 & \text{ if } I \neq \Sss(\{ (a, \ldots, a)\}) \text{ for any } a\in\N,\\
  0 & \text{ if } I = \Sss(\{ (a, \ldots, a)\}) \text{ for some } a\in\N.
 \end{cases}
 \]
\end{cor}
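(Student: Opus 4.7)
The plan is to derive this result directly from \Cref{prop:analyticspread} by relating the dimension of the Newton polytope of an equigenerated monomial ideal to its analytic spread. The crucial input is the formula \eqref{eq:compactfaces} of Bivi\`a-Ausina, which expresses the analytic spread in terms of the maximal dimension of a compact face of the Newton polyhedron $NP(I) = \np(I) + \mathbb{R}^n_{\geq 0}$.

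First I would justify the statement made in the excerpt just before the corollary: when $I$ is equigenerated, the Newton polyhedron $NP(I)$ has a unique compact face of maximal dimension, and that face is precisely $\np(I)$. Since all exponent vectors of generators of $I$ have the same coordinate sum $d$, the Newton polytope $\np(I)$ lies in the affine hyperplane $H = \{t \in \mathbb{R}^n : t_1 + \cdots + t_n = d\}$, and adding any nonzero element of $\mathbb{R}^n_{\geq 0}$ moves a point strictly off $H$ into the unbounded part of $NP(I)$. Hence $\np(I) = NP(I) \cap H$ is compact, and every compact face of $NP(I)$ is contained in $\np(I)$, so $\np(I)$ is indeed the unique compact face of maximal dimension.

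Combining this observation with \eqref{eq:compactfaces} yields the identity $\ell(I) = \dim(\np(I)) + 1$. Plugging into \Cref{prop:analyticspread}, the two cases $\ell(I) = n$ and $\ell(I) = 1$ translate directly into $\dim(\np(I)) = n-1$ and $\dim(\np(I)) = 0$, respectively. The second case is also transparent geometrically: $I = \Sss(\{(a,\ldots,a)\}) = (x_1 \cdots x_n)^a$ is principal, so $\np(I)$ is the single point $(a, \ldots, a)$.

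The only step requiring care is the justification that $\np(I)$ is a face (and the unique maximal compact face) of $NP(I)$; everything else is a direct application of results already at our disposal. I do not anticipate any substantive obstacle here since the equigenerated hypothesis makes the hyperplane argument above essentially automatic.
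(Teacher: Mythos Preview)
Your proposal is correct and follows essentially the same approach as the paper: the paper's proof simply says the result is immediate from \eqref{eq:compactfaces} and \Cref{prop:analyticspread}. You additionally spell out the justification of the claim (stated without proof just before the corollary) that $\np(I)$ is the unique maximal compact face of $NP(I)$ in the equigenerated case, which is a welcome elaboration but not a departure in strategy.
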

\begin{proof}
This is immediate from \eqref{eq:compactfaces} and \Cref{prop:analyticspread}. 
\end{proof}

In \cite{{HQureshi},{DHQureshi}}, the analytic spread of an equigenerated monomial ideal $I$ has been characterized 
in terms of the \emph{linear relation graph} $\Gamma = \Gamma(I)$ of $I$. This graph is defined as follows. If $G(I) = \{u_1, \ldots, u_m \}$ denotes a minimal monomial generating set for $I$, the edge set of $\Gamma$ is given by
\begin{equation}
\label{eq:Gamma}
E(\Gamma) = \{ \{ i, j \} : \text{ there exist } u_k, u_l \in G(I) \text{ such that } x_i u_k = x_j u_l   \},
\end{equation}
while the vertex set of $\Gamma$ is given by $\,\displaystyle{V(\Gamma) = \bigcup_{i : \{i,j\} \in E(\Gamma)}} \{i \}$. 

Assume further that $I$ is equigenerated and let $r$ and $s$ denote the number of vertices and the number of connected components of $\Gamma(I)$ respectively. By \cite[Lemma 4.2]{HQureshi} one has
\begin{equation}
  \label{eq:anspreadgraph}
\ell(I) \geq r -s +1,
\end{equation}
with equality holding if $I$ is a polymatroidal ideal, or, more generally,  if $I$ has linear syzygies \cite[Lemma 4.3]{DHQureshi}.
Since equigenerated symmetric shifted ideals have a linear resolution by \cite[Theorem 3.2]{BDGMNORS}, we then deduce the following immediate corollary.
\begin{cor}
   \label{cor:anspread shifted}
   Let $I$ be an equigenerated symmetric shifted ideal. Then, $\ell(I) = r -s +1$, where $r$ is the number of vertices in 
  $\Gamma(I)$ and $s$ is the number of its connected components.  
\end{cor}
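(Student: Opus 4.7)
The plan is to invoke the two cited results in sequence; this is essentially a short argument rather than a technical proof. Specifically, by \cite[Theorem 3.2]{BDGMNORS}, an equigenerated symmetric shifted ideal $I$ has a linear minimal free resolution over $R$. In particular, the first syzygy module of $I$ is generated in the lowest possible degree, meaning $I$ has linear syzygies in the sense required by \cite[Lemma 4.3]{DHQureshi}.

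Next I would combine this with the general inequality in \eqref{eq:anspreadgraph}, due to \cite[Lemma 4.2]{HQureshi}, which asserts $\ell(I) \geq r - s + 1$ for any equigenerated monomial ideal. The content of \cite[Lemma 4.3]{DHQureshi} is precisely that the reverse inequality holds when $I$ has linear syzygies, thereby upgrading \eqref{eq:anspreadgraph} to an equality. Applying this to our situation produces the claimed identity $\ell(I) = r - s + 1$.

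Since every logical ingredient is already in place, there is no genuine obstacle to address; the main thing to state carefully is the transition from linear resolution to linear syzygies, which is immediate from the definition (all matrix entries in the first differential of the resolution are linear forms). Hence the proof amounts to one or two sentences. I would not attempt to reprove \cite[Lemmas 4.2 and 4.3]{HQureshi} or \cite[Theorem 3.2]{BDGMNORS}; the corollary is designed to be a direct consequence of these, and the novelty is only in identifying the class of equigenerated symmetric shifted ideals as one to which \cite[Lemma 4.3]{DHQureshi} applies.
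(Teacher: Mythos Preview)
Your proposal is correct and matches the paper's own argument essentially verbatim: the paper simply notes that equigenerated symmetric shifted ideals have a linear resolution by \cite[Theorem 3.2]{BDGMNORS}, hence linear syzygies, so equality in \eqref{eq:anspreadgraph} follows from \cite[Lemma 4.3]{DHQureshi}.
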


In fact, we can give a full description of the linear relation graph of an equigenerated ssi, which we will use in \Cref{section:assprimes} to study the depths of powers of an ideal of this kind. 
\begin{prop}
   \label{prop:connected}
   Let $I$ be an equigenerated symmetric shifted ideal. If $I =\Sss(\{(a, \ldots, a)\})$ for some $a \in \N$, then the linear relation graph $\Gamma=\Gamma(I)$ is the graph with $n$ vertices and no edges. Otherwise, $\Gamma$ is connected, with $V(\Gamma) = \{ 1, \ldots, n \}$.
\end{prop}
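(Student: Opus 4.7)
The plan is to leverage the symmetry of $I$ to show that the edge set of $\Gamma=\Gamma(I)$ is an $\sym_n$-invariant subset of the unordered pairs of $\{1,\ldots,n\}$, and then observe that this set is either empty or everything.

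First I would establish the invariance. If $\{i,j\}\in E(\Gamma)$ is witnessed by monomials $u_k,u_l\in G(I)$ with $x_iu_k=x_ju_l$, then for every $\sigma\in\sym_n$ we have $\sigma(u_k),\sigma(u_l)\in G(I)$ (since $I$ is $\sym_n$-fixed, so is $G(I)$) and $x_{\sigma(i)}\sigma(u_k)=x_{\sigma(j)}\sigma(u_l)$. Hence $\{\sigma(i),\sigma(j)\}\in E(\Gamma)$. Because $\sym_n$ acts transitively on the unordered pairs of $\{1,\ldots,n\}$, $E(\Gamma)$ is either empty, or equals the edge set of the complete graph $K_n$, in which case $\Gamma$ is connected with $V(\Gamma)=\{1,\ldots,n\}$.

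Next, I would dispose of the constant-partition case. If $I=\Sss(\{(a,\ldots,a)\})$, the analysis of dominance shows that $(a,\ldots,a)$ is the unique partition of $na$ in $P_n$ dominated by itself, so $\Lambda(I)=\{(a,\ldots,a)\}$ and $I=((x_1\cdots x_n)^a)$ is principal. With a single monomial generator $u$, the equation $x_iu=x_ju$ forces $i=j$, so $E(\Gamma)=\emptyset$, matching the claim.

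For the complementary case, by the first step it suffices to produce a single edge. Since $I$ is equigenerated and $I\neq\Sss(\{(a,\ldots,a)\})$ for any $a$, there must exist $\lambda\in\Lambda(I)$ with $\lambda_1<\lambda_n$ (otherwise every $\lambda\in\Lambda(I)$ is a constant partition, and equigeneratedness pins down the constant, forcing $\Lambda(I)$ to be a singleton of the excluded form). Since $I$ is symmetric shifted, $u:=x^\lambda\cdot x_1/x_n\in I$. As $\deg(u)=|\lambda|$ equals the common generating degree and $u$ is a monomial, equigeneratedness forces $u\in G(I)$. Then with $v:=x^\lambda\in G(I)$ we have $x_nu=x_1v$, so $\{1,n\}\in E(\Gamma)$, and by the symmetry argument $\Gamma=K_n$ on $\{1,\ldots,n\}$.

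The main technical point to be careful with is the passage from $u\in I$ to $u\in G(I)$: this rests crucially on the equigenerated hypothesis, which guarantees that any monomial in $I$ of degree equal to the generating degree is itself a minimal generator. The rest of the argument reduces to the clean observation that $\sym_n$ acts 2-transitively on $\{1,\ldots,n\}$, making the structure of $\Gamma$ essentially binary.
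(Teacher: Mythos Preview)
Your proof is correct and slightly cleaner than the paper's. Both arguments handle the principal case $I=\Sss(\{(a,\ldots,a)\})$ identically and both produce the edge $\{1,n\}$ via the shifted property applied to some $\lambda\in\Lambda(I)$ with $\lambda_1<\lambda_n$. The difference is in how connectivity is obtained from there. The paper argues vertex by vertex: for each $1<i<n$ it produces either $\{i,n\}$ (using the shifted move $\lambda+e_i-e_n$ when $\lambda_i<\lambda_n$) or $\{1,i\}$ (applying the transposition $(i\,n)$ to the relation $x_1x^\lambda=x_nx^\mu$ when $\lambda_i=\lambda_n$). You instead observe once and for all that $E(\Gamma)$ is $\sym_n$-stable and invoke $2$-transitivity, which immediately forces $E(\Gamma)$ to be empty or the full edge set of $K_n$. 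Your route is shorter, avoids the case split, and yields the stronger conclusion $\Gamma=K_n$; the paper's hands-on construction, on the other hand, makes the specific linear syzygies witnessing each edge explicit.
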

\begin{proof}
The first claim is clear, since $I =\Sss(\{(a, \ldots, a)\}=(x_1^a \cdots x_n^a)$ is a principal ideal, so no  linear relations occur.

Suppose that $I \neq \Sss(\{(a, \ldots, a)\})$. Then, there exists a $\l \in \Lambda(I)$ with $\l_1 < \l_n$. Let $\mu := \l + e_1 - e_n$, then $\mu \in \Lambda(I)$ since $I$ is symmetric shifted. Moreover, $x_1 x^\l = x_n x^{\mu}$, that is, $\{ 1,n \} \in E(\Gamma)$. We next show that for any  $1 <i <n$ either $\{ i,n \} \in E(\Gamma)$ or $\{ 1,i \} \in E(\Gamma)$, whence  $\Gamma$ is a connected graph on $n$ vertices. 
If $\l_i < \l_n$, by the symmetric shifted property of $I$ we have that $\eta := \l + e_i - e_n \in \L(I)$. Moreover, $x_i x^\l = x_n x^{\eta}$, that is, $\{ i,n \} \in E(\Gamma)$. 
On the other hand, if $\l_i = \l_n$, let $\tau = (i,n) \in \sym_n$. Then, applying $\tau$ to the equality $x_1 x^\l = x_n x^{\mu}$ we get $x_1 x^\l = x_i \tau(x^{\mu})$. Since $\tau(x^{\mu}) \in G(I)$, this means that $\{ 1,i \} \in E(\Gamma)$ and the proof is complete.
\end{proof}

\section{Integrally closed sssi's}
\label{section:normality}

In this section and the next, we use the results from \Cref{section:Borelgens} to study ordinary powers of symmetric strongly shifted ideals. As it turns out, the symmetrization result of \Cref{thm:sshifted vs sstable} provides deep insight on this matter. 

We begin by identifying classes of symmetric strongly shifted ideals which are well-behaved with respect to integral closure.  
An ideal is said to be \emph{integrally closed} if it coincides with its integral closure, and is called \emph{normal} if all its powers are integrally closed. To understand normality of an ideal $I$, a useful object is the \emph{Rees ring} of $I$, which is defined as the subring $\R(I)=\bigoplus_{i\geq 0}I^i t^i \subseteq R[It]$. Indeed, since $R=K[x_1, \ldots, x_n]$ is a normal domain, $I$ is normal if and only if the Rees algebra $\R(I)$ is a normal domain \cite[Propositions 5.2.1 and 5.2.4]{HSwanson}. 

The following example shows that a sssi need not be integrally closed or normal.

\begin{ex}
  \label{ex:notintegrallyclosed}
   Consider $I=\Sss(\{(2,2,8), (0,6,6)\})$ and observe that for $\l=(1,4,7)$ one has $2\l=(2,2,8)+(0,6,6)$. Hence, $x^{2\l}\in I$ and $x^\l\in \ov{I}$. However $\l\not\in P(I)$ so $x^\l\not\in I$.
   Hence, $I$ is not integrally closed and therefore not normal. 
\end{ex}

However, \Cref{thm:sshifted vs sstable} indicates a strategy to construct sssi's which are integrally closed or normal.
In particular, we can identify two classes of normal symmetric strongly shifted ideals, obtained by symmetrization of certain normal strongly stable ideals. 

A monomial ideal is called a \emph{lex-segment ideal} if, for every degree $d$, the set of monomials of degree $d$ in $I$ forms a {\em lex-segment}; namely, for each degree $d$, there exists a monomial $u \in I$ of degree $d$ so that $I$ contains every degree-$d$ monomial $v$ which is larger than $u$ in the lexicographic order. It is well-known that lex-segment ideals are strongly stable, e.g. \cite[p.~103]{HHbook}.

\begin{prop}
\label{prop:normal}
   Let $J$ be an integrally closed strongly stable ideal, and let $\displaystyle{I = \bigcap_{\sigma \in \sym_n} \sigma(J)}$ be its symmetrization in the sense of \Cref{thm:sshifted vs sstable}. Then, $I$ is an integrally closed sssi. 
 
  Moreover, if $I$ is a sssi such that  $\bor(x^{B(I)})$  is normal, then $I$ is normal. In particular, $I$ is normal if either of the following conditions hold
\begin{enumerate}
  \item $B(I) = \{ \l \}$ for some $\l \in P_n$, i.e., $I$ is a principal Borel sssi; or
  \item $\bor(x^{B(I)})$ is an equigenerated lex-segment ideal.
\end{enumerate}
\end{prop}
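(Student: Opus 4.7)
The proof splits into three tasks. For the first assertion, \Cref{thm:sshifted vs sstable} already guarantees that $I$ is an sssi, so only integral closedness remains. Since each $\sigma \in \sym_n$ acts as a ring automorphism of $R$, the ideal $\sigma(J)$ is integrally closed whenever $J$ is; the claim then follows from the standard fact that an arbitrary intersection of integrally closed ideals is integrally closed.

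For the main implication, set $J = \bor(x^{B(I)})$ and assume $J$ is normal. I will show that each power $I^k$ is integrally closed. By \Cref{cor:powers} and \Cref{prop:integralclosure}, both $I^k$ and $\ov{I^k}$ are sssi's, so it suffices to prove the inclusion of partition sets $P(\ov{I^k}) \subseteq P(I^k)$. For $\mu \in P(\ov{I^k})$, an integral equation $x^{s\mu} = \prod_{i=1}^{s} x^{\beta^{(i)}}$ with each $x^{\beta^{(i)}} \in I^k$, together with the containment $I \subseteq J$ that \Cref{thm:sshifted vs sstable} provides, places $x^{s\mu} \in (J^k)^s$, so $x^\mu \in \ov{J^k} = J^k$ by normality of $J$. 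The crux is now the following claim: for a partition $\mu$, the containment $x^\mu \in J^k$ forces $x^\mu \in I^k$. To establish it, I will use that $J^k$ is strongly stable (products of strongly stable ideals are strongly stable, \cite[Proposition 1.2]{GuoWu}) and write $x^\mu$ as a monomial multiple of a generator of $J^k$, which itself factors as $x^{\alpha_1} \cdots x^{\alpha_k}$ with $x^{\alpha_i} \in G(J)$ and hence $\alpha_i \prec_B \l_i$ for some $\l_i \in B(I)$. Since $\prec_B$ is additive, $\alpha_1 + \cdots + \alpha_k \prec_B \l_1 + \cdots + \l_k$; invoking \Cref{prop:Borelsum}(2) yields $\nu \in B(I^k)$ with $\l_1 + \cdots + \l_k \unlhd \nu$, whence $\alpha_1 + \cdots + \alpha_k \prec_B \nu$ by transitivity. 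Thus $x^{\alpha_1 + \cdots + \alpha_k} \in \bor(x^{B(I^k)})$, so $x^\mu \in \bor(x^{B(I^k)})$, and applying \Cref{prop:partitions sstable} to the partition $\mu$ delivers $\mu \in P(\Sss(B(I^k))) = P(I^k)$.

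I expect this crux to be the main obstacle, since it requires carefully combining componentwise divisibility of monomials with Borel dominance at multiple layers. For the two specific conditions, case (1) reduces to the observation that $\bor(x^\l)$ is a principal Borel ideal, hence polymatroidal by \cite[Example 2.6(c)]{HHibi02}, and every polymatroidal ideal is normal (its Rees algebra is the base ring of a discrete polymatroid, well known to be normal). For case (2), equigenerated lex-segment ideals form a classical family of normal monomial ideals. In both situations the main implication from Part 2 applies and yields the normality of $I$.
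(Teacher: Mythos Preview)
Your proof is correct, but the second part takes a more elaborate route than the paper. The paper observes directly that $J^k=\bor\bigl(x^{kB(I)}\bigr)$ (this is the content of your additivity argument for $\prec_B$) and that $I^k=\Sss\bigl(kB(I)\bigr)$ by \Cref{prop:Borelsum}, so \Cref{thm:sshifted vs sstable}(2) applied to $I^k$ gives $I^k=\bigcap_{\sigma\in\sym_n}\sigma(J^k)$ immediately. Then the first paragraph of the proposition, already established, yields that $I^k$ is integrally closed whenever $J^k$ is. Your approach instead unpacks this: you prove by hand that a partition $\mu$ with $x^\mu\in J^k$ must lie in $P(I^k)$, tracking Borel dominance through the factorization of generators and finishing with \Cref{prop:partitions sstable}. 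This is essentially reproving the relevant instance of \Cref{thm:sshifted vs sstable} from scratch; it works, but the paper's route is shorter and makes clearer that the normality transfer is a direct consequence of the symmetrization formula for powers. For case~(1), the paper appeals to the normality of principal Borel ideals via \cite{DeNegri} and \cite{BrunsConca} rather than the polymatroidal route you suggest; both justifications are valid.
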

\begin{proof}
   First, suppose that $J$ is integrally closed. Since every $\sigma \in \sym_n$ acts as an isomorphism on $R$, then $\sigma(J)$ is integrally closed for every $\sigma \in \sym_n$. As the intersection of integrally closed ideals is integrally closed \cite[Remark 1.1.3]{HSwanson}, it follows that $I$ is integrally closed.
   
   Now let  $I$ be a sssi  and $J=\bor(x^{B(I)})$. Then, for every $k \geq 1$, $J^k= \bor(x^{kB(I)})$ is strongly stable by \cite[Proposition 1.2]{GuoWu}. Moreover, by \Cref{cor:powers} and \Cref{prop:Borelsum} we have that $I^k$ is symmetric strongly shifted, with $I^k=\Sss(kB(I))$. Hence, \Cref{thm:sshifted vs sstable}~(2) implies that $\displaystyle{I^k = \bigcap_{\sigma \in \sym_n} \sigma(J^k)}$. 
   Therefore, from the first part of the proof it follows that $I^k$ is integrally closed whenever $J^k$ is, that is, $I$ is normal whenever $J$ is. 

Finally, to establish the two particular cases it remains to note that if $I$ is a principal Borel sssi then $J$ is a principal Borel ideal. Furthermore, principal Borel ideals and equigenerated lex-segment ideals are normal as 
 follows from \cite[Proposition 2.14]{DeNegri} (see also \cite[p.~3]{BrunsConca} for principal Borel ideals and \cite[Theorem 5.3.2]{JonathanThesis} for equigenerated lex-segment ideals).
\end{proof}


The normality of an ideal $I$ as in \Cref{prop:normal} provides information on the singularities of the Rees ring $\R(I)$ and of the \emph{special fiber ring} $\cF(I) := \R(I)/ \m \R(I)$. These rings naturally appear in algebraic geometry in the context of blow-up constructions, which are a fundamental tool in the study of singularities.   

\begin{prop}
\label{prop:normalRees}
Let $I=\Sss(B)$ be an equigenerated, normal sssi (e.g., $I$ satisfies one of the conditions in \Cref{prop:normal}). Then, the Rees ring $\R(I)$ and the special fiber ring $\cF(I)$ are Cohen-Macaulay normal domains.
Moreover, $\cF(I)$ has rational singularities if $K$ is of characteristic 0, and  is $F$-rational if $K$ is of positive characteristic
\end{prop}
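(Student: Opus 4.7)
The plan is to view both $\R(I)$ and $\cF(I)$ as toric rings (affine semigroup rings) and then invoke classical results on normal semigroup rings. Writing $G(I) = \{u_1, \ldots, u_m\}$ for the minimal monomial generating set of $I$, with exponent vectors $a_1, \ldots, a_m \in \N^n$ of common coordinate sum $d$, we have $\R(I) = R[It] \subseteq R[t]$ as the $K$-subalgebra generated by the monomials $x_1,\ldots,x_n,u_1t,\ldots,u_mt$, which realizes $\R(I)$ as an affine semigroup ring $K[S_{\R}]$. Under the equigenerated hypothesis, $\cF(I) \cong K[u_1t, \ldots, u_m t]$ is likewise an affine semigroup ring $K[S_{\cF}]$, where $S_{\cF}$ is generated by $\{(a_j,1)\}_{j=1}^m$.

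First I would address normality. Since $R$ is a regular domain and $I$ is assumed to be a normal ideal, $\R(I)$ is a normal domain by \cite[Proposition 5.2.4]{HSwanson}. For $\cF(I)$, the isomorphism $\cF(I) \cong K[S_{\cF}]$ reduces normality of $\cF(I)$ to normality of the affine semigroup $S_{\cF}$. The latter follows from normality of $I$ together with the equigenerated hypothesis upon translating $I^k = \ov{I^k}$ via the identification $\ov{I^k} = (x^\alpha : \alpha \in NP(I^k) \cap \N^n)$ and the equality $NP(I^k) = k\cdot \np(I) + \mathbb{R}^n_{\geq 0}$. Concretely, on the affine hyperplane $\{\sum x_j = kd\}$, monomials of $I^k$ correspond to $k$-fold sums from $A = \{a_1,\ldots,a_m\}$, while monomials of $\ov{I^k}$ correspond to $k\cdot\np(I) \cap \N^n$, and the equality of these two sets for all $k$ is precisely the normality condition on the affine semigroup $S_{\cF}$.

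Cohen--Macaulayness of $\R(I)$ and $\cF(I)$ then follows immediately from Hochster's theorem that normal affine semigroup rings are Cohen--Macaulay. Finally, $\cF(I) = K[S_{\cF}]$ is the coordinate ring of a normal affine toric variety, so it has rational singularities in characteristic zero by a classical theorem (due to Hochster, or obtainable via the toric resolution of Kempf--Knudsen--Mumford--Saint-Donat), and is $F$-rational in positive characteristic by work of Smith (or via the stronger fact due to Hochster--Huneke that normal affine semigroup rings are $F$-regular, hence $F$-rational). The main subtlety I anticipate is the careful passage from normality of $I$ to normality of $S_{\cF}$; the equigenerated hypothesis is essential here since it ensures that the comparison between $k$-fold sums of generators and lattice points of $k\cdot \np(I)$ takes place on a single affine hyperplane per degree, avoiding complications from the Newton polyhedron's unbounded recession cone.
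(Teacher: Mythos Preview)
Your proof is correct and follows essentially the same architecture as the paper's: normality of $\R(I)$ via \cite{HSwanson}, normality of $\cF(I)$ as a semigroup ring, Cohen--Macaulayness from Hochster's theorem, and the singularity statements from standard results on normal semigroup rings. The differences are only in which references are invoked---the paper cites \cite[Corollary~6.2]{HHibi02} for normality of $\cF(I)$ where you unpack the Newton-polytope argument directly, and the paper reaches rational singularities via the direct-summand theorem of Boutot rather than through toric resolutions---but the substance is the same. One small imprecision: your phrase ``is precisely the normality condition on $S_{\cF}$'' overstates things, since what you have shown is the integer decomposition property of $\np(I)$, which in general is stronger than saturation of $S_{\cF}$ in the lattice it generates; however, the implication you actually need (IDP $\Rightarrow$ $K[S_{\cF}]$ normal) holds, so the argument goes through.
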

\begin{proof}
Since $R$ is a normal domain and $I$ is a normal ideal, $\R(I)$ is normal by \cite[Propositions 5.2.1 and 5.2.4]{HSwanson}. Moreover, \cite[Corollary 6.2]{HHibi02} implies that $\cF(I)$ is also normal. Since $\cF(I) = [\R(I)]_{(0,*)}$ is a subring of $\R(I)$ and $\R(I)$ is a subring of the integral domain $R[t]$,  both $\R(I)$ and $\cF(I)$ are integral domains. Their Cohen-Macaulay property now follows from their normality, thanks to a well-known theorem of Hochster (see for instance \cite[Theorem B.6.2]{HHbook}).

By \cite[Proposition 1]{Hochster}, 
the normality of $\cF(I)$ is equivalent to $\cF(I)$ being a direct summand of a polynomial ring. Thus by \cite{Boutot} it follows that $\cF(I)$ has rational singularities if the characteristic of $K$ is zero. If $K$ has positive characteristic  $\cF(I)$ is strongly $F$-regular, and in particular is $F$-rational by \cite{HochsterHuneke}.
\end{proof}

In \Cref{section:Rees} we will investigate the algebraic structure of the Rees ring and special fiber ring of a symmetric strongly shifted ideal, in relation with the geometry of algebraic toric varieties. In particular, \Cref{cor:normalpolytope} will describe interesting geometric consequences of the normality property of $\R(I)$ for a principal Borel sssi $I$. 
In the rest of this section and in \Cref{section:assprimes}, we instead use \Cref{prop:normalRees} to analyze the asymptotic behavior of the ordinary powers of a sssi. 

An ideal $L \subseteq I$ is called a \emph{reduction} of $I$ if $\ov{L} = \ov{I}$; equivalently, if there exists an integer $r$ so that $LI^r = I^{r+1}$. If this is the case, it follows that $LI^k= I^{k+1}$ for every $k \geq r$. Hence, the reductions of an ideal $I$ give information on the growth of powers of $I$.
A reduction $L$ of $I$ is called a \emph{minimal reduction} if it is minimal with respect to inclusion. The smallest integer $r$ so that $LI^r = I^{r+1}$ for a minimal reduction $L$ of $I$ is called the reduction number of $I$ with respect to $L$, and is denoted by $r_L(I)$. The \emph{reduction number} of $I$ is 
\[
  r(I) = \min \{r_L(I) : L \text{ is a minimal reduction of } I \}.
\]
The reduction number of a normal symmetric strongly shifted ideal can be estimated as follows.
\begin{cor}
  Let $I \subseteq R=K[x_1, \ldots, x_n]$ be equigenerated, normal sssi (e.g., $I$ satisfies either one of the assumptions in \Cref{prop:normal}) and assume that $K$ is an infinite field.
  Then $r(I) \leq n-1$, and $r(I)=0$ if $I = \Sss(\{(a, \ldots, a)\})$ for some $a \in \N$.
\end{cor}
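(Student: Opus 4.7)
The plan is to bound $r(I)$ via the Castelnuovo-Mumford regularity of the special fiber ring $\cF(I)$, combining the Cohen-Macaulayness from \Cref{prop:normalRees} with an Ehrhart-theoretic bound from convex geometry.

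First I would dispose of the trivial case: if $I = \Sss(\{(a,\ldots,a)\}) = ((x_1\cdots x_n)^a)$, then $I$ is principal and coincides with its own (and only) minimal reduction, giving $r(I)=0$.

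Assume now $I \neq \Sss(\{(a,\ldots,a)\})$. By \Cref{prop:analyticspread}, $\ell(I)=n$. Since $K$ is infinite and $\cF(I)$ is Cohen-Macaulay of Krull dimension $n$, I can pick a linear homogeneous system of parameters $z_1,\ldots,z_n$ for $\cF(I)$ and lift it to $a_1,\ldots,a_n \in I$ generating a minimal reduction $L$ of $I$. A standard graded Nakayama argument identifies $r_L(I)$ with the top nonvanishing degree of the Artinian quotient $\cF(I)/(z_1,\ldots,z_n)\cF(I)$; because $\cF(I)$ is Cohen-Macaulay the $z_i$ form a regular sequence, and modding out a regular sequence of linear forms preserves Castelnuovo-Mumford regularity, so $r_L(I) = \reg(\cF(I))$.

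It therefore suffices to prove $\reg(\cF(I)) \leq n-1$. Because $I$ is equigenerated and normal, the affine semigroup generated by the exponent vectors of $G(I)$ equals the set of lattice points in the cone over $\np(I)$; this realizes $\cF(I)$, as a graded $K$-algebra, as the Ehrhart ring of the Newton polytope $\np(I)$. By \Cref{cor:uniquefacet}, $\dim \np(I) = n-1$, so Stanley's nonnegativity theorem for the $h^*$-polynomial of a lattice polytope yields $\deg h^*(z) \leq n-1$; since $\cF(I)$ is Cohen-Macaulay, its regularity equals $\deg h^*(z)$. Chaining the inequalities gives $r(I) \leq r_L(I) = \reg(\cF(I)) \leq n-1$.

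The main conceptual hurdle is the identification of $\cF(I)$ with the Ehrhart ring of $\np(I)$: this is the bridge that translates an algebraic bound on the reduction number into a polytopal statement, and it depends crucially on the normality hypothesis on $I$, which is precisely what makes the underlying affine semigroup normal and hence the fiber ring equal to the full Ehrhart ring.
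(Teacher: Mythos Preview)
Your argument is correct, but the paper reaches the same conclusion by a much shorter route. After invoking \Cref{prop:normalRees} to get that the Rees algebra $\R(I)$ is Cohen--Macaulay, the paper simply cites a classical result (attributed to \cite[Theorem~2.3]{JKatz}) asserting that Cohen--Macaulayness of $\R(I)$ forces $r(I) \leq \ell(I)-1$, and then reads off $\ell(I)$ from \Cref{prop:analyticspread}. This handles both cases at once: when $I=\Sss(\{(a,\ldots,a)\})$ one has $\ell(I)=1$, hence $r(I)\leq 0$.

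Your detour through the fiber ring is a genuinely different argument. Instead of using Cohen--Macaulayness of $\R(I)$ and a black-box bound, you use Cohen--Macaulayness of $\cF(I)$ together with the normality of $I$ to identify $\cF(I)$ with the Ehrhart ring of $\np(I)$, and then bound $\reg(\cF(I))$ by Stanley's theorem on the $h^*$-polynomial. What this buys you is a sharper and more explicit statement: since $\cF(I)$ is Cohen--Macaulay, every minimal reduction gives the same reduction number, so in fact $r(I)=\reg(\cF(I))=\deg h^*_{\np(I)}$, tying the reduction number directly to a polytopal invariant. The paper's approach, by contrast, is faster and requires no polyhedral input beyond the analytic spread computation, but yields only the inequality.
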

\begin{proof}
 By \Cref{prop:normalRees} $\R(I)$ is Cohen-Macaulay, whence \cite[Theorem 2.3]{JKatz} implies that $r(I) \leq \ell(I) -1$. The claim now follows from \Cref{prop:analyticspread}. 
\end{proof}


While the reductions of a monomial ideal need not be monomial, in \cite[Proposition 2.1]{Singla} Singla showed that every monomial ideal $I$ admits a unique {\em minimal monomial reduction}, that is, a reduction which is monomial and contains no other monomial reduction of $I$. We next determine the unique monomial reduction of a principal Borel sssi. 

\begin{prop}
\label{prop:minimalmonred}
For  $\l\in P_n$ define the monomial ideal generated by the $\sym_n$-orbit of $x^\l$ as 
\[
L(\l)=\left( \sigma(\l) \mid \sigma\in \sym_n \right).
\]
Then $L=L(\l)$ is the minimal monomial reduction of $\Sss(\{\l\})$ and we have $\Sss(\{\l\})=\overline{L}$. 
\end{prop}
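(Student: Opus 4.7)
The plan is to combine the Newton polyhedral description of integral closure of monomial ideals with the permutohedral computation $\np(\Sss(\{\l\})) = \bP(\l)$ from \Cref{prop:permutohedron} and the normality of principal Borel sssi's established in \Cref{prop:normal}(1). Throughout I will use the standard fact that for a monomial ideal $J$, the integral closure $\overline{J}$ is determined by the Newton polyhedron $NP(J) = \np(J) + \mathbb{R}^n_{\geq 0}$, so in particular $\overline{J_1} = \overline{J_2}$ if and only if $NP(J_1) = NP(J_2)$.

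To prove $\Sss(\{\l\}) = \overline{L(\l)}$ and that $L(\l)$ is a reduction of $\Sss(\{\l\})$, I will compare Newton polyhedra. The containment $L(\l) \subseteq \Sss(\{\l\})$ is immediate from symmetry of the larger ideal. By construction $\np(L(\l)) = \conv\{\sigma(\l) : \sigma \in \sym_n\} = \bP(\l)$, and by \Cref{prop:permutohedron} this also equals $\np(\Sss(\{\l\}))$, giving $NP(L(\l)) = NP(\Sss(\{\l\}))$. Therefore $\overline{L(\l)} = \overline{\Sss(\{\l\})} = \Sss(\{\l\})$, the last equality coming from the normality of $\Sss(\{\l\})$ by \Cref{prop:normal}(1).

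For minimality, I will let $L'$ be any monomial reduction of $\Sss(\{\l\})$ and show $L(\l) \subseteq L'$. Since $\overline{L'} = \overline{\Sss(\{\l\})}$, we obtain $NP(L') = \bP(\l) + \mathbb{R}^n_{\geq 0}$. The key step will be identifying the vertices of this polyhedron as precisely the distinct permutations $\sigma(\l)$ for $\sigma \in \sym_n$: because $\bP(\l)$ sits in the hyperplane $\sum_i x_i = |\l|$ while strictly positive elements of $\mathbb{R}^n_{\geq 0}$ increase this sum, a straightforward convex-combination argument shows that extreme points of $NP(L')$ must lie in $\bP(\l)$ and in fact be vertices of $\bP(\l)$; conversely, each vertex of $\bP(\l)$ remains extremal in $NP(L')$ by the same hyperplane rigidity. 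Since vertices of the Newton polyhedron of a monomial ideal are always exponents of minimal monomial generators (the pointedness of $\mathbb{R}^n_{\geq 0}$ ensures any vertex of $\np(L') + \mathbb{R}^n_{\geq 0}$ comes from $\np(L')$), we will conclude that $\sigma(x^\l) \in L'$ for each $\sigma \in \sym_n$, hence $L(\l) \subseteq L'$. The main obstacle will be making this vertex identification rigorous, but one can entirely sidestep it by instead invoking Singla's characterization \cite[Proposition 2.1]{Singla} of the minimal monomial reduction as generated by the monomials whose exponents are vertices of the Newton polyhedron.
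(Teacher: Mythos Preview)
Your proof is correct and follows essentially the same route as the paper: both invoke \Cref{prop:permutohedron} to identify the Newton polytope with the permutohedron $\bP(\l)$, appeal to Singla's characterization of the minimal monomial reduction via the vertices of the Newton polyhedron, and use \Cref{prop:normal}(1) for the equality $\overline{L}=\Sss(\{\l\})$. The only minor difference is in identifying the vertex set of $\bP(\l)$ as $\{\sigma(\l):\sigma\in\sym_n\}$: the paper uses a clean symmetry argument (the vertex set is nonempty, $\sym_n$-invariant, and contained in the single orbit $\{\sigma(\l)\}$, hence equals it), whereas you sketch a direct convexity argument via the supporting hyperplane $\sum_i t_i=|\l|$.
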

\begin{proof}
Let $I=\Sss(\{\l\})$. In \cite[Proposition 2.1 and Remark 1.3]{Singla} it is shown that the exponents for the monomial generators of the minimal monomial reduction of $I$ correspond to the vertices of $NP(I)=\np(I) + \mathbb{R}^n_{\geq 0}$, which coincide with the vertices of $\np(I)$, as the vertices of a Minkowski sum are obtained as sums of vertices from each summand. 
Therefore, we only need to show that $V=\{\sigma(\l)\mid \sigma\in \sym_n\}$ is the set of vertices of $\np(I)$.
The latter form a subset of $V$, because $\np(I)=\bP(\l)$ by \Cref{prop:permutohedron}. Since $V$ forms an orbit under the $\sym_n$ action on $\mathbb{R}^n$ and since $\np(I)$ and hence its vertex set are  $\sym_n$-invariant, we conclude that $V$ coincides with the set of vertices of $\np(I)$. Hence, $L$ is indeed the minimal monomial reduction of $I$.
Now, the fact that $L$ is a reduction of $I$ implies that $\overline{L}=\overline{I}$, while $\overline{I}=I$ follows by \Cref{prop:normal}(1), finishing the proof.
\end{proof}


\section{Associated primes and primary decomposition}
\label{section:assprimes}


When studying the ordinary powers of an ideal $I$, one often wishes to understand how the depths and associated primes of $I^k$ depend on the exponent $k$. In turn, thanks to \eqref{eq:symbolicpowerMin} and \eqref{eq:symbolicpowerAss}, this process sometimes gives information on the symbolic powers of $I$.

For an ideal $I$ in a Noetherian ring, Brodmann \cite{{Bro1},{Bro2}} proved that, for $k \gg 0$, $\Ass(I^k) = \Ass(I^{k+1})$ and $\depth(R/I^k) = \depth(R/I^{k+1})$; also, $\displaystyle{\lim_{k \to \infty} \depth(R/I^k) \leq n - \ell(I)}$. 
The smallest number $k_0$ for which equality holds is called the \emph{index of stability} of $I$ and is denoted as $\astab(I)$; $\Ass(I^{k_0})$ is called the \emph{stable set of associated primes} and is denoted as $\Ass^{\infty}(I)$. Similarly, the smallest integer so that $\depth(R/I^k) =\depth(R/I^{k+1})$ is called the \emph{index of depth stability} of $I$, $\dstab(I)$. 

An ideal $I$ is said to satisfy the \emph{persistence property} if $\Ass(I^k) \subseteq \Ass(I^{k+1})$ for every $k \geq 1$. 

\smallskip

For monomial ideals in a polynomial ring, various combinatorial techniques have been used to study these notions by several authors; see, e.g., \cite{{HRV},{FMS13},{HVladoiu},{HQureshi},{KMafi}}. 
In \cite[Theorem 3.3]{HQureshi}, Herzog and Qureshi provide bounds for the depths of powers of an arbitrary equigenerated monomial ideal $I$ in terms of its linear relation graph $\Gamma = \Gamma(I)$; see \eqref{eq:Gamma} for the definition of this graph. 
%
If $r$ and $s$ denote the number of vertices and the number of connected components of $\Gamma$ respectively, they show that 
\begin{equation}
  \label{eq:depthgraph}
   \depth(R/I^k) \leq n - k - 1  \,\text{ for }\, 1 \leq k \leq r -s,
\end{equation}
as long as $r-s \geq 1$.
Using theirs result and our understanding of the linear relation graph of an equigenerated ssi from \Cref{prop:connected}, we next determine the depths of powers of ideals in this class. 
\begin{prop}
  \label{prop:depth shifted}
   Let $I$ be an equigenerated symmetric shifted ideal of a polynomial ring in $n$ variables. Then, $\depth(R/I^k) \geq \depth(R/I^{k+1})$ for all $k \geq 1$. Moreover
   \begin{enumerate}
   \item if $I = \Sss(\{ (a, \ldots, a)\})$ for some $a \in \N$, then $\depth(R/I^k)=n-1$ for all $k\geq 1$, and $\dstab(I)=1${\rm ;}
   \item otherwise, $\depth(R/I^k) \leq n - k - 1$ for $1 \leq k \leq n-1$ and $\depth(R/I^k)= 0$ for $k \geq n-1$; in particular, $\dstab(I) \leq n-1$ and $\m \in \Ass(I^k)$ for every $k \geq n-1$.
\end{enumerate}
\end{prop}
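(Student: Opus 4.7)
The argument splits naturally along the dichotomy in the statement.

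In case (1), $I = \Sss(\{(a,\ldots,a)\}) = (x_1^a \cdots x_n^a)$ is principal, generated by a non-zero-divisor of $R$, so $I^k$ is principal and $R/I^k$ is Cohen-Macaulay of dimension $n-1$ for every $k \geq 1$. Hence $\depth(R/I^k) = n - 1$ for all $k$, monotonicity is trivial, and $\dstab(I) = 1$.

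In case (2), \Cref{prop:connected} shows that the linear relation graph $\Gamma(I)$ is connected with vertex set $\{1,\ldots,n\}$, so in Herzog-Qureshi's notation $r = n$ and $s = 1$. The bound \eqref{eq:depthgraph} immediately yields $\depth(R/I^k) \leq n - k - 1$ for $1 \leq k \leq n - 1$. At $k = n-1$ this forces $\depth(R/I^{n-1}) = 0$, equivalently $\m \in \Ass(I^{n-1})$.

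To upgrade this to $\m \in \Ass(I^k)$ for all $k \geq n - 1$, my plan is to prove persistence of $\m$ as an associated prime once it appears. Pick a monomial $u \notin I^{n-1}$ with $\m u \subseteq I^{n-1}$ (such a $u$ exists because $\m \in \Ass(I^{n-1})$). For each $k \geq n - 1$ I would construct a monomial $v \in I^{k-(n-1)}$ with $uv \notin I^k$; then $\m (uv) \subseteq I^{n-1} \cdot I^{k-(n-1)} = I^k$ forces $\m \in \Ass(I^k)$ and hence $\depth(R/I^k) = 0$. The combinatorial content lies in producing such a $v$; I would exploit the symmetry of $I$ and a minimal-degree choice of $u$ (so that $u$ avoids lying in $I^{n-1}$ by a narrow margin that is preserved upon multiplying by a carefully chosen element of $I^{k-(n-1)}$).

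Finally, for the monotonicity assertion $\depth(R/I^k) \geq \depth(R/I^{k+1})$, case (1) is trivial and in case (2) I would derive it from the linear resolution property of equigenerated symmetric shifted ideals \cite[Theorem 3.2]{BDGMNORS}. For sssi's this is cleanest, since by \Cref{cor:powers} each power $I^k$ is again equigenerated and symmetric strongly shifted, hence has a linear resolution, and the depth monotonicity should then follow from an Auslander-Buchsbaum comparison across successive powers together with a controlled growth of the linear relation graph $\Gamma(I^k)$. For ssi's that are not strongly shifted, the analogous statement for powers is open (\Cref{quest:powers shifted}), so this is where I expect the main obstacle: one either needs an independent proof that $I^k$ has a linear resolution in this more general setting, or a depth-monotonicity argument for equigenerated ssi's that bypasses the powers question entirely.
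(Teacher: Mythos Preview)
Your case (1) argument is correct and even cleaner than the paper's (which computes $\pd(R/I^k)$ via \Cref{prop:pd}). Your start on case (2) --- invoking \Cref{prop:connected} and \eqref{eq:depthgraph} to get $\depth(R/I^k)\le n-k-1$ for $1\le k\le n-1$ --- matches the paper exactly.

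The gap is in the ordering of your argument and in the unfinished persistence step. You try to push $\m\in\Ass(I^{n-1})$ forward to all $k\ge n-1$ by constructing a witness $uv$ with $uv\notin I^k$ and $\m\,uv\subseteq I^k$, but you never actually produce the monomial $v$: the phrase ``the combinatorial content lies in producing such a $v$'' is a promissory note, not a proof, and in general there is no reason a minimal-degree socle element of $R/I^{n-1}$ multiplied by an arbitrary element of $I^{k-(n-1)}$ should avoid $I^k$. The paper sidesteps this entirely by establishing the monotonicity $\depth(R/I^k)\ge\depth(R/I^{k+1})$ \emph{first}, citing \cite[Proposition~2.2]{HRV} together with the linear-resolution result \cite[Theorem~3.2]{BDGMNORS}. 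Once monotonicity is in hand, $\depth(R/I^{n-1})=0$ immediately forces $\depth(R/I^k)=0$ for all $k\ge n-1$, and $\m\in\Ass(I^k)$ follows for free. No ad hoc combinatorial construction is needed.

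Your attempt to re-derive monotonicity from scratch --- via \Cref{cor:powers}, Auslander--Buchsbaum, and growth of $\Gamma(I^k)$ --- is both unnecessary and incomplete. The paper simply invokes \cite[Proposition~2.2]{HRV} as a black box applicable to equigenerated symmetric shifted ideals; it does not pass through \Cref{cor:powers} or engage with \Cref{quest:powers shifted} at all. You should look up that proposition and use it directly rather than attempting the detour you sketch.
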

\begin{proof}
Since equigenerated symmetric shifted ideals have a linear free resolution by \cite[Theorem 3.2]{BDGMNORS}, it follows from \cite[Proposition 2.2]{HRV} that 
\begin{equation}
 \label{eq:decreasingdepth}
   \depth(R/I^k) \geq \depth(R/I^{k+1}) \text{ for all } k \geq 1.  
\end{equation}

If $I = \Sss(\{ (a, \ldots, a)\})$ for some $a \in \N$, then for all $k\geq 1$ one has $I = \Sss(\{ (ka, \ldots, ka)\})$ by \Cref{prop:Borelsum}(2). Hence,  for all $k\geq 1$, $\depth(R/I^k)= n - \pd(R/I^k) = n-1$, where the latter equality follows from \Cref{prop:pd}. This implies that $\dstab(I)=1$.

Assume now that $I \neq \Sss(\{ (a, \ldots, a)\})$ for any $a \in \N$. Then, by \Cref{prop:connected} the linear relation graph $\Gamma$ of $I$ is connected, with $V(\Gamma) = \{ 1, \ldots, n \}$. Thus, \eqref{eq:depthgraph} implies that $\depth(R/I^k) \leq n - k - 1$ for $1 \leq k \leq n-1$. In particular, $\depth(R/I^{n-1})=0$, whence from \eqref{eq:decreasingdepth} we obtain that $\depth(R/I^k)=0$ for all $k \geq n-1$. The remaining claims now follow immediately (see also \cite[Corollary 3.4]{HQureshi}).
\end{proof}

With a different method, in \cite[Theorem 1.3]{HQureshi} Herzog and Qureshi also prove that an equigenerated graded ideal $I$ satisfies the persistence property if $I^{k+1} : I = I^k$ for all $k \geq 1$. By work of Ratliff \cite[Proposition 4.7]{Ratliff} the latter condition is satisfied whenever $I$ is a normal ideal.   \Cref{prop:normal} then implies the following result for symmetric strongly shifted ideals that either have exactly one partition Borel generators, or are the symmetrization of a lex-segment ideal in the sense of \Cref{thm:sshifted vs sstable}.

\begin{prop}
   \label{prop:persistence}
     Let $I=\Sss(B)$ be an equigenerated, normal symmetric strongly shifted ideal (e.g. $I$ satisfies one of the assumptions in \Cref{prop:normal}). Then 
     \begin{enumerate}
        \item For every $k \geq 1$, $I^{k+1} : I = I^k$.
        \item For every $k \geq 1$, $\Ass(I^k) \subseteq \Ass(I^{k+1})$ for every $k \geq 1$. That is, $I$ satisfies the persistence property.
       \item $\displaystyle{\lim_{k \to \infty} \depth(R/I^k) = n - \ell(I) = 
     \begin{cases}
     0 & \text{ if } \l\neq  (a, \ldots, a) \text{ for any } a\in\N,\\
     n-1 & \text{ if } \l= (a, \ldots, a) \text{ for some } a\in\N.
    \end{cases}}$
    \end{enumerate}
\end{prop}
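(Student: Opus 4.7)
The proof proposal is essentially a chain of citations to results already assembled in the paper, together with one well-known fact from the theory of Rees algebras.

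For part (1), the plan is a one-line invocation of Ratliff's result: since $I$ is normal by hypothesis, \cite[Proposition 4.7]{Ratliff} yields $I^{k+1}:I = I^k$ for every $k\geq 1$. This is exactly the statement already referenced in the paragraph introducing the proposition.

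For part (2), I would appeal to \cite[Theorem 1.3]{HQureshi} of Herzog--Qureshi, which states that an equigenerated graded ideal $I$ has the persistence property as soon as $I^{k+1}:I = I^k$ for all $k\geq 1$. Since $I$ is equigenerated by hypothesis and the hypothesis of their theorem is provided by part (1), the persistence property $\Ass(I^k)\subseteq \Ass(I^{k+1})$ follows at once.

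For part (3), Brodmann's theorem \cite{Bro1} supplies the upper bound $\displaystyle\lim_{k\to\infty}\depth(R/I^k)\leq n-\ell(I)$. For the reverse inequality, I would use that $\R(I)$ is Cohen--Macaulay by \Cref{prop:normalRees}: a standard result (see, e.g., the argument in \cite[Proposition 10.3.2 and Corollary 10.3.3]{HHbook}, attributed to Eisenbud--Huneke) asserts that whenever the Rees algebra of an equigenerated ideal $I$ is Cohen--Macaulay, the asymptotic depth $\lim_{k\to\infty}\depth(R/I^k)$ equals $n-\ell(I)$, so equality holds in Brodmann's formula. The explicit value of the limit is then read off from our computation of the analytic spread in \Cref{prop:analyticspread}: $\ell(I)=1$ exactly when $I=\Sss(\{(a,\ldots,a)\})$ for some $a\in\N$, in which case the limit is $n-1$, and otherwise $\ell(I)=n$, giving limit $0$.

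The main obstacle, such as it is, lies in part (3): one must be sure that the ``Cohen--Macaulay Rees algebra implies asymptotic depth equals $n-\ell(I)$'' statement is cited in the form we need for an arbitrary normal (equivalently Cohen--Macaulay Rees) ideal of a polynomial ring, rather than for the narrower setting of ideals generated in a single degree in a local ring. Once the appropriate reference is pinned down, the three parts of the proposition follow in a direct sequence with no additional computation required.
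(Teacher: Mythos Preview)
Your proposal is correct and follows essentially the same route as the paper: Ratliff for (1), Herzog--Qureshi's persistence criterion for (2), and Cohen--Macaulayness of $\R(I)$ from \Cref{prop:normalRees} together with Huneke's result (as recorded in \cite[Proposition 10.3.2]{HHbook}) plus \Cref{prop:analyticspread} for (3). The only difference is cosmetic: the paper does not pause over the applicability of the Huneke/Eisenbud--Huneke result, simply citing \cite[Proposition 10.3.2]{HHbook} directly.
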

 \begin{proof}
   Since $I$ is a normal ideal, \cite[Proposition 4.7]{Ratliff} implies (1), whence \cite[Theorem 1.3]{HQureshi} follows from (2). 
   Moreover, since the Rees ring $\R(I)$ is Cohen-Macaulay by \Cref{prop:normalRees}, a well-known result of Huneke implies that equality holds in Brodmann's inequality $\displaystyle{\lim_{k \to \infty} \depth(R/I^k) \leq n - \ell(I)}$ (see, e.g., \cite[Proposition 10.3.2]{HHbook}). 
  This implies the first equality in (3), whence the second equality follows from \Cref{prop:analyticspread}.
 \end{proof}
 

 While the techniques used to prove \cite[Theorem 1.3]{HQureshi} do not seem to apply to arbitrary equigenerated symmetric strongly shifted ideals, we currently do not know of any sssi failing to satisfy \Cref{prop:persistence}(1). This motivates the following questions. 

\begin{quest}
 Is it true that $I^{k+1} : I = I^k$ for every $k \geq 1$ if $I$ is an arbitrary symmetric strongly shifted ideal? Does any sssi satisfy the persistence property?
\end{quest}

\subsection{Stable associated primes of a principal Borel sssi}
\label{section:stableAss}

When $I$ is a principal Borel sssi, its polymatroidal nature complements the information provided by \Cref{prop:depth shifted} and \Cref{prop:persistence}, allowing to estimate the indices of stability and depth stability of $I$.

 \begin{prop}
   \label{prop:astab polymatroidal}
   Let $I=\Sss(\{\l\})$ be a principal Borel symmetric strongly shifted ideal of a polynomial ring in $n$ variables. Then 
   \begin{enumerate}
      \item $\dstab(I)=1$ if and only if either $\l= (a, \ldots, a)$ for some $a\in\N$,  or $\l\neq  (a, \ldots, a)$ for any $a\in\N$ and $\m=(x_1,\ldots,x_n) \in \Ass(I)$.
      \item $\dstab(I) \leq \astab(I) \leq n-1$, provided $n\geq 2$.
   \end{enumerate}
\end{prop}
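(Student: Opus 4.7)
The proof proceeds by splitting into cases based on whether the partition Borel generator $\l$ is constant, and handling (1) and (2) in parallel. In the easy Case 1 ($\l=(a,\ldots,a)$), $I=(x_1\cdots x_n)^a$ is principal, so every power $I^k=(x_1\cdots x_n)^{ka}$ is principal with $\depth(R/I^k)=n-1$ and $\Ass(I^k)=\{(x_1),\ldots,(x_n)\}$. Thus $\dstab(I)=\astab(I)=1$, which settles both parts when $n\geq 2$.

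In the main Case 2 ($\l\neq(a,\ldots,a)$), \Cref{prop:persistence}(3) gives $\lim_{k\to\infty}\depth(R/I^k)=0$; combined with $\depth(R/I^k)\geq\depth(R/I^{k+1})$ from \Cref{prop:depth shifted}, this identifies $\dstab(I)$ as the smallest $k$ for which $\depth(R/I^k)=0$, equivalently with $\m\in\Ass(I^k)$, proving (1). For $\dstab(I)\leq\astab(I)$, if $\dstab(I)\geq 2$ then $\m\in\Ass(I^{\dstab(I)})\setminus\Ass(I^{\dstab(I)-1})$, so persistence (\Cref{prop:persistence}(2)) forces $\astab(I)\geq\dstab(I)$; if $\dstab(I)=1$ the inequality holds trivially.

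For $\astab(I)\leq n-1$ in Case 2, the $\sym_n$-symmetry of $I^k$ ensures that $\Ass(I^k)$ is a union of $\sym_n$-orbits of monomial primes, so it is encoded by the set of heights $T_k=\{\codim(P):P\in\Ass(I^k)\}\subseteq\{\codim(I),\ldots,n\}$. Persistence makes $T_1\subseteq T_2\subseteq\cdots$ nondecreasing, while \Cref{prop:depth shifted}(2) forces $n\in T_k$ for $k\geq n-1$. When $\codim(I)\geq 2$ (i.e.\ $\l_1=0$), the ambient set has cardinality at most $n-1$; combining this size constraint with the explicit $\sym_n$-invariant primary decomposition of $I^k=\Sss(\{k\l\})$ given by \Cref{prop:intersection symbolic} should pin down when new heights enter $T_k$, yielding $T_{n-1}=T_\infty$ and hence $\astab(I)\leq n-1$.

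When $\codim(I)=1$ (i.e.\ $\l_1>0$), I reduce to the codimension-$\geq 2$ subcase: writing $\l=(\l_1,\ldots,\l_1)+\mu$ with $\mu_1=0$ and $\mu\neq 0$, \Cref{prop:Borelsum}(1) yields $I=u\cdot J$ with $u=(x_1\cdots x_n)^{\l_1}$ and $J=\Sss(\{\mu\})$ a principal Borel sssi of codimension $\geq 2$. Since $u^k$ is a non-zero-divisor in $R$, the short exact sequence
\[
0\to R/J^k\xrightarrow{\cdot u^k}R/I^k\to R/(u^k)\to 0,
\]
combined with the fact that each $(x_i)\in\Min(I)\subseteq\Ass(I^k)$ for all $i$, gives $\Ass(I^k)=\Ass(J^k)\cup\{(x_1),\ldots,(x_n)\}$, whence $\astab(I)=\astab(J)\leq n-1$. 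The main obstacle is the bound $\astab(I)\leq n-1$ in the codimension-$\geq 2$ subcase: persistence alone controls only the number of strict jumps in $\Ass(I^k)$, not the index at which the last one occurs. Exploiting the explicit primary decomposition from \Cref{prop:intersection symbolic} to show that no height past $\codim(I)$ is added to $T_k$ after index $n-1$ is where the real work lies, with the reduction $I=u\cdot J$ transferring this control to the $\codim(I)=1$ case.
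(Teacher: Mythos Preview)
Your treatment of part (1) and of the inequality $\dstab(I)\leq\astab(I)$ is correct and matches the paper's argument. The genuine gap is in your proof of $\astab(I)\leq n-1$: you correctly observe that persistence only bounds the number of strict jumps in the chain $T_1\subseteq T_2\subseteq\cdots$, not the index at which stabilization occurs, and you then write that using \Cref{prop:intersection symbolic} to show $T_{n-1}=T_\infty$ ``is where the real work lies.'' That is an admission that the argument is incomplete, and indeed the primary decomposition alone does not obviously give you this bound without further analysis of the sort carried out in \Cref{thm:irredundant} --- which is considerably more delicate and in any case is proved \emph{after} \Cref{prop:astab polymatroidal} in the paper.

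The paper avoids this difficulty entirely by invoking the polymatroidal structure. Since $I$ is polymatroidal by \Cref{thm:polymatroidal}, one has $\astab(I)<\ell(I)$ by \cite[Theorem~4.1]{HQureshi} (see also \cite[Lemma~2.20]{KMafi}); combined with $\ell(I)=n$ from \Cref{prop:analyticspread}, this immediately yields $\astab(I)\leq n-1$. No case split on $\codim(I)$ and no reduction $I=u\cdot J$ is needed. Your reduction to the $\codim\geq 2$ case via the factorization $I=u\cdot J$ is correct as far as it goes, but it only shifts the unresolved difficulty to $J$ rather than eliminating it.
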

\begin{proof}
From \Cref{prop:depth shifted} it follows that, for all $k\geq 1$,
\begin{equation}
\label{eq:depthsequence}
  \depth(R/I) \geq \depth(R/I^k) \geq \lim_{k \to \infty} \depth(R/I^k). 
\end{equation}
Then,  $\dstab(I)=1$ if and only if $\depth(R/I) = \lim_{k \to \infty} \depth(R/I^k)$.
By \Cref{prop:persistence}(3), the latter equality is equivalent to the condition that $\m \in \Ass(I)$ if $\l\neq  (a, \ldots, a)$ for any $a\in\N$, and 
follows from \Cref{prop:depth shifted}(1) otherwise. 
This proves (1).

To prove (2), assume first that $\l\neq (a, \ldots, a)$ for any $a\in\N$. Then, from the persistence property and \eqref{eq:depthsequence} it follows that
\[
 \dstab(I) = \min \{ k : \depth(R/I^k) =0 \} =  \min \{ k : \m \in \Ass(I^k)  \}.
\]
Hence, \Cref{prop:depth shifted}(2) implies that $\dstab(I) \leq \astab(I) \leq n-1$ (see also \cite[Lemma 2.20]{KMafi} and \cite[Theorem 4.1]{HQureshi}).
Finally, if $\l = (a, \ldots, a)$ for some $a\in\N$, then $I$ is a transversal polymatroidal ideal, whence $\astab(I)=\dstab(I) =1 \leq n-1$ by \cite[Corollaries 4.6 and 4.14]{HRV}.
\end{proof}

The inequality $\dstab(I) \leq \astab(I)$ in \Cref{prop:astab polymatroidal}(2) is remarkable, as for an arbitrary monomial ideal $I$ either of the integers $\astab(I)$ and $\dstab(I)$ might be smaller than the other; see \cite[p.~295]{HRV}. 
Moreover, it is known that $\dstab(I)=\astab(I)$ if $I$ is a transversal polymatroidal ideal \cite[Corollaries 4.6 and 4.14]{HRV}, an ideal of Veronese type \cite[Corollary 5.7]{HRV}, or a polymatroidal ideal with the strong exchange property \cite[Proposition 2.15]{KMafi}. Since these classes include principal Borel ideals and ordinary powers of square-free Veronese ideals, it then makes sense to ask whether $\dstab(I) = \astab(I)$  for all principal Borel symmetric strongly shifted ideals. 

Although this is not true in general (see \Cref{ex:astabNOTdstab} below), in this section we prove that several principal Borel sssi's satisfy $\dstab(I)=\astab(I) =1$. In fact, in \Cref{thm:stableAss} we determine the stable set of associated primes for every principal Borel sssi. As a preliminary result, we prove that every principal Borel sssi can be decomposed as a finite intersection of symbolic powers of square-free sssi's.

\begin{prop}
\label{prop:intersection symbolic}
   Let $\l = (\l_1, \ldots, \l_n)$ be a partition. For $1 \leq j \leq n$, let $P_j = (x_1, \ldots, x_j)$ and denote $a_j =\sum_{i=1}^{j} \l_i$.  Then, the principal Borel sssi $\Sss(\{ \l \})$ can be decomposed as
   \[
  \Sss(\{ \l \}) = \bigcap_{j=1}^n  \left(  \bigcap_{\sigma \in \sym_n} \sigma (P_j)^{a_j} \right) = \bigcap_{j=1}^n I_{n,j}^{(a_j)}.
   \]
\end{prop}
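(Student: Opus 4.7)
The plan is to apply the symmetrization principle of \Cref{thmA} to the classical primary decomposition of the principal Borel ideal $J=\bor(\{x^\l\})$. The second equality in the statement, $\bigcap_{\sigma\in\sym_n}\sigma(P_j)^{a_j}=I_{n,j}^{(a_j)}$ for each fixed $j$, is immediate: by \Cref{rem:stconfig} the $\sym_n$-orbit of $P_j=(x_1,\ldots,x_j)$ is precisely the set of minimal primes of the squarefree Veronese ideal $I_{n,j}$, and since $I_{n,j}$ is radical with no embedded primes, intersecting the $a_j$-th powers of these minimal primes computes the symbolic power $I_{n,j}^{(a_j)}$.

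For the first equality, the key input is the primary decomposition $J=\bigcap_{j=1}^n P_j^{a_j}$ of the principal Borel ideal. Using the characterization of $J$ derived from \Cref{rem:Borelorder}, $x^\alpha\in J$ iff $x^\alpha$ admits a divisor $x^\beta$ with $|\beta|=|\l|$ and $\sum_{k=1}^i\beta_k\geq a_i$ for all $i$. One direction is immediate: if $x^\alpha\in J$ then for each $j$, $\sum_{k=1}^j\alpha_k\geq\sum_{k=1}^j\beta_k\geq a_j$, so $x^\alpha\in P_j^{a_j}$. For the converse, given $\alpha\in\N^n$ with $\sum_{k=1}^j\alpha_k\geq a_j$ for all $j$, the required divisor $\beta\leq\alpha$ is constructed by induction on $n$: set $\beta_n=\min(\alpha_n,\l_n)$ and recurse either on the truncated partition $(\l_1,\ldots,\l_{n-1})$ when $\alpha_n\geq\l_n$, or on the modified partition $(\l_1,\ldots,\l_{n-2},\l_{n-1}+\l_n-\alpha_n)$ when $\alpha_n<\l_n$; in both cases the partial-sum hypothesis is preserved.

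Once the decomposition $J=\bigcap_j P_j^{a_j}$ is in place, applying \Cref{thmA} and commuting intersections yields
\[
\Sss(\{\l\}) \;=\; \bigcap_{\sigma\in\sym_n}\sigma(J) \;=\; \bigcap_{\sigma\in\sym_n}\bigcap_{j=1}^n\sigma(P_j)^{a_j} \;=\; \bigcap_{j=1}^n\bigcap_{\sigma\in\sym_n}\sigma(P_j)^{a_j},
\]
as desired. The main obstacle is the inductive construction of the divisor $\beta$; once this is carried out, the remainder of the argument is a formal manipulation of intersections together with the elementary identification of the $\sym_n$-orbit of $P_j$ with the minimal primes of $I_{n,j}$.
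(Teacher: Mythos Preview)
Your proof is correct and follows the same route as the paper: both apply the symmetrization identity $\Sss(\{\l\})=\bigcap_{\sigma\in\sym_n}\sigma(\bor(\{x^\l\}))$ from \Cref{thm:sshifted vs sstable}, use the primary decomposition $\bor(\{x^\l\})=\bigcap_{j=1}^n P_j^{a_j}$, commute the intersections, and then identify $\bigcap_{\sigma}\sigma(P_j)^{a_j}=I_{n,j}^{(a_j)}$. The only difference is that the paper cites \cite[Proposition~2.7 and Theorem~3.1]{FMS13} for the decomposition of the principal Borel ideal, whereas you supply a direct inductive proof of that decomposition; your construction of the divisor $\beta$ (splitting on whether $\alpha_n\geq\l_n$) is correct and makes the argument self-contained.
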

\begin{proof}
By \Cref{thm:sshifted vs sstable} we know that $\displaystyle{I =  \bigcap_{\sigma \in \sym_n} \sigma(\bor(x^\l))}$. Moreover, from \cite[Proposition 2.7 and Theorem 3.1]{FMS13} it follows that
  \[
 \bor(x^\l) = \prod_{j=1}^n P_j^{\l_j} = \bigcap_{j=1}^n P_j^{a_j},
  \]
 where $\displaystyle{a_j = \sum_{i : P_i \subseteq P_j} \l_i}=\sum_{i=1}^{j} \l_i$. 
  Therefore, we deduce the identity
 \[
 \Sss(\{ \l \}) =  \bigcap_{\sigma \in \sym_n} \sigma(\bor(x^\l)) =  \bigcap_{\sigma \in \sym_n} \sigma \left( \bigcap_{j=1}^n P_j^{a_j} \right) = \bigcap_{j=1}^n  \left(  \bigcap_{\sigma \in \sym_n} \sigma (P_j)^{a_j} \right).
 \] 
 For every $j$ we can rewrite $ \displaystyle{\bigcap_{\sigma \in \sym_n} \sigma (P_j)^{a_j} = I_{n,j}^{(a_j)}}$, which proves the last equality.
\end{proof}

Notice that the first equality in \Cref{prop:intersection symbolic} gives a (possibly redundant) primary decomposition of $I = \Sss(\{ \l \})$, since for every $j$ and every $\sigma \in \sym_n$ the ideals $\sigma (P_j)^{a_j}$ are $P_j$-primary. 
Since the powers of a principal Borel sssi are still principal Borel sssi's, \Cref{prop:Borelsum} and \Cref{prop:intersection symbolic} together imply that, for every integer $k \geq 1$,
\begin{equation}
   \label{eq:primary}
   I^k = \Sss( \{ \l \} ) ^k = \Sss( \{ k\l \} ) = \bigcap_{j=1}^n I_{n,j}^{(ka_j)}.
\end{equation}
In particular, as detailed in the proof of \Cref{thm:irredundant} below, for every $k \geq 1$, we have $\Ass(I^k) \subseteq \{\sigma(P_1), \ldots, \sigma(P_n) : \sigma \in \sym_n \}$, and $\sigma (P_j) \in \Ass(I^k)$ if and only if $I_{n,j}^{(ka_j)}$ is needed in the decomposition given by \eqref{eq:primary}.

\begin{ex}
\label{ex:astabNOTdstab}
Let $I=\Sss(\{1,2,2,4,4\})$. Irredundant primary decompositions of $I$ and $I^2$ computed using Macaulay2 \cite{M2} are the following:
\[ I = I_{5,1}\cap I_{5,2}^{(3)} \cap I_{5,4}^{(9)} \cap I_{5,5}^{(13)}, \] 
\[ I^2 = I_{5,1}^{(2)}\cap I_{5,2}^{(6)} \cap I_{5,3}^{(10)} \cap I_{5,4}^{(18)} \cap I_{5,5}^{(26)}. \]
Thus $\astab(I)=2$ follows from  \eqref{eq:primary} and the persistence property \Cref{prop:persistence}(1). Moreover, $\dstab(I)=1$ by \Cref{prop:persistence}(3), since $\m = I_{5,5} \in \Ass(I)$.
\end{ex}

To determine $\Ass^{\infty}(I)$, $\astab(I)$ and $\dstab(I)$ for an arbitrary principal Borel sssi, one needs to be able to predict which components in \eqref{eq:primary} are irredundant in a systematic way. We identify sufficient conditions in \Cref{thm:irredundant} below. Our proof relies on the following fact from 
 \cite[Proposition  4.1]{BDGMNORS}): for all $j \geq 1$,
\begin{subequations}
 \begin{align} 
  P \left( I_{n,j}^{(ka_j)} \right) & =   \{ \mu \in P_n : \sum_{i=1}^j \mu_i \geq k a_j \},  \label{eq:symbolicpart}\\ 
  \Lambda \left( I_{n,j}^{(ka_j)} \right) & = \{ \mu \in P_n : \sum_{i=1}^j \mu_i = k a_j \text{ and } \mu_i = \mu_j  \text{ for all }  i > j\}. \label{eq:symbolicgen}
  \end{align}
\end{subequations}

\begin{thm}
\label{thm:irredundant}
  Let $I = \Sss(\{ \l \})$ and $j' :=\min(\l)$. Adopt the notation of \Cref{prop:intersection symbolic}.  Then, for a fixed $k \geq 1$,
  \begin{enumerate}[leftmargin=5.5mm]
  \item all components $I_{n,j}^{(ka_j)}$ with $j<j'$ are redundant in \eqref{eq:primary}{\rm;} 
  \item  $I_{n,j'}^{(ka_{j'})}$ is not redundant in \eqref{eq:primary} and moreover $I^{(k)_\Min}=I_{n,j'}^{(k\l_{j'})}${\rm;} 
  \item if $j> j'$ and $\l_{j-1}<\l_j$ then the component $I_{n,j}^{(ka_j)}$ is not redundant in \eqref{eq:primary}{\rm;}        
  \item if $\l_1<\l_j$ and either $k>j(j-1)$ or $\displaystyle{\l_j>\frac{q+j-r}{k}}$ where $k \sum_{i=1}^{j-1}\l_i = (j-1)q+r$ and $0\leq r\leq j-2$,
      then the component $I_{n,j}^{(ka_j)}$ is not redundant in \eqref{eq:primary} if and only if $j\geq j'$.
        \item  if $\l_1=\l_j$ for some $j>1$ then the component $I_{n,j}^{(ka_j)}$ is redundant in \eqref{eq:primary}.
  \end{enumerate}
 
 In particular, for $k \gg 0$ a minimal primary decomposition of $I^k$ is
   \begin{equation}
   \label{eq:primarydecomp}
   I^k=\bigcap_{j \in \mathcal{J}} \bigcap_{\sigma\in \sym_n} \sigma(P_j)^{ka_j},
\end{equation}
   where
   \[
 \mathcal{J}=
\begin{cases}
       \{ j : j \geq j' \}, & \text{ if } j' >1\\
      \{1 \} \cup \{ j : j >1 \text{ and } \l_j\neq \l_1 \} , & \text{ if } j'=1.
      \end{cases}
 \]
  \end{thm}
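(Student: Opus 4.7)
My strategy for all five clauses is to translate non-redundancy of the $j$-th component into a combinatorial statement, using the description from \eqref{eq:symbolicpart} that $P(I_{n,j}^{(ka_j)}) = \{\mu \in P_n : \sum_{i=1}^{j} \mu_i \geq ka_j\}$. Thus $I_{n,j}^{(ka_j)}$ is non-redundant in \eqref{eq:primary} exactly when there exists a partition $\mu \in P_n$ satisfying $\sum_{i=1}^{\tilde j} \mu_i \geq ka_{\tilde j}$ for every $\tilde j \neq j$ together with $\sum_{i=1}^{j} \mu_i < ka_j$. Clause (1) is then immediate, because $j < j'$ forces $a_j = 0$ and hence $I_{n,j}^{(ka_j)} = R$. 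Clause (5) follows because $\lambda_1 = \lambda_j$ forces $\lambda_1 = \cdots = \lambda_j$, so $a_j = j\lambda_1$; the monotonicity $\mu_1 \leq \cdots \leq \mu_j$ of any $\mu \in P(I_{n,1}^{(k\lambda_1)})$ combined with $\mu_1 \geq k\lambda_1$ yields $\sum_{i=1}^{j} \mu_i \geq j\mu_1 \geq ka_j$, proving the containment $I_{n,1}^{(k\lambda_1)} \subseteq I_{n,j}^{(ka_j)}$. For clause (2), I invoke \Cref{prop:radical} and \Cref{prop:height} to conclude $\sqrt{I} = I_{n,j'}$, so the minimal primes of $I^k$ are exactly the distinct $\sigma(P_{j'})$; the primary components $\sigma(P_{j'})^{ka_{j'}}$ in \eqref{eq:primary} are thus the unique components associated with minimal primes in any primary decomposition of $I^k$, which forces their non-redundancy and identifies $I^{(k)_\Min} = I_{n,j'}^{(ka_{j'})} = I_{n,j'}^{(k\lambda_{j'})}$, using $a_{j'} = \lambda_{j'}$.

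The main work lies in clauses (3) and (4), where I must produce the witnessing $\mu$ explicitly. For (3), the assumption $\lambda_{j-1} < \lambda_j$ makes $\mu = k\lambda - e_j + e_{j+1}$ a partition, and a direct calculation shows $\sum_{i=1}^{\tilde j} \mu_i = ka_{\tilde j}$ for every $\tilde j \neq j$ while $\sum_{i=1}^{j} \mu_i = ka_j - 1$. For (4), the equality $\lambda_{j-1} = \lambda_j$ prevents such a local move, so I use a globally-balanced construction: locate an integer $p$ in the interval $[ka_{j-1}/(j-1),\,ka_j/j)$ and set
\[
\mu = (\underbrace{p,\ldots,p}_{j},\, k\lambda_{j+1}+s,\, k\lambda_{j+2}+s,\,\ldots,\, k\lambda_n+s),
\]
where $s = ka_j - jp \geq 1$. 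The hypothesis $\lambda_1 < \lambda_j$ forces $(j-1)\lambda_j - a_{j-1} \geq 1$, so this interval has length at least $k/(j(j-1))$ and therefore contains an integer whenever $k > j(j-1)$; alternatively, a direct check shows that $p = q+1$ lies in the interval precisely when $\lambda_j > (q+j-r)/k$. Because the averages $ka_{\tilde j}/\tilde j$ are non-decreasing in $\tilde j$, the choice $p \geq ka_{j-1}/(j-1)$ guarantees $\sum_{i=1}^{\tilde j} \mu_i = \tilde j p \geq ka_{\tilde j}$ for every $\tilde j < j$, while the uniform shift by $s$ handles all inequalities for $\tilde j > j$; the partition condition at position $j+1$ reduces to $(j+1)p \leq ka_{j+1}$, which follows again from monotonicity of averages.

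The ``only if'' direction of (4) (non-redundant forces $j \geq j'$) is just the contrapositive of (1). To deduce the final statement, I observe that for $k \gg 0$ the inequality $k > j(j-1)$ holds for all $j \leq n$, so clause (4) applies to every $j$ with $\lambda_1 < \lambda_j$. Combining clauses (1)--(5), the redundant indices are precisely those with $j < j'$ together with those having $j > 1$ and $\lambda_j = \lambda_1$, and the remaining indices constitute $\mathcal J$; a brief case analysis on whether $j' > 1$ or $j' = 1$ then yields the two cases in the conclusion. I expect the main technical obstacle to be the construction in (4) when $\lambda_{j-1} = \lambda_j$: the Borel-move trick of (3) fails to produce a partition, and instead one must carefully balance a flattened head against a compensating tail while respecting both the partition order and all partial-sum inequalities, which is precisely what produces the two alternative numerical sufficient conditions in the statement.
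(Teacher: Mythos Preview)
Your proposal is correct and follows essentially the same strategy as the paper: translate (non)redundancy into partial-sum inequalities via \eqref{eq:symbolicpart}, then exhibit explicit witness partitions for (3) and (4) and containments for (1), (2), (5). Your choices of witnesses are mild simplifications of the paper's --- in (3) you shift a single unit from position $j$ to $j+1$ whereas the paper shifts $k(\lambda_j-\lambda_{j-1})$; in (4) you allow any integer $p$ in $[ka_{j-1}/(j-1),\,ka_j/j)$ and argue existence via the interval length, whereas the paper fixes $p=q+1$ throughout and handles the $k>j(j-1)$ case by a separate floor estimate; in (5) you compare with the component $I_{n,1}^{(k\lambda_1)}$ rather than $I_{n,j-1}^{(ka_{j-1})}$. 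Your use of the monotonicity of the averages $a_{\tilde j}/\tilde j$ is a clean replacement for the paper's inequality \eqref{eq:ij}.

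One small gap: in (3) your witness $\mu = k\lambda - e_j + e_{j+1}$ is undefined when $j=n$. This is easily repaired by taking $\mu = k\lambda - e_n$ in that case (then $\mu$ is still a partition since $\lambda_{n-1}<\lambda_n$, the sums $\sum_{i=1}^{\tilde j}\mu_i = ka_{\tilde j}$ hold for $\tilde j<n$, and $\sum_{i=1}^n \mu_i = ka_n-1$), but you should say so explicitly. The analogous boundary case $j=n$ in (4) causes no trouble, as the tail of your $\mu$ is then empty.
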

\begin{proof}
For simplicity of notation, in the rest of the proof we denote $\sum_{s} \l := \sum_{r=1}^{s} \l_r$ for any partition $\l \in P_n$. 

(1)  For $j< j'$ one has $a_j=0$, so the corresponding component $I_{n,j}^{(ka_j)}=R$ of the decomposition \eqref{eq:primary} is redundant. 
  
 (2) Since by \Cref{prop:height} one has $\codim(I)=\min(\l) =j'$, the component $I_{n,j'}^{(ka_{j'})}$ is the intersection of the primary ideals $\sigma(P_{j'}^{ka_j'})$ which belong to minimal primes of $I^k$, that is, $I^{(k)_\Min}=I_{n,j'}^{(ka_{j'})}$. It remains to note that $a_{j'}=\l_{j'}$ by definition.

(3) We now establish irredundancy of the components $I_{n,j}^{(a_j)}$ so that $j\geq j'$ and $\l_{j-1}<\l_j$. This amounts to showing that $\bigcap_{i\neq j} I_{n,i}^{(ka_i)} \not \subseteq I_{n,j}^{(ka_j)}$.
 Equation \eqref{eq:symbolicpart} implies that for $\mu\in P_n$, $ x^\mu \in  I_{n,i}^{(ka_i)} $ if and only if $\sum_i\mu \geq k a_i=k\sum_i \l$. Set 
 \[
 \mu_i=
 \begin{cases}
 k\l_i & \text{ for } 1\leq i<j\\
 k\l_{j-1} & \text{ for } i=j\\
k (\l_{j+1}+\l_j-\l_{j-1})& \text{ for } i=j+1, \text{ provided }j+1\leq n\\
 k\l_i & \text{ for } j+2\leq i\leq n.
 \end{cases}
 \]
 Then $\mu\in P_n$ and $x^\mu\in I_{n,i}^{(ka_i)}$ for each $i\neq j$ since $\sum_i \mu=k\sum_i\l$. By contrast,  we have  $\sum_j\mu<k\sum_j \l $ since $\l_{j-1}<\l_j$ and thus $x^\mu\not \in I_{n,j}^{(ka_j)}$. We conclude that  the component $I_{n,j}^{(ka_j)}$ is not redundant in the decomposition \eqref{eq:primary}.
 
 (4) Now suppose $j > j'$ and in particular $j>1$. We construct $\mu\in P_n$ so that $x^\mu\in \bigcap_{i\neq j} I_{n,i}^{(ka_i)}$.
 By \eqref{eq:symbolicpart}, it must satisfy $\sum_i\mu\geq k\sum_i \l$ for each $i\neq j$. Set 
  \begin{equation}
 \label{eq:j-1}
 k\sum_{j-1} \l=(j-1)q+r \text{\, with \,}0\leq r\leq j-2
 \end{equation}
  and consider the partition given by 
 \begin{equation}
 \label{eq:mu}
 \mu_i=
 \begin{cases}
  q+1& \text{ for } 1\leq i\leq j \\
 N&\text{ for } j+1\leq i\leq n,
 \end{cases}
\end{equation}
where $N\gg0$ is a sufficiently large  integer such that $N\geq q+1$ and  $j(q+1)+N(i-j)\geq k\sum_i\l$ for each $i>j$. 
 By construction this satisfies $\sum_i\mu\geq k\sum_i\l$ for  $i>j$.
%
 If $p\in P_n$ and $a,b\in\N$ with $a\leq b$, then
  \begin{equation}
  \label{eq:ij}
 \sum_{b}p\geq \sum_a p + (b-a)p_a\geq  \sum_a p +\frac{b-a}{a} \sum_a p=\frac{b}{a}\sum_a p.
\end{equation}

 For the $\mu$ defined in \eqref{eq:mu} and for $i<j$ we have
 
  \begin{eqnarray*}
 \sum_i \mu &= & i(q+1) \\
 &= & i\left[\frac{k\sum_{j-1}\l-r}{j-1} + 1\right] \qquad \text{by } \eqref{eq:j-1}\\
 &\geq& k \sum_{i} \l+\frac{i(j-r-1)}{j-1} \qquad \text{by } \eqref{eq:ij} (a=i, b=j-1, p= \l)\\
 &\geq& k \sum_{i} \l  \qquad \text{since } j-r-1\geq 0.
 \end{eqnarray*}
Similarly we have
 \begin{eqnarray*}
 \label{eq:muj}
  \sum_j \mu &=& j(q+1) =\frac{jk\sum_{j-1}\l}{j-1}+ \frac{j(j-r-1)}{j-1}\\
 &=& k\sum_{j-1}\l+k\left[\frac{\sum_{j-1}\l}{j-1}+ \frac{j(j-r-1)}{k(j-1)} \right],
\end{eqnarray*} 
whence 
$ \sum_j \mu \geq k\sum_j\l$ if and only if 
\begin{equation}
\label{eq:asscondition}
\l_j \leq \frac{\sum_{j-1}\l}{j-1}+ \frac{j(j-r-1)}{k(j-1)}
=\frac{q+j-r}{k}.
\end{equation}
Thus, $x^\mu\in I_{n,j}^{(ka_j)}$ if and only if \eqref{eq:asscondition} is satisfied. 

Whenever the inequality  \eqref{eq:asscondition} fails,  we have $\bigcap_{i\neq j} I_{n,i}^{(ka_i)}\not \subseteq I_{n,j}^{(ka_j)}$ and hence  $I_{n,j}^{(ka_j)}$ is needed in \eqref{eq:primary}. 
Assume $k>j(j-1)$. Then we see that     
 \[
\left \lfloor \frac{\sum_{j-1}\l}{j-1}+ \frac{j(j-r-1)}{k(j-1)}\right \rfloor  \leq \frac{\sum_{j-1}\l}{j-1}.
  \]
If \eqref{eq:asscondition} holds, it implies $\l_j\leq \frac{\sum_{j-1}\l}{j-1}$. 
  But this is possible if and only if $\l_1=\cdots=\l_{j-1}=\l_j$, in which case equality holds in \eqref{eq:asscondition}. Since by assumption $\l_1< \l_j$, \eqref{eq:asscondition} must then fail, so $I_{n,j}^{(ka_j)}$ is not redundant. 
    
(5) If $\l_1 = \l_j$ for some $j>1$, then $\l_1=\cdots=\l_{j-1}=\l_j$, whence $ \l_j = \frac{\sum_{j-1}\l}{j-1}$. Let $\mu \in P_n$ be defined as in \eqref{eq:mu}.
If $x^\mu\in  I_{n,j-1}^{(ka_{j-1})}$, \eqref{eq:ij} with $a=j-1,\, b=j, \,p=\mu$ then yields
 \[
  \sum_j \mu\geq \frac{j}{j-1}\sum_{j-1}\mu \geq \frac{j}{j-1} k\sum_{j-1}\l \geq k\sum_j \l.
 \] 
 This shows that $ I_{n,j-1}^{(ka_{j-1})}\subseteq  I_{n,j}^{(a_j)}$ and thus the latter ideal is redundant in \eqref{eq:primary}.
 
 The formula for the primary decomposition follows by substituting $I_{n,j}^{(ka_j)}=\bigcap_{\sigma\in \sym_n} \sigma(P_j)^{ka_j}$  in \eqref{eq:primary} and removing redundant components. In detail, $\sigma(P_j)^{ka_j}$ is irredundant in 
 \begin{equation}
 \label{eq:primarynonmin}
 I^k=\bigcap_{j=1}^n \bigcap_{\sigma\in \sym_n} \sigma(P_j)^{ka_j}
\end{equation}
   if and only if $\sigma(P_j)\in \Ass(I^k)$. As $I^k$ is symmetric, $P_j\in \Ass(I^k)$ if and only if $\sigma(P_j)\in \Ass(I^k)$ for all $\sigma\in \sym_n$, so we see that the primary components $\sigma(P_j)^{ka_j}$ are either all simultaneously redundant in \eqref{eq:primarynonmin}, in which case $I_{n,j}^{(ka_j)}$ is redundant in \eqref{eq:primary}, or simultaneously irredundant in \eqref{eq:primarynonmin}, in which case $I_{n,j}^{(ka_j)}$ is iredundant in \eqref{eq:primary}.
 Formula \eqref{eq:primarydecomp} follows from \eqref{eq:primarynonmin} by means of statements (1)--(5) and the previous considerations provided $k > j (j-1)$ is satisfied for every $j$. 
 \end{proof}

Combining the previous results together, we can determine the stable set of associated primes of a principal Borel sssi.
\begin{thm}
\label{thm:stableAss}
   Let $I = \Sss(\{ \l \})$. Adopt the notation of \Cref{prop:intersection symbolic} where $j' = \min(\l)$.  Then
    \[
    \Ass^{\infty}(I) = 
    \begin{cases}
     \{\sigma(P_{j}): j'\leq j\leq n, \, \sigma \in \sym_n \} , & \text{ if } j'>1,\\
      \{\sigma(P_{j}) : j=1\text{ or } (j>1 \text{ and } \l_j\neq \l_1),  \, \sigma \in \sym_n \} , & \text{ if } j'=1.
      \end{cases}
    \]
    Moreover, 
    \begin{enumerate}
    \item if either $\l = (a, \ldots, a)$ for some $a \in \N$, or $\l\in P_n$ has no repeated parts other than possibly allowing for repetitions of $\l_1$, then $\astab(I)=\dstab(I)=1$ and for each $k\in \N$, $I^k= I^{(k)_{\Ass}}${\rm;}
    \item otherwise, setting $s=\max\{j:\l_1<\l_{j-1}=\l_j\}$ we have $\dstab(I) \leq \astab(I) \leq \min \{ n-1, s(s-1)+1 \}$.
    \end{enumerate}
\end{thm}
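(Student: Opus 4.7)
The plan is to deduce both halves of the statement directly from the irredundant primary decomposition \eqref{eq:primarydecomp} established in \Cref{thm:irredundant}. For the description of $\Ass^\infty(I)$, apply \Cref{thm:irredundant} with $k \gg 0$: the primary decomposition there shows $\Ass(I^k) = \{\sigma(P_j) : j \in \mathcal{J},\ \sigma \in \sym_n\}$, and unraveling the two cases in the definition of $\mathcal{J}$ yields precisely the two displayed formulas for $\Ass^\infty(I)$.

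For claim (1), the strategy is to show that the set $\mathcal{J}$ is \emph{independent of $k$}, so that $\astab(I) = 1$ follows at once. If $\l = (a, \ldots, a)$, then $I = (x_1^a \cdots x_n^a)$ is principal, every $I^k$ is principal, and both $\Ass(I^k)$ and $\depth(R/I^k) = n-1$ are trivially constant in $k$. If instead $\l$ has no repeated parts other than possibly $\l_1$, then for every index $j$ one of the $k$-independent statements (1), (2), (3), (5) of \Cref{thm:irredundant} applies: the hypothesis forces that whenever $\l_{j-1} = \l_j$ one must have $\l_j = \l_1$, which is exactly the regime of part (5); the remaining cases fall under (1), (2), or (3) depending on the position of $j$ relative to $j'$ and whether $\l_{j-1} < \l_j$. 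Hence $\mathcal{J}$, and therefore $\Ass(I^k) = \Ass(I)$, is constant in $k$, and the identity $I^k = I^{(k)_{\Ass}}$ is immediate from the definition of $I^{(k)_{\Ass}}$ as the intersection of primary components over $\Ass(I)$. Finally, $\dstab(I) = 1$ follows from \Cref{prop:astab polymatroidal}(1) after checking that $\m \in \Ass(I)$, i.e.\ that $n \in \mathcal{J}$: this holds because $\l_n > \l_1$ whenever $\l \neq (a, \ldots, a)$, so both descriptions of $\mathcal{J}$ contain $n$.

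For claim (2), the complementary hypothesis ensures that $s = \max\{j : \l_1 < \l_{j-1} = \l_j\}$ is well-defined, and $s$ parametrizes exactly the unique $k$-dependent regime in \Cref{thm:irredundant}, namely part (4): an index $j$ with $\l_1 < \l_{j-1} = \l_j$ becomes irredundant as soon as $k > j(j-1)$. Since every other index is classified $k$-independently by (1)--(3) and (5), the primary decomposition \eqref{eq:primarydecomp} stabilizes for $k > s(s-1)$, yielding $\astab(I) \leq s(s-1)+1$. Combined with the bounds $\dstab(I) \leq \astab(I) \leq n-1$ from \Cref{prop:astab polymatroidal}(2), we conclude $\dstab(I) \leq \astab(I) \leq \min\{n-1,\,s(s-1)+1\}$.

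The main subtlety, rather than a genuine obstacle, is bookkeeping: one must verify that the dichotomy between claims (1) and (2) reflects exactly the dichotomy between the $k$-independent cases (parts (1)--(3), (5)) and the $k$-dependent case (part (4)) of \Cref{thm:irredundant}. Once this pairing is recognized, both claims follow by routine inspection of the set $\mathcal{J}$.
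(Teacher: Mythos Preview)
Your proposal is correct and follows essentially the same approach as the paper: both deduce $\Ass^\infty(I)$ directly from the index set $\mathcal{J}$ in \Cref{thm:irredundant}, handle claim~(1) by observing that under those hypotheses every index $j$ is classified by one of the $k$-independent parts (1)--(3) or (5) of \Cref{thm:irredundant}, and handle claim~(2) by bounding when part~(4) forces stabilization. Your treatment of claim~(2) is in fact more explicit than the paper's: you argue that for $k>s(s-1)$ the \emph{entire} primary decomposition stabilizes (hence $\astab(I)\leq s(s-1)+1$ directly), whereas the paper only records that $\m\in\Ass(I^k)$ for such $k$ and then appeals to persistence and \Cref{prop:astab polymatroidal}(2), leaving the reader to fill in the same stabilization step you spell out.
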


\begin{proof}
The claim regarding the stable set of associated primes follows from the primary decomposition  \eqref{eq:primarydecomp} in \Cref{thm:irredundant}.

For (1), from \Cref{thm:irredundant} (3) and (5) it follows that, in either cases, $\Ass(I) = \Ass(I^k)$ for every $k \geq 1$, whence $\astab(I)=1$. Moreover, 
$\dstab(I)=1$ by \Cref{prop:astab polymatroidal}(1), while the claim about the symbolic powers $I^{(k)_{\Ass}}$ follows from \eqref{eq:saturationAss}.  
%

For (2), notice that $\m \in \Ass(I^k)$ for all $k>s(s-1)$ by \Cref{thm:irredundant} (3) and (4). Hence, the conclusion follows from the persistence property and \Cref{prop:astab polymatroidal}(2).
\end{proof}

\begin{rem}
From \Cref{thm:stableAss} it follows that $\astab(I)=\dstab(I)=1$ for many principal Borel sssi's. While this equality is known to hold for transversal polymatroidal ideals by \cite[Corollaries 4.6 and 4.14]{HRV}, \Cref{ex:notSEPnortransversal} shows that there exist principal Borel sssi's with $\astab(I)=\dstab(I)=1$ which are not transversal. Hence, \Cref{thm:stableAss} produces examples of monomial ideals with $\astab(I)=\dstab(I)=1$ which were not previously included in \cite[Corollaries 4.6 and 4.14]{HRV}.
\end{rem}

\begin{rem}
\label{rem:symbolicAss principalBorelsssi}
It follows from \eqref{eq:saturationAss} that, for an ideal $I$, 
the equalities $I^k = I^{(k)_{\Ass}}$ for $1 \leq k \leq \astab(I)$ imply that $I^k = I^{(k)_{\Ass}}$ for all $k \geq 1$. 

In particular, by \Cref{prop:persistence}(2) and \Cref{prop:astab polymatroidal}(2), a principal Borel symmetric strongly shifted ideal satisfies $I^k = I^{(k)_{\Ass}}$ for all $k \geq 1$ if and only if $I^k = I^{(k)_{\Ass}}$ for $1 \leq k \leq n-1$. While the latter statement is true for every polymatroidal ideal by \cite[Theorem 4.1]{HQureshi}, \Cref{thm:stableAss} shows that for a principal Borel sssi it suffices to check equality of powers and symbolic powers in a potentially smaller range, for $1 \leq k \leq \min \{ n-1, s(s-1)+1 \}$.    
    This addresses a question of Huneke, which is open for arbitrary monomial ideals. 
\end{rem}

In \cite{HVladoiu}, Herzog and Vladoiu define a monomial ideal to be of \emph{intersection type} if it can be decomposed as an intersection of powers of monomial primes. Principal Borel sssi's satisfy this property by \Cref{prop:intersection symbolic}. In fact, in \cite[Proposition 2.1]{HVladoiu} it is shown that every polymatroidal ideal is of intersection type. Exploiting this fact, in \cite[Corollary 3.5]{HVladoiu} the authors show that for every polymatroidal ideal $I$ generated in degree $d$, one has $I^{(dk)_{\Min}} \subseteq I^k $ for any $k \geq 1$. In particular, this applies to principal Borel sssi's. Our contribution below is to strengthen this containment.


\begin{prop}
\label{prop:containment}
 Let $I=\Sss(\l)$ be a principal Borel sssi, with $d=|\l|$ and $c=\min(\l)$. Then, $I^{(m)_{\Min}} \subseteq I^k $  whenever $m/k\geq d/\l_c$.   \end{prop}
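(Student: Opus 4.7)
The plan is to reduce the containment to a combinatorial comparison between the partition set of $I^{(m)_{\Min}}$ and that of $I^k$, using the classical Borel characterization of \Cref{prop:partitions sstable}. The first step is to identify $I^{(m)_{\Min}}$ very explicitly. Since $I$ is a principal Borel sssi and $c=\min(\lambda)=\codim(I)$, \Cref{thm:irredundant}(2) (applied with $m$ in place of $k$) gives $I^{(m)_{\Min}}=I_{n,c}^{(m\lambda_c)}$, where we used $\lambda_1=\cdots=\lambda_{c-1}=0$ to rewrite $a_c=\lambda_c$. Combining this with the description of partition sets of symbolic powers recorded in \eqref{eq:symbolicpart}, I obtain
\[
P(I^{(m)_{\Min}})=\Big\{\mu\in P_n:\ \mu_1+\cdots+\mu_c\geq m\lambda_c\Big\}.
\]

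Next, I would translate the claim $I^{(m)_{\Min}}\subseteq I^k$ into the partition-set inclusion $P(I^{(m)_{\Min}})\subseteq P(I^k)$. By \Cref{prop:Borelsum}(1) one has $I^k=\Sss(\{k\lambda\})$, and \Cref{prop:partitions sstable} identifies $P(I^k)=\{\nu\in P_n: x^\nu\in\bor(\{x^{k\lambda}\})\}$. So under the assumption $m\lambda_c\geq kd$, it suffices to fix an arbitrary $\mu\in P_n$ with $\mu_1+\cdots+\mu_c\geq kd$ and verify that $x^\mu\in\bor(\{x^{k\lambda}\})$.

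The key construction is to exhibit a divisor $x^\alpha$ of $x^\mu$ that already lies in $\bor(\{x^{k\lambda}\})$. Since $\sum_{i=1}^c\mu_i\geq kd$, I can pick nonnegative integers $\alpha_1,\dots,\alpha_c$ with $\alpha_i\leq\mu_i$ and $\sum_{i=1}^c\alpha_i=kd$, and set $\alpha_i=0$ for $i>c$. Then $x^\alpha\mid x^\mu$ and $|\alpha|=kd=|k\lambda|$. Applying the criterion of \Cref{rem:Borelorder} to verify $x^\alpha\prec_B x^{k\lambda}$ is then a direct computation: for $j>c$ one has $\Sigma_j(\alpha)=0\leq \Sigma_j(k\lambda)$, while for $1\leq j\leq c$
\[
\Sigma_j(\alpha)\leq \sum_{i=1}^c\alpha_i=kd=k\sum_{i=c}^n\lambda_i=\Sigma_j(k\lambda),
\]
using $\lambda_i=0$ for $i<c$ in the last equality.

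Therefore $x^\alpha\in\bor(\{x^{k\lambda}\})$, and since Borel ideals are monomial ideals closed under multiplication by $R$, the divisibility $x^\alpha\mid x^\mu$ forces $x^\mu\in\bor(\{x^{k\lambda}\})$; \Cref{prop:partitions sstable} then places $\mu$ in $P(I^k)$, as desired. I do not anticipate a real obstacle: the proof is entirely combinatorial once the reduction is made, and the only substantive observation is that on the minimal-symbolic side the whole weight $|\mu|$ can be concentrated in the first $c$ coordinates, so that $kd$ of it suffices to absorb a full copy of $x^{k\lambda}$ via a classical Borel move inside the first $c$ variables.
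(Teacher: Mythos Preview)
Your argument is correct. Both you and the paper begin by identifying $I^{(m)_{\Min}}=I_{n,c}^{(m\lambda_c)}$ via \Cref{thm:irredundant}(2), but you then diverge in how you handle the target $I^k$. The paper invokes the primary decomposition $I^k=\bigcap_{j\geq c} I_{n,j}^{(ka_j)}$ from \Cref{prop:intersection symbolic} and checks, for each partition generator $\mu\in\Lambda(I_{n,c}^{(m\lambda_c)})$, the chain of inequalities $\sum_{i=1}^{j}\mu_i\geq \frac{m\lambda_c j}{c}\geq kd\geq ka_j$ for every $j\geq c$; this uses the special structure $\mu_i=\mu_c$ for $i>c$ of those generators together with \eqref{eq:symbolicpart}. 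You instead describe $I^k=\Sss(\{k\lambda\})$ through the Borel-closure criterion of \Cref{prop:partitions sstable} and, for an arbitrary $\mu\in P(I^{(m)_{\Min}})$, manufacture a divisor $x^{\alpha}$ supported in the first $c$ coordinates of total degree $kd$ which visibly satisfies $x^{\alpha}\prec_B x^{k\lambda}$. Your route avoids the intersection formula and the auxiliary bound $\frac{m\lambda_c j}{c}$, at the cost of appealing to \Cref{prop:partitions sstable}; the paper's route stays within the symbolic-power framework of \Cref{section:stableAss} and makes the containment into each individual component $I_{n,j}^{(ka_j)}$ explicit.
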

\begin{proof}
Recall from \Cref{thm:irredundant}~(2) that $I^{(m)_{\Min}}=I_{n,c}^{(\l_cm)}$. Let $\mu\in \L(I_{n,c}^{(\l_cm)})$ and assume that  $m/k\geq d/\l_c$, i.e., $\l_cm\geq dk$.
Then, by \eqref{eq:symbolicgen} for each $j\geq c$ we have
\[
\sum_{i=1}^j\mu_i=\sum_{i=1}^c\mu_i+\sum_{i=c+1}^j \mu_i\geq \l_cm+(j-c)\frac{\l_cm}{c}=\frac{\l_cmj}{c}\geq \frac{dkj}{c}\geq dk\geq  \left(\sum_{i=1}^j \l_i\right) k,
\]
since $d=  \sum_{i=1}^n \l_i$.
The above inequality shows in view of  \eqref{eq:symbolicgen} that $I_{n,c}^{(\l_cm)}\subseteq I_{n,j}^{(a_j k)}$ for each $c\leq j\leq n$ and for $a_j=\sum_{i=1}^j \l_i$. It follows by \Cref{prop:intersection symbolic} that $I_{n,c}^{(\l_cm)}\subseteq I^k$.
\end{proof}

The above is particularly relevant to the {\em Containment Problem}, which asks for an ideal $I$ to determine the pairs $m,k$ so that $I^{(m)}\subseteq I^k$. This is an important and well studied problem in commutative algebra, which is open in its full generality. We refer the reader to \cite{{CEHH},{GrifoH},{MNBetancourt}} for known results in the case of monomial ideals.

\subsection{The intersection property of a principal Borel sssi}
\label{section:intersection}


The decomposition formula in \Cref{prop:intersection symbolic} mimics analogous decompositions for principal Borel ideals or transversal polymatroidal ideals; see \cite[Theorem 3.1]{FMS13} and \cite[Corollary 4.10]{HRV}. 
Inspired by these results, in \cite{BrunsConca}, Bruns and Conca defined an ideal $I$ to be $P$-adically closed if 
\[
I=\bigcap_{P\in \Ass(I)} P^{(v(P))}, \text{ where } v(P)=\max\{t : I\subseteq P^{(t)}\}.
\]
Since the associated primes of monomial ideals are generated by a subset of the variables, that is, a regular sequence, 
one obtains that a {\em monomial} ideal $I$ is {\bf $P$-adically closed} if 
\begin{equation}
\label{eq:Padic}
I=\bigcap_{P\in \Ass(I)} P^{v(P)}, \text{ where } v(P)=\max\{t : I\subseteq P^{t}\}.
\end{equation}
It follows from \Cref{prop:intersection symbolic} that principal Borel sssi's are $P$-adically closed, since for each $j$ the exponent $a_j$ coincides with $v(\sigma(P_j))$ for every $\sigma \in \sym_n$. 

We now seek to characterize $P$-adically closed symmetric monomial ideals.
%
%

\begin{prop}
\label{prop:$P$-adically closedisStronglyshifted}
A symmetric monomial ideal is $P$-adically closed if and only if it can be decomposed as 
\[
 I = \bigcap_{i=1}^t I_{n,c_i}^{(v_i)},
\] 
where $v_i=v(x_1,\ldots, x_{c_i})$. Moreover, all such ideals $I$ are  symmetric strongly shifted.

\end{prop}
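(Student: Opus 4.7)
The plan is to exploit the observation that, for any symmetric monomial ideal $I$, both the set $\Ass(I)$ and the function $v(\cdot)$ are $\sym_n$-invariant: applying $\sigma$ to $I \subseteq P^t$ and using $\sigma(I)=I$ yields $v(\sigma(P)) = v(P)$, whence $\Ass(I)$ is $\sym_n$-stable. Since $\sym_n$ acts transitively on the monomial primes of any fixed height $c$, the set $\Ass(I)$ is partitioned into full orbits indexed by the distinct heights $c_1, \ldots, c_t$ appearing in $\Ass(I)$. Combining this with \Cref{rem:stconfig} furnishes the key identity
\[
\bigcap_{\sigma \in \sym_n} \sigma(P_{c})^{v} = I_{n,c}^{(v)}, \qquad P_c = (x_1, \ldots, x_c),
\]
which is the bridge between the $P$-adic decomposition and the claimed one.

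For the forward direction, assume $I$ is symmetric and $P$-adically closed. I would group $\Ass(I)$ into orbits by height, set $v_i = v(P_{c_i})$, and rearrange the $P$-adic formula as
\[
I = \bigcap_{P \in \Ass(I)} P^{v(P)} = \bigcap_{i=1}^t \bigcap_{\sigma \in \sym_n} \sigma(P_{c_i})^{v_i} = \bigcap_{i=1}^t I_{n,c_i}^{(v_i)}.
\]

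For the converse, assume $I = \bigcap_{i=1}^t I_{n,c_i}^{(v_i)}$ with $v_i = v(P_{c_i})$ and distinct $c_i$ (no loss of generality, since $I_{n,c}^{(v)} \cap I_{n,c}^{(v')} = I_{n,c}^{(\max(v,v'))}$). Expanding each factor yields a (possibly redundant) primary decomposition of $I$, so $\Ass(I) \subseteq \{\sigma(P_{c_i}) : 1\le i \le t, \sigma \in \sym_n\}$. By the same symmetry argument, $\Ass(I)$ is $\sym_n$-stable, so for each $i$ either every $\sigma(P_{c_i})$ lies in $\Ass(I)$ or none does; in the latter case the entire component $I_{n,c_i}^{(v_i)}$ is redundant and can be discarded. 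After pruning, using $v(\sigma(P_{c_i})) = v(P_{c_i}) = v_i$, I recover $I = \bigcap_{P \in \Ass(I)} P^{v(P)}$, i.e., $I$ is $P$-adically closed.

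The "moreover" assertion then follows by chaining closure properties already established: $I_{n,c_i}$ is symmetric strongly shifted by \Cref{rem:stconfig}, \Cref{prop:symbolic} promotes each symbolic power $I_{n,c_i}^{(v_i)}$ to an sssi, and \Cref{prop:sum intersection} passes to intersections. I expect the main delicate step to be the pruning of redundant components in the converse: individual primary ideals $\sigma(P_{c_i})^{v_i}$ could a priori be redundant in an arbitrary pattern, but the symmetry of $\Ass(I)$ forces this redundancy to respect orbits, so that entire factors $I_{n,c_i}^{(v_i)}$ must be discarded together rather than piecemeal.
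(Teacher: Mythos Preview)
Your proof is correct and follows essentially the same approach as the paper. The paper organizes the forward direction by first treating the height-unmixed case and then grouping by height, whereas you do the grouping directly; conversely, the paper dismisses the reverse implication as ``clear from the definition'' while you spell out the orbit-wise pruning of redundant components, but the underlying ideas are identical. For the ``moreover'' clause the paper cites \cite[Theorem 4.3]{BDGMNORS} directly for $I_{n,c}^{(v)}$ being an sssi rather than routing through \Cref{prop:symbolic}, but your chain of references is equally valid.
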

\begin{proof}
It is clear from the definition that an ideal admitting such a decomposition is $P$-adically closed. Moreover, since each $I_{n,c_i}^{(v_i)}$ is symmetric strongly shifted by \cite[Theorem 4.3]{BDGMNORS}, then $I$ is symmetric strongly shifted by \Cref{prop:sum intersection}.

To prove the converse, we first show that a $P$-adically closed symmetric monomial ideal $I$ that is height-unmixed  with $\codim(I)=c$ is a symbolic power ideal of a square-free Veronese ideal. This is because $\Ass(I)$ is closed under the action of $\sym_n$ and the monomial primes of height equal to $\codim(I)$ form a single orbit under the action of $\sym_n$. Moreover the symmetry of $I$ yields that for $P=(x_1,\ldots,x_c)$ and each $\sigma\in \sym_n$ 
\[
v(P)=\max\{t : I\subseteq P^{t}\}=\max\{t : I\subseteq \sigma(P)^{t}\}=v(\sigma(P)).
\]
Thus equations \eqref{eq:Padic} and \eqref{eq:stconfig} yield for $I$ P-adically closed, symmetric and height-unmixed
\begin{equation}
\label{eq:unmixed}
I=\bigcap_{\sigma\in \sym_n} \sigma (x_1,\ldots, x_c)^v = \left(\bigcap_{\sigma\in \sym_n} \sigma (x_1,\ldots, x_c) \right)^{(v)}=I_{n,c}^{(v)}
\end{equation}
with $v=v (x_1,\ldots, x_c)$.

Next, suppose that $I$ is a $P$-adically closed symmetric monomial ideal that is not necessarily height-unmixed, with associated primes of distinct heights $c_1, \ldots, c_t$. Observe that for each $i$ the ideal $\displaystyle{\,\bigcap_{P\in \Ass(I) : \,\codim(P)=c_i} P^{v(P)}}$ is a $P$-adically closed symmetric ideal that is height-unmixed. Hence, equation \eqref{eq:unmixed} implies that setting $v_i=v(x_1,\ldots, x_{c_i})$ we obtain
\[
I = \bigcap_{i=1}^t \left( \bigcap_{P\in \Ass(I) : \,\codim(P)=c_i} P^{v(P)} \right) =  \bigcap_{i=1}^t I_{n,c_i}^{(v_i)}.
\]
\end{proof}

Comparing the previous result with \Cref{prop:intersection symbolic}, it would be natural to ask wether a $P$-adically closed sssi must be a principal Borel sssi. However, the following example shows that this is not necessarily true. This is in sharp contrast with the case of strongly stable ideals, which can only be written as intersections of powers of monomial prime ideals if they are principal Borel; see \cite[Proposition 2.8]{HVladoiu}.
\begin{ex}
The ideal $I=I_{5,1}\cap I_{5,3}^{(4)}$ is $P$-adically closed by \Cref{prop:$P$-adically closedisStronglyshifted} but it has partition Borel generating set $B(I)=\{(1,1,3,3,3),(1,2,2,2,2)\}$.
\end{ex}


We conclude this section by stating another interesting consequence of \Cref{prop:intersection symbolic}. Recall that an ideal $I$ is said to be \emph{sequentially Cohen-Macaulay} if there exists a filtration of $R$-modules
\[
D_0 = 0 \subsetneq D_1 \subsetneq \ldots \subsetneq D_s = R/I
\]
so that,  for all $1 \leq i \leq s$, $\dim(D_{i-1}) < \dim(D_{i})$ and the quotient modules $C_i = D_i / D_{i-1}$ are Cohen-Macaulay $R$-modules.
In particular, a Cohen-Macaulay ideal is sequentially Cohen-Macaulay, as it suffices to construct the $D_i$'s by going modulo a maximal regular sequence, one element at a time. 
The following result can be interpreted as a generalization of the fact that ideals of monomial star configurations are Cohen-Macaulay \cite[Proposition 2.9]{GHM}.
\begin{prop}
Let $I$ be a principal Borel sssi. Then, $I$ is sequentially Cohen-Macaulay.
\end{prop}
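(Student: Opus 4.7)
The plan is to build a sequentially Cohen--Macaulay filtration of $R/I$ directly from \Cref{prop:intersection symbolic}, which gives the intersection decomposition
\[
I = \bigcap_{j=1}^{n} I_{n,j}^{(a_j)}, \qquad a_j = \sum_{i=1}^{j} \l_i.
\]
Each $I_{n,j}$ is the Stanley--Reisner ideal of the uniform matroid of rank $n-j$ on $n$ vertices, so by the theorem of Minh--Trung every quotient $R/I_{n,j}^{(a_j)}$ is Cohen--Macaulay of pure dimension $n-j$. The decomposition therefore has the favorable feature that its components are Cohen--Macaulay and have pairwise distinct heights $1,2,\ldots,n$.

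To assemble these pieces into a filtration I would set $W_k := \bigcap_{j=1}^{k} I_{n,j}^{(a_j)}$ for $0 \leq k \leq n$, so that $W_0 = R$ and $W_n = I$. The submodules $W_k/I \subseteq R/I$ form an ascending chain from $0$ to $R/I$, and the successive quotient at step $k$ is
\[
W_{k-1}/W_k \cong (W_{k-1} + I_{n,k}^{(a_k)})/I_{n,k}^{(a_k)},
\]
which embeds as an ideal of the pure-dimensional Cohen--Macaulay ring $R/I_{n,k}^{(a_k)}$. In particular $\dim W_{k-1}/W_k \leq n-k$, and reindexing so these quotients appear in order of increasing dimension gives a filtration with strictly increasing dimensions. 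The proposition will then follow once it is shown that every nonzero $W_{k-1}/W_k$ is Cohen--Macaulay of dimension exactly $n-k$.

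Proving this Cohen--Macaulayness is the crux of the argument, and I would attempt it by induction on $|\l|$ using the factorization from \Cref{prop:Borel=product}. When $\l_1 \geq 1$, one can write $I = (x_1\cdots x_n)^{\l_1}\cdot I'$ where $I' = \Sss(\{(0,\l_2-\l_1,\ldots,\l_n-\l_1)\})$ satisfies $|I'| = |\l| - n\l_1 < |\l|$. Since $(x_1\cdots x_n)^{\l_1}$ is principal and generated by a nonzerodivisor, the short exact sequence
\[
0 \to (x_1\cdots x_n)^{\l_1}/I \to R/I \to R/(x_1\cdots x_n)^{\l_1} \to 0
\]
identifies its kernel with $R/I'$ via multiplication by $(x_1\cdots x_n)^{\l_1}$; the inductive sCM filtration of $R/I'$, whose dimensions are at most $n-2$ since $\codim(I') \geq 2$, combined with the Cohen--Macaulay hypersurface $R/(x_1\cdots x_n)^{\l_1}$ appended at the top, yields the required sCM filtration of $R/I$.

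The main obstacle will be the case $\l_1 = 0$, where the principal factorization fails to reduce $|\l|$. In that situation \Cref{prop:Borel=product} instead produces $I = I_{n,s}^{\l_s}\cdot J'$ with $s = \min(\l) \geq 2$, but since $I_{n,s}^{\l_s}$ is no longer principal the short exact sequence argument has to be modified. I expect the cleanest remedy is to check directly that every filtration quotient $W_{k-1}/W_k$ is a cyclic module of the form $R/K$ with $K$ itself a symbolic power of a monomial star configuration---as happens in small examples, where $W_{k-1}/W_k \cong R/\m$ or $R/I_{n,j'}^{(m)}$---so that its Cohen--Macaulayness is again a consequence of Minh--Trung. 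The matroidal structure of each $I_{n,j}^{(a_j)}$ together with the full $\sym_n$-symmetry of the decomposition is what should force such a clean description.
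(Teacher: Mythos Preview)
Your opening is exactly right: the decomposition $I=\bigcap_j I_{n,j}^{(a_j)}$ from \Cref{prop:intersection symbolic} and the Cohen--Macaulayness of each $R/I_{n,j}^{(a_j)}$ are precisely the ingredients needed. But the filtration argument that follows has a genuine gap. An ideal of a Cohen--Macaulay ring need not be Cohen--Macaulay, so the embedding $W_{k-1}/W_k\hookrightarrow R/I_{n,k}^{(a_k)}$ gives only the dimension bound $\dim W_{k-1}/W_k\le n-k$, not the Cohen--Macaulayness of the quotient. Your inductive reduction for $\l_1\ge 1$ is correct and does bring you down to $\l_1=0$, but that case is the entire content of the statement, and your proposed remedy---that each $W_{k-1}/W_k$ should be cyclic of the form $R/K$ with $K$ itself a symbolic power of a star configuration---is unsupported. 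In general $W_{k-1}/W_k$ is an ideal of $R/I_{n,k}^{(a_k)}$ with many generators, and there is no reason for it to be cyclic; the ``small examples'' you allude to are not representative.

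The paper bypasses all of this by invoking a result of Lin and Shen which says directly that an intersection of symbolic powers of monomial star configuration ideals is sequentially Cohen--Macaulay. Once \Cref{prop:intersection symbolic} is established, the proposition is a one-line citation. What you are attempting is, in effect, to reprove the Lin--Shen theorem from scratch in this special case; that can be done, but it requires controlling how the partial intersection $W_{k-1}$ interacts with $I_{n,k}^{(a_k)}$ (for instance, showing that $W_{k-1}+I_{n,k}^{(a_k)}$ is itself the symbolic power of a star configuration of lower height, so the quotient is a maximal Cohen--Macaulay module over $R/I_{n,k}^{(a_k)}$), and the $\sym_n$-symmetry alone does not deliver this without additional argument.
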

\begin{proof}
  By \Cref{prop:intersection symbolic}, $I$ can be decomposed as an intersection of symbolic powers of squarefreeVeronese ideals (also known as ideals defining monomial star configurations). The conclusion now follows from \cite[Proposition 3.1]{LinShen}.
\end{proof}

\section{Toric ideals and Rees algebras}
  \label{section:Rees}
  
  Given a finite set of monomials $G=\{m_1,\ldots, m_s\}\subset R$, the toric ring of $G$ is the subring $K[G] = K[m_1,\ldots,m_s]$ of $R$. Let $A=K[T_1, \ldots, T_s]$ denote a polynomial ring in $s$ new indeterminates over K and define a surjective homomorphism $\pi : A\twoheadrightarrow K[G]$ by  $\pi(T_i) = m_i$. The {\em toric ideal} of $G$  is $\I_G=\ker(\pi)$. In other words, the toric ideal of $G$ is the defining ideal of the toric ring $K[G]$. If $G=G(I)$ we often extend the terminology by referring to $K[G]$ and $\I_G$ as the toric ring and toric ideal of $I$. 
  
  Toric rings of equigenerated monomial ideals are coordinate rings of {\em projective toric varieties}. Indeed, the Zariski closure $X_G$ for the image of the map $\mathbb{P}^{n-1}\to \mathbb{P}^{s-1}$ given by $(x_1,\ldots, x_n)\mapsto(m_1,\ldots, m_s)$ has defining ideal $\I_G$ and coordinate ring $K[G]$.
  
Toric rings are better understood by considering the blow-up algebras of the ideal $I$.
  Recall that the \emph{Rees ring} of an ideal $I\subset R=K[x_1, \ldots, x_n]$, $\R(I)=\bigoplus_{i\geq 0}I^i t^i$,  is a quotient of a polynomial ring $S=R[T_1, \ldots, T_s]$ under a ring homomorphism $\varphi: S \twoheadrightarrow \mathcal{R}(I)$ given by $T_i\mapsto f_it$, with $f_i$ the $i$-the generator of $I$. 
  The map $\varphi$ factors through $\Sym(I)$, the symmetric algebra of $I$, as indicated in the diagram below. 

\begin{equation}
\label{eq:Rees}
\begin{tikzcd}[column sep=1.5em]
 & \Sym(I) \arrow{dr}{} \\
S \arrow{ur}{\varphi'} \arrow{rr}{\varphi} && \R(I)
\end{tikzcd}
\end{equation}
The \emph{fiber cone}, or \emph{special fiber ring}, of $I$ is the quotient $\cF(I)= \R(I)/(x_1,\ldots x_n) \cong \R(I) \otimes_R K$. If $G=G(I)$ then, in the notation above $A=S\otimes_R K$ and $\pi=\varphi\otimes_R K$, thus we recognize by comparing presentations that  $\cF(I)\cong K[G]$ is the toric ring of $G$.

%
%

From \eqref{eq:Rees} it follows that $\Sym(I)$, $\R(I)$ and $\cF(I)$ can be described as quotients of polynomial rings. Understanding the kernels of the maps $\varphi, \varphi'$ in \eqref{eq:Rees} allows to provide structure theorems for these algebras, identifying a presentation in terms of generators and relations. 

Since  $\K:=\mathrm{Ker}(\varphi) \supseteq \cL:= \mathrm{Ker}(\varphi')$, one always has that the relations defining the symmetric algebra of $I$ are also relations for the Rees algebra. In fact, it turns out that  $\cL$ consists of the elements of $\K$ that are linear in the variables $T_i$, which we write as $\cL = \K_{(*,1)}$ \cite{Vasconcelos}.
Moreover, if we denote $\J := \K\otimes_R K \subseteq K[T_1, \ldots, T_s]$, by construction it is clear that $\J S \subseteq \K$.  Thus, the relations of the fiber cone are also relations for the Rees algebra, whence $\K \supseteq \cL + \J S$.  An ideal $I$ is said to be of {\em fiber type} if equality holds. For $G=G(I)$, we have that $\J=\I_G$ is the toric ideal of $G$.  Every toric ideal is a prime ideal generated by binomials; see e.~g.~\cite[Proposition 10.1.1]{HHbook}. 


\subsection{Fiber type property}

In this section, we prove that every equigenerated symmetric strongly shifted ideal is of fiber type (see \Cref{thm:fibertype}). 
This property constitutes yet another similarity between symmetric strongly shifted ideals and strongly stable ideals, which are also of fiber type by \cite[Theorem 5.1 and Example 4.2]{HHV}.

A key ingredient in our proof is the fact that, for an equigenerated ideal $I$ one can define a grading on $S=R[T_1,\ldots, T_s]$ by setting 
\begin{equation}
\label{eq:grading}
\deg(r)=(\deg_R(r),0) \text{ for } r\in R \text{ and }  \deg(T_i)=(0,1).
\end{equation}
 With this grading, $\cF(I) \cong [\R(I)]_{(0,*)}$ and $\Sym(I)  \cong [\R(I)]_{(*,1)}$. Hence, $I$ is of fiber type if and only if the ideal $\K$ defining the Rees algebra is generated in bidegrees $(0,*)$ and $(*,1)$. 
We also crucially use the fact that equigenerated symmetric shifted ideals have {\em linear resolutions} by \cite[Theorem 3.2]{BDGMNORS}. The following lemma yields more information on their syzygies.

\begin{lem}
\label{lem:syzygy}
Let $I$ be an equigenerated symmetric shifted ideal. The syzygies on $I$ are generated by relations of the form
$
x_iu-x_{u_{\max}}v, 
$
 where $v\prec u \in G(I)$, $x_i\in C(u)$, and $\prec$ is defined in \Cref{defn:order} while $C(u)$ and $u_{\max}$ are defined in  \eqref{eq:C(u)}.
 \end{lem}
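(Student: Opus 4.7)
My plan is to combine the Betti number formula for symmetric shifted ideals from \cite[Theorem 3.1]{BDGMNORS} (recorded as \eqref{eq:beta}) with the fact from \cite[Theorem 3.2]{BDGMNORS} that equigenerated ssi's have a linear resolution. Linearity ensures that every minimal first syzygy is a linear binomial $x_iu-x_jv$ with $u,v\in G(I)$, and \eqref{eq:beta} at $i=1$ gives $\beta_1(I)=\sum_{u\in G(I)}|G(C(u))|$. The strategy is therefore to exhibit exactly this many linearly independent syzygies of the stated form and then conclude they generate the first syzygies.

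The verification that each proposed binomial is a syzygy is the core of the argument. Fix $u=\sigma(x^\lambda)\in G(I)$ and $x_i\in G(C(u))$, and set $v:=x_iu/x_{u_{\max}}$. By definition of $u_{\max}$ the variable $x_{u_{\max}}$ divides $u$, so $v$ is a monomial of the same degree as $u$. Writing $u_{\max}=\sigma(k_0)$, the condition $\lambda_{k_0}=\lambda_n$ is automatic. By \eqref{eq:C(u)} either (i) $i=\sigma(k)$ with $\lambda_k<\lambda_n-1$, or (ii) $i=\sigma(k)$ with $\lambda_k=\lambda_n-1$ and $\sigma(k)<u_{\max}$. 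In case (i) the exponent vector of $\sigma^{-1}(v)$ is $\lambda+e_k-e_{k_0}$, whose multiset of entries coincides with that of $\lambda+e_k-e_n$ because $\lambda_{k_0}=\lambda_n$; hence $\pa(v)\in P(I)$ by the symmetric shifted property applied to $\lambda$. In case (ii) the monomial $v=(i\;u_{\max})\cdot u$ is a permutation of $u$, and so $v\in I$ by $\sym_n$-invariance. In both cases, since $I$ is equigenerated, $v\in G(I)$.

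To compare $v$ and $u$ in the order of \Cref{defn:order}, observe that in case (i) the partition $\pa(v)$ is obtained from $\lambda$ by raising a part smaller than $\lambda_n-1$ and lowering a part equal to $\lambda_n$, which means $\pa(v)\lhd\lambda$ in dominance order, and therefore $\pa(v)<_\lex\lambda$. In case (ii) $\pa(v)=\lambda$, and the leftmost coordinate at which the exponent vectors of $v$ and $u$ differ is position $i=\sigma(k)<u_{\max}$, where the exponent rises from $\lambda_n-1$ to $\lambda_n$. Thus $v\prec u$ in both cases, so $x_iu-x_{u_{\max}}v$ is a bona fide syzygy.

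To conclude, I would verify $R$-linear independence of the $\sum_{u\in G(I)}|G(C(u))|$ syzygies thus produced. In the free module $\bigoplus_{u\in G(I)}R\,e_u\twoheadrightarrow I$, the syzygy associated with $(u,x_i)$ reads $x_ie_u-x_{u_{\max}}e_v$ with $v\prec u$. Extend $\prec$ to any total order and consider a putative $R$-linear dependence: the contributions to the $\prec$-largest $e_{u^*}$ appearing come only from pairs $(u^*,x_i)$ with $x_i\in G(C(u^*))$, giving coefficient $\sum_i r_i\cdot x_i$ on $e_{u^*}$ for some $r_i\in R$. Since the $x_i\in G(C(u^*))$ are distinct variables, the $r_i$ must all vanish, and descending induction on $\prec$ finishes the argument. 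The main obstacle I anticipate is case (i) of the syzygy verification: reconciling a general $(k,k_0)$-exchange with the restricted $(k,n)$ moves allowed by the symmetric shifted property. This is where the specific encoding of $u_{\max}$ through the condition $\lambda_{k_0}=\lambda_n$ plays an indispensable role by equating the two moves at the level of multisets.
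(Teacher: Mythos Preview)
Your counting strategy is sound and genuinely different from the paper's inductive mapping-cone argument, but there is a real error in the independence step that you need to repair.

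\textbf{The gap.} You claim the syzygies $s_{u,i}:=x_ie_u-x_{u_{\max}}e_v$ are $R$-linearly independent, and your proof is that on the $\prec$-maximal $e_{u^*}$ the coefficient $\sum_i r_i x_i$ must vanish, ``since the $x_i$ are distinct variables''. Over $R$ this implication is false: $x_jx_i-x_ix_j=0$ is a nontrivial $R$-linear relation among distinct variables. More fundamentally, $R$-linear independence cannot hold in general: the $s_{u,i}$ live in the free module $\bigoplus_{u\in G(I)} R\,e_u$ of rank $\beta_0(I)$, so at most $\beta_0(I)$ of them can be $R$-independent, whereas you produce $\beta_1(I)=\sum_u|G(C(u))|$ of them and $\beta_1(I)>\beta_0(I)$ typically.

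\textbf{The fix.} What you actually need, and what your leading-term argument does prove, is $K$-linear independence. All the $s_{u,i}$ are homogeneous of degree $d+1$; since the resolution is linear, $(\operatorname{Syz}_1(I))_{d+1}$ has $K$-dimension exactly $\beta_1(I)$. Taking $r_i\in K$ in your argument, the identity $\sum_i r_i x_i=0$ with distinct variables $x_i$ now legitimately forces each $r_i=0$, and your descending induction on $\prec$ gives $K$-linear independence. Having $\beta_1(I)$ independent elements in a $\beta_1(I)$-dimensional space, you conclude they span $(\operatorname{Syz}_1(I))_{d+1}$ and hence generate the syzygy module. With this correction the proof is complete.

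\textbf{Comparison with the paper.} The paper instead orders $G(I)$ by $\prec$, sets $J=(v\in G(I):v\prec u)$, uses $J:(u)=C(u)$, and builds the resolution by iterated mapping cones from $0\to R/C(u)\to R/J\to R/(J+(u))\to 0$. This yields directly that the new syzygies at each step are the $x_iu-x_{u_{\max}}v$, without invoking the Betti-number formula or a separate independence check. Your route is more elementary in spirit (count and match) but leans on \cite[Theorems~3.1 and 3.2]{BDGMNORS} as black boxes; the paper's route is more structural and explains \emph{why} these particular binomials appear.
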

 \begin{proof}
 The proof utilizes the notation in  \eqref{eq:C(u)}. Set $J =(v\in G(I): v \prec u)$ and recall that $J:(u)=C(u)$ by \cite[proof of Theorem 3.2]{BDGMNORS}.
 A resolution for $I$ can be constructed as an (iterated) mapping cone from the resolution of $J$ and that of $C(u)$ utilizing the short exact sequence
 \[
 0\to R/(J:u)\to R/J \to R/(J+(u))\to 0.
 \] 
 In particular, this yields that the relations on $I$ are generated by the relations on $J$ together with the relations of the form
 $x_iu-w$ with $x_i\in C(u)$ and $w\in J$. Take $x_i\in C(u)$ and set $v=ux_i/x_{u_{\max}}$. From the symmetric shifted property of $I$ and the definitions of $C(u)$ and $u_{\max}$ in \eqref{eq:C(u)} one deduces that $v\in I$ and that $v\prec u$. Moreover, since $I$ is assumed equigenerated and $\deg(v)=\deg(u)$ it follows that $v\in G(I)$ and hence $v\in J$. 
 Since every relation $x_iu-w$ as above can be written as 
 \[
 x_iu-w=x_iu-x_{u_{\max}}v+(x_{u_{\max}}v-w),
 \]
 we deduce that the syzygies on $I$ are generated by the syzygies of $J$ and the set of relations $x_iu-x_{u_{\max}}v$.
 The desired conclusion follows by induction on the number of monomial generators of $I$.
 \end{proof}

\begin{thm}
\label{thm:fibertype}
 An equigenerated symmetric strongly shifted ideal is of fiber type. The defining relations of its Rees algebra are generated in bidegrees $(0,*)$ and $(1,1)$ with respect to the grading \eqref{eq:grading}. 
\end{thm}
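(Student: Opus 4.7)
The plan is to handle the two claims separately. First, the bidegree-$(1,1)$ generation of $\cL$ follows from the linear free resolution of $I$: by \cite[Theorem~3.2]{BDGMNORS}, an equigenerated symmetric shifted ideal has a linear resolution, so its first $R$-syzygies on $G(I)$ are concentrated in a single degree $d+1$. Via the factorization~\eqref{eq:Rees}, this means $\cL = \ker(\varphi')$ is generated in bidegree $(1,1)$, with an explicit generating set provided by \Cref{lem:syzygy}: the elements $x_i T_u - x_{u_{\max}} T_v$ with $v \prec u \in G(I)$ and $x_i \in C(u)$.

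For the fiber-type identity $\K = \cL S + \J S$, since $\K$ is a prime binomial ideal it suffices to show that every binomial $F = uT^a - vT^b \in \K$ lies in $\cL S + \J S$, where $u,v$ are monomials in $R$ with $\deg_R u = \deg_R v = k$, $|a|=|b|=m$, $T^a = T_{a_1}\cdots T_{a_m}$, and $u \prod_{l} f_{a_l} = v \prod_{r} f_{b_r}$ in $R$. Using primality of $\K$ and $\K \cap R = 0$, I would factor out $\gcd(u,v)$ and assume $\gcd(u,v)=1$, then induct on $k$. The base case $k=0$ gives $u=v=1$, so $F \in \J S$. For $k \geq 1$, equality of $R$-degrees forces both $u$ and $v$ nontrivial; the aim is to produce $\sigma \in \cL$ and a monomial $\mu \in S$ such that $F - \sigma \mu = x_q \cdot G$ for some variable $x_q \mid v$ and some $G \in R[T]$ of bidegree $(k-1, m)$. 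Primality then forces $G \in \K$, and induction gives $G \in \cL S + \J S$.

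The crux is the following combinatorial step: for any $x_p \mid u$ and $x_q \mid v$ (which exist with $p \neq q$ because $\gcd(u,v)=1$), either (A) some index $l$ satisfies $\deg_q(f_{a_l}) > \deg_p(f_{a_l})$, or (B) some index $r$ satisfies $\deg_p(f_{b_r}) > \deg_q(f_{b_r})$. One proves this by contradiction: summing the negated inequalities over $l$ and $r$ respectively, and using $u \prod f_{a_l} = v \prod f_{b_r}$ together with $x_p \nmid v$ and $x_q \nmid u$, leads to the estimate $\deg_q(v) + \deg_p(u) \leq 0$, contradicting $\deg_p(u), \deg_q(v) \geq 1$. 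In case~(A), the sssi property, applied to $f_{a_l}$ via \Cref{rem:dominance}, yields $f' := x_p f_{a_l}/x_q \in G(I)$, so that $\sigma := x_p T_{a_l} - x_q T_{f'} \in \cL$. Setting $\mu := (u/x_p)\, T^a / T_{a_l}$, a direct computation gives
\[
F - \sigma\mu \;=\; x_q \left[(u/x_p)\, T_{f'} \cdot (T^a / T_{a_l}) \;-\; (v/x_q)\, T^b\right],
\]
which has the desired form. Case~(B) is handled symmetrically, performing the Borel exchange on $f_{b_r}$ and factoring out $x_p$.

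The main obstacle is executing the combinatorial exchange cleanly: one must ensure that the \emph{strongly} shifted property (not merely the shifted property, which would not suffice to justify arbitrary swaps between $x_p$ and $x_q$) precisely yields $f' \in G(I)$, and that the bookkeeping after factoring out $x_q$ produces a binomial strictly lower in the first bidegree coordinate. Once the key alternative (A)/(B) is established, everything else reduces to routine induction.
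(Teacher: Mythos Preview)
Your proof is correct and takes a genuinely different route from the paper's.

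The paper does not induct on the $R$-degree. Instead, it observes that a minimal generator of $\K$ in bidegree $(d,k)$ with $d\geq 1$ yields a minimal $R$-syzygy on the (possibly redundant) generating set $\{f_{i_1}\cdots f_{i_k}\}$ of $I^k$. Since powers of an equigenerated sssi are again equigenerated sssi's (\Cref{cor:powers}), $I^k$ has a linear resolution by \cite[Theorem~3.2]{BDGMNORS}, forcing $d=1$. Then \Cref{lem:syzygy}, applied to $I^k$ rather than to $I$, puts such a generator in the form $x_i T_{i_1}\cdots T_{i_k}-x_{u_{\max}} T_{j_1}\cdots T_{j_k}$ with $u=f_{i_1}\cdots f_{i_k}$ and $x_i\in C(u)$; one then locates a single factor $f_{i_\ell}$ on which the Borel move $x_i/x_{u_{\max}}$ is legal, peels off the corresponding bidegree-$(1,1)$ syzygy, and what remains is already a pure $T$-binomial in $\J S$---no induction is needed.

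Your approach never invokes the linear resolution of $I^k$ or the closure of sssi's under powers. You work directly with an arbitrary binomial in $\K$, use primality of $\K$ (and $\K\cap R=0$) to strip common $R$-factors, and your alternative (A)/(B) manufactures a bidegree-$(1,1)$ syzygy on an individual factor $f_{a_l}$ or $f_{b_r}$, decreasing the $R$-degree by one at each step. The paper's argument is shorter once \Cref{cor:powers} is in hand and immediately identifies the shape of the non-fiber generators; yours is more self-contained and, since it only requires the single-step exchange $f_{a_l}x_p/x_q\in G(I)$ on $I$ itself, would apply to any equigenerated monomial ideal with that exchange property even when nothing is known about the resolutions of its powers. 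One small remark: the fact you need for $f'\in G(I)$ is not literally \Cref{rem:dominance} (which concerns partitions) but its extension to arbitrary monomials in $G(I)$ via the $\sym_n$-symmetry of $I$, exactly as argued in the proof of \Cref{prop:product}.
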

\begin{proof}
Using the notation in \eqref{eq:Rees}, set $\K=\ker(\varphi)$ to be the set of relations of $\R(I)$ and set $\cL=\ker(\varphi')$ to be the set  of relations of $\Sym(I)$. Let $\J$ be the kernel of the map $\varphi\otimes_R K: K[T_1, \ldots, T_s] \twoheadrightarrow \cF(I)$. 
Our goal is to show that $\K=\cL+\J S$. The containment $\cL+\J S\subseteq \K$ being evident, 
we proceed to establish the opposite containment $\K\subseteq \cL+\J S$. 

Since $I$ is a monomial ideal, $\K$ is generated by homogeneous binomials. Consider a minimal generator $f$ for $\K$ of bidegree  $(d,k)$. If $d \neq 0$, $f$ corresponds to a minimal relation of degree $d$ on $I^k$. However, since $I$ is symmetric strongly shifted and equigenerated, the same is true of $I^k$ by \Cref{cor:powers} and thus by \cite[Theorem 3.2]{BDGMNORS} $I^k$ has a linear minimal free resolution. It then follows from the minimality of $f$ that $d=1$. 
Thanks to \Cref{lem:syzygy}, we may also assume that $f$ has the form
\[
f=x_{i}T_{i_1}\cdots T_{i_k}-x_{u_{\max}}T_{j_1}\cdots T_{j_k},
\] 
where $u=\varphi(T_{i_1}\cdots T_{i_k})=f_{i_1}\cdots f_{i_k}$ with $f_{i_\ell}\in G(I)$.  Set 
$v=\varphi(T_{j_1}\cdots T_{j_k})=f_{j_1}\cdots f_{j_k}$ and notice that $\varphi(f)=0$ implies $x_{i}u=x_{u_{\max}}v$. 

By definition of $C(u)$, since $x_i\in C(u)$, the exponent of the variable $x_{u_{\max}}$ in $u$ is larger than the exponent of $x_i$ in $u$. Thus, the same must be true for at least one $f_{i_\ell}$. Set $f_t=f_{i_\ell}x_i/x_{u_{\max}}$. Because $I$ is symmetric strongly shifted we have $f_t\in G(I)$ and $x_iT_{i_\ell}-x_{u_{\max}}T_t\in K$, which yields
\begin{eqnarray}
f &=&x_{i}T_{i_1}\cdots T_{i_k}-x_{u_{\max}}T_{j_1}\cdots T_{j_k} \nonumber \\
&=&\left(x_iT_{i_\ell}-x_{u_{\max}}T_t \right)T_{i_1}\cdots T_{i_{\ell-1}}T_{i_{\ell+1}} \nonumber\\
&&+x_{u_{\max}}\left(T_{i_1}\cdots T_{i_{\ell-1}}T_tT_{i_{\ell+1}}\cdots T_{i_k}-T_{j_1}\cdots T_{j_k}\right) \in \cL+\J S.
\label{eq:fibertype}
\end{eqnarray}
The above equation allows to conclude regarding the fiber type property. It also shows that $\K$ is generated by its elements of bidegrees $(1,1)$ and $(0,*)$.
\end{proof}

\begin{rem}
\label{rem:shiftedRees} 
In the proof of \Cref{thm:fibertype}, the assumption that $I$ is symmetric strongly shifted rather than just symmetric shifted was only used in order to apply \Cref{cor:powers}. This is because we do not currently know whether symmetric (not strongly) shifted ideals are closed under powers. In particular, a positive answer to \Cref{quest:powers shifted} would guarantee that equigenerated symmetric shifted ideals are of fiber type. 

On the contrary, the following example shows that a symmetric shifted ideal which is not equigenerated need not be of fiber type.
\end{rem}

\begin{ex}
Consider the symmetric strongly shifted ideal with $\Lambda(I)=\{(1,1,1), (0,2,2)\}$. (Note that $(1,1,2)\in P(I)$ since $(1,1,1)\in \Lambda(I)$.)  It is given by  
$I=\left(x_{1}x_{2}x_{3},\,x_{2}^{2}x_{3}^{2},\,x_{1}^{2}x_{3}^{2},\,x_{1}^{2}x_{2}^{2} \right)$ and it is not equigenerated.
 Calculations on Macaulay2 \cite{M2} show that the Rees algebra of $I$ has the following minimal presentation
 \[
 \R(I)=\frac{R[T_1, T_2, T_3, T_4]}{\left(x_{2}x_{3}T_{1}-x_{1}T_{2},\,x_{1}x_{3}T_{1}-x_{2}T_{3},\,x_{1}x_{2}T_{1}-x_{3}T_{4},\,x_{3}^{2}T_{1}^{2}-T_{2}T_{3},\,x_{2}^{2}T_{1}^{2}-T_{2}T_{4},\,x_{1}^{2}T_{1}^{2}- T_{3}T_{4}\right)}.
 \]
The last three listed relations of $\R(I)$ demonstrate that $I$ is not of fiber type. In particular, the equigeneration hypothesis is needed in \Cref{thm:fibertype}.
\end{ex}
  

  \subsection{The toric ideal of a principal Borel sssi}
  \label{section:Rees principalBorelsssi}
  In this subsection we focus on Rees algebras and fiber cones of principal Borel symmetric shifted ideals.  By \Cref{thm:polymatroidal} principal Borel sssi's $I$ are polymatroidal ideals and hence enjoy the symmetric exchange property as described in the comments following \Cref{def:polymatroidal ideal}. This property yields for each pair $r=x^u,s=x^v\in G(I)$ with $u_i>v_i$ an index $j$ and so that
  \[
  t:=x^ux_j/x_i\in G(I), w:=x^vx_i/x_j \in G(I), \text{ and thus } rs=tw.
  \]
  In terms of the toric ring of $I$, the last identity yields the identity $ T_rT_s=T_{t}T_{w}$ in $\cF(I)$. Equivalently, the binomial $ T_rT_s-T_{t}T_{w}$, termed a {\em symmetric exchange relation}, belongs to the defining ideal $\J$ of $\cF(I)$.
  
  Our first result shows that the defining ideal of the fiber cone and Rees algebra of a principal Borel sssi is generated by quadrics. In particular the toric ideal of a principal Borel sssi is generated by its exchange relations. 
  
Below we use the notation $\deg_i(m)$ to mean the exponent of $x_i$ in a monomial $m$.

\begin{thm}
\label{thm:quadraticFR}
  Let $I$ be a principal Borel sssi. Then the toric ideal of $G(I)$, also known as  the defining ideal of $\cF(I)$, is generated by quadrics, namely  the symmetric exchange relations
  \begin{equation}
  \label{eq:symexchange}
  T_rT_s-T_{t}T_{w}
\end{equation}
where  $r,s,t,w\in G(I)$ satisfy 
$\deg_i(r)> \deg_i(s), \deg_j(r)< \deg_j(s), t=rx_j/x_i \in G(I)$, and $w=sx_i/x_j \in G(I)$. 

Moreover, the defining ideal of $\R(I)$ is also generated by quadrics, specifically by the exchange relations  in \eqref{eq:symexchange}
together with the  relations
 \begin{equation}
  \label{eq:syzygyrelations}
x_iT_u-x_{u_{\max}}T_v,
\end{equation}
where $v\prec u \in G(I)$ cf.~\Cref{defn:order}, $x_i\in C(u)$, and $C(u)$ and $u_{\max}$ are as in  \eqref{eq:C(u)}.
\end{thm}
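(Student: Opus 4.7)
The plan is to split the statement into two tasks and then combine them. First, the Rees algebra part reduces to the fiber cone part by applying \Cref{thm:fibertype}: the defining ideal $\K$ of $\R(I)$ is generated in bidegrees $(1,1)$ and $(0,*)$, and since $I$ itself is a symmetric shifted ideal (being a principal Borel sssi), \Cref{lem:syzygy} applied to $I$ identifies the bidegree $(1,1)$ generators as precisely the linear relations listed in \eqref{eq:syzygyrelations}. Thus what remains is to show that the toric ideal $\J$, which is the bidegree $(0,*)$ part of $\K$ and coincides with the defining ideal of $\cF(I)$, is generated by the symmetric exchange relations in \eqref{eq:symexchange}.

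For the toric ideal, by \Cref{thm:polymatroidal} the ideal $I$ is polymatroidal, so the symmetric exchange property of \cite[Theorem 4.1]{HHibi02} guarantees that the relations \eqref{eq:symexchange} lie in $\J$; denote by $\J_0$ the ideal they generate. I would prove $\J = \J_0$ by induction on the $T$-degree of a binomial generator $f = T_{r_1}\cdots T_{r_d} - T_{s_1}\cdots T_{s_d}$ of $\J$, where $r_i, s_j \in G(I)$ and $r_1\cdots r_d = s_1\cdots s_d$ as elements of $R$. The base case $d=2$ is tautological. For $d \geq 3$, one may assume $r_i \neq s_j$ for all $i,j$ (otherwise cancel a common factor $T_{r_i} = T_{s_j}$ and apply induction to a shorter binomial). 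Since $r_1 \neq s_1$, there is some $a$ with $\deg_a(r_1) > \deg_a(s_1)$; the symmetric exchange property produces $b$ with $\deg_b(r_1) < \deg_b(s_1)$ and monomials $r_1' = r_1 x_b/x_a \in G(I)$, $s_1' = s_1 x_a/x_b \in G(I)$. The relation $T_{r_1}T_{s_1} - T_{r_1'}T_{s_1'}$ is a generator of $\J_0$, and substituting modulo $\J_0$ allows us to replace the pair $(r_1, s_1)$ in $f$ by $(r_1', s_1')$, producing a new binomial $f' \in \J$ congruent to $f$ modulo $\J_0$.

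The main obstacle is showing that this exchange process terminates, i.e., identifying a well-founded statistic that strictly decreases under each such replacement. A natural candidate is the sum $\sum_{i=1}^n \bigl| \deg_i(r_1) - \deg_i(s_1) \bigr|$, which decreases by $2$ under each exchange, combined with a lexicographic tiebreaker on the multiset $\{r_1, \ldots, r_d\}$ to control subsequent pairs. A potentially cleaner route avoiding this termination issue would exploit the factorization $\Sss(\{\l\}) = \prod_{i=1}^n I_{n,i}^{\l_i - \l_{i-1}}$ from \Cref{prop:Borel=product}, writing each factor $I_{n,i}^{\l_i-\l_{i-1}}$ as a polymatroidal ideal satisfying the strong exchange property (hence with quadratic toric ideal) and using the symmetric group action to align exchange relations across different factors of the product. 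Either way, once the toric ideal is controlled, the Rees statement follows immediately from the fiber-type decomposition described in the first paragraph.
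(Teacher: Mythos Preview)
Your handling of the Rees algebra part is correct and matches the paper: the fiber-type property (\Cref{thm:fibertype}) together with \Cref{lem:syzygy} pins down the bidegree $(1,1)$ generators as the relations \eqref{eq:syzygyrelations}, so everything reduces to the toric ideal $\J$.

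Your direct induction (``Approach A'') on the toric ideal, however, contains a genuine error at the substitution step. The exchange relation $T_{r_1}T_{s_1}-T_{r_1'}T_{s_1'}$ involves the \emph{product} $T_{r_1}T_{s_1}$, but in $f=T_{r_1}\cdots T_{r_d}-T_{s_1}\cdots T_{s_d}$ the factors $T_{r_1}$ and $T_{s_1}$ sit on opposite sides of the minus sign and never multiply. Concretely, with $r_1'=r_1x_b/x_a$ and $s_1'=s_1x_a/x_b$ one has $r_1'r_2\cdots r_d=(x_b/x_a)\prod_i r_i$ while $s_1's_2\cdots s_d=(x_a/x_b)\prod_j s_j$, so your proposed $f'$ does not even lie in $\J$, and $f-f'$ has no reason to lie in $\J_0$. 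The correct inductive scheme performs exchanges \emph{within one side}: one seeks $k\geq 2$ and an exchange $T_{r_1}T_{r_k}-T_{r_1'}T_{r_k'}\in\J_0$ that moves $r_1$ closer to $s_1$; since $r_1'r_k'=r_1r_k$, the new binomial stays in $\J$. Termination is then genuinely delicate because the symmetric exchange property only supplies \emph{some} admissible index, not one of your choosing---this is precisely why \Cref{conj:White} is open in general.

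The paper follows your ``cleaner route'' but makes it precise by invoking an external result. By \Cref{prop:Borel=product} one has $I=\prod_{i} I_{n,i}^{\l_i-\l_{i-1}}$, a product of powers of square-free Veronese ideals, each satisfying the \emph{strong} exchange property by \Cref{prop:SEP}. Since $G(J_1J_2)=G(J_1)G(J_2)$ for polymatroidal ideals, the basis of $I$ is a product of bases with the strong exchange property, and then \cite[Theorem~3.5]{Nicklasson} asserts directly that the toric ideal of such an ideal is generated by the symmetric exchange relations. The strong exchange property of the factors is exactly what circumvents the termination obstacle you identified.
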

\begin{proof}
   It follows from \Cref{prop:Borel=product} that $I = \Sss(\{\l\})=\prod_{i=1}^n I_{n,i}^{\l_i-\l_{i-1}}$ is a product of square-free Veronese ideals. 
   Moreover, \Cref{prop:SEP} implies that each factor is a polymatroidal ideal which satisfies the strong exchange property.
   Now, notice that for any two polymatroidal ideals $J_1$ and $J_2$ with polymatroidal bases $B_1=G(J_1)$ and $B_2=G(J_2)$ the set 
   \[
   B_1 B_2 = \{ b_1 b_2 \, | \,  b_1 \in B_1, b_2 \in B_2\} 
   \]
  is a polymatroidal base for the polymatroidal ideal $J_1 J_2$, i.e., $G(J_1J_2)=B_1B_2$ (see \cite[Theorem 5.3]{ConcaH} and \cite[p.~4]{Nicklasson}). 
  Hence, $I$ is a polymatroidal ideal admitting a polymatroidal basis which is a product of bases with the strong exchange property. By \cite[Theorem 3.5]{Nicklasson}, one then has that the defining ideal of $\cF(I)$ is generated by the symmetric exchange relations. 
  
  The claim on the Rees algebra follows from the fiber type property of $I$, \Cref{thm:fibertype}, and in particular from the computations in equation \eqref{eq:fibertype} in the proof of this result. The fact that the defining ideal of $\R(I)$ is quadratic, without the detailed knowledge of the generators \eqref{eq:syzygyrelations} can also be deduced from  \cite[Theorem 5.2]{Nicklasson}.
   \end{proof}

 Since the principal Borel ideals are the symmetric polymatroidal ideals, we deduce the following result, which answers in the affirmative conjectures of White \cite{White} and Herzog--Hibi \cite{HHibi02} in the special case of symmetric polymatroids.
 \begin{conj}[\cite{White}, \cite{HHibi02}]
 \label{conj:White}
 For a polymatridal ideal $I$, the toric ideal of $G(I)$ is generated by the symmetric exchange relations.
 \end{conj}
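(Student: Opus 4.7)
The plan is to attempt the conjecture by extending the strategy of \Cref{thm:quadraticFR} beyond the symmetric setting. In that proof we used \Cref{prop:Borel=product} to factor any principal Borel sssi as a product of square-free Veronese ideals, each of which satisfies the strong exchange property, and then invoked \cite[Theorem 3.5]{Nicklasson} to conclude that the toric ideal is generated by symmetric exchange binomials. A natural first step is therefore to ask whether every polymatroidal ideal admits an analogous factorization, either literally as a product of polymatroidal ideals with the strong exchange property, or in some weaker sense to which a Nicklasson-type theorem still applies. Work of Conca--Herzog on products of polymatroidal bases suggests this is the most direct route.

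If a factorization of this type cannot be produced, I would turn to a Gröbner-theoretic strategy. By classical results of Sturmfels, it would suffice to exhibit a monomial order under which the toric ideal of $G(I)$ has a quadratic initial ideal; in fact, to exhibit a quadratic Gröbner basis consisting of symmetric exchange binomials. For polymatroid base polytopes the natural candidate is a sorting order modeled on the algebras of Veronese type or on the orders used for algebras of minors. Concretely, one would declare the leading monomial of a product $T_{u_1}T_{u_2}$ to be the lexicographically largest pair $(u,v)$ obtainable from $(u_1,u_2)$ by symmetric exchanges, and then show that non-standard pairs reduce via quadratic exchanges alone.

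The main obstacle is that general polymatroids lack the combinatorial rigidity of their symmetric counterparts. The decomposition approach is obstructed by the existence of polymatroidal bases that are not expressible as products of strong-exchange bases. The Gröbner approach is obstructed by the fact that naive sorting arguments produce higher-degree obstructions in the reduction of non-standard pairs, and the symmetric exchange axiom alone does not ensure these can be straightened out via quadratic moves. Overcoming either difficulty likely requires a genuinely new combinatorial input: perhaps a functorial decomposition of polymatroids into building blocks finer than those currently known, or a group-theoretic symmetrization principle analogous to \Cref{thm:sshifted vs sstable} that allows one to reduce an arbitrary polymatroidal ideal to the symmetric case treated in this paper. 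In view of the longstanding status of the conjecture, a full proof is unlikely to emerge from a mild extension of existing techniques.
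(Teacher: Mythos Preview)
The statement you were asked to prove is \Cref{conj:White}, which is not a theorem of the paper but an \emph{open conjecture} attributed to White and Herzog--Hibi. The paper does not prove it; immediately after stating the conjecture the paper proves only the corollary that it holds for \emph{symmetric} polymatroidal ideals, which follows directly from \Cref{thm:polymatroidal} and \Cref{thm:quadraticFR}. So there is no ``paper's own proof'' to compare against.

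Your proposal is not a proof either, and you correctly recognize this: you sketch two natural strategies (factorization into strong-exchange pieces \`a la Nicklasson, and a sorting-order Gr\"obner basis argument \`a la Sturmfels), identify concrete obstructions to each, and conclude that a full proof is unlikely to follow from mild extensions of existing techniques. That assessment is accurate and aligns with the paper's own remarks following \Cref{thm:quadraticFR}, where the authors note that the conjecture is open even for transversal polymatroidal ideals and that the sortability hypothesis underlying the Gr\"obner approach fails for general principal Borel sssi's. In short, there is no gap to name because there is no claimed proof---your write-up is a reasonable survey of the state of the problem, and the paper makes no stronger claim than the symmetric special case.
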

 
 \begin{cor}
 Every symmetric polymatroidal ideal satisfies \Cref{conj:White}.
 \end{cor}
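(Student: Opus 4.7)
The proof is essentially a direct combination of two results established earlier in the paper. The plan is to observe that for a symmetric polymatroidal ideal $I$, \Cref{thm:polymatroidal} identifies $I$ as a principal Borel sssi, i.e., $I = \Sss(\{\l\})$ for some $\l \in P_n$. Then \Cref{thm:quadraticFR} applies directly: it states that the toric ideal of $G(I)$ for a principal Borel sssi is generated by the symmetric exchange relations of the form $T_rT_s - T_tT_w$, which is exactly what \Cref{conj:White} predicts.

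Concretely, the steps are: first, invoke \Cref{thm:polymatroidal} to translate the hypothesis ``symmetric polymatroidal'' into ``principal Borel sssi''; second, apply \Cref{thm:quadraticFR} to conclude that the toric ideal of the set of monomial generators $G(I)$ is generated by the symmetric exchange binomials. No additional argument or computation is required, since the earlier results were formulated precisely to make this deduction immediate.

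There is no real obstacle in this corollary itself; the work has already been done in establishing \Cref{thm:polymatroidal} and \Cref{thm:quadraticFR}. The only minor care needed is to confirm that the notion of ``symmetric exchange relations'' used in the statement of \Cref{conj:White} matches the description of the generators given in \eqref{eq:symexchange}, which it does by construction (both refer to binomials $T_rT_s - T_tT_w$ where $t, w$ are obtained from $r,s$ via a symmetric exchange of variables $x_i, x_j$). Thus the corollary follows in one line from the cited results.
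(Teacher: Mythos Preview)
Your proposal is correct and matches the paper's own proof, which simply states that the result follows from \Cref{thm:polymatroidal} and \Cref{thm:quadraticFR}.
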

 \begin{proof}
 This follows from \Cref{thm:polymatroidal} and \Cref{thm:quadraticFR}.
 \end{proof}

Although the proof of \Cref{thm:quadraticFR} heavily utilizes the polymatroidal nature of principal Borel sssi's, we do not have any examples of equigenerated symmetric strongly shifted ideals whose fiber cone cannot be generated by quadrics. Therefore we ask:
\begin{quest}
\label{q1}
 Is the toric ideal of any equigenerated symmetric strongly shifted ideal quadratic?
\end{quest}

By contrast, the following example shows that, if $I$ is symmetric shifted but not strongly shifted, the defining ideal of the special fiber ring of $\cF(I)$ may not be generated by quadrics.
\begin{ex}
\label{ex:shiftednotquadratic}
   In $k[x_1, x_2, x_3, x_4]$, the equigenerated symmetric shifted ideal $I$ with 
   \[
   \Lambda (I) = \{ (1,1,2,2), (0,2,2,2), (0,1,2,3) \}
   \]
    is not strongly shifted; see \cite[Example 2.5]{BDGMNORS}. Moreover, Macaulay2 \cite{M2} shows that the defining ideal of  $\cF(I)$ contains 28 minimal cubic relations. 
\end{ex}

While \Cref{conj:White} is open for arbitrary polymatroidal ideals, it is indeed satisfied by several classes of polymatroidal ideals. These include, for instance, polymatroidal ideals satisfying the strong exchange property \cite[Therem 5.3(b)]{HHibi02}, principal Borel ideals \cite{{DeNegri},{HHibi02}}, lattice path polymatroidal ideals \cite[Theorem 2.10]{Schweig2}, and polymatroidal ideals satisfying the so called \emph{one-sided strong exchange property} \cite[Theorem 1.2]{Lu}. A version of  \Cref{conj:White} ``up to saturation" was settled in \cite{LasonMichalek}.

In all of the mentioned cases, the defining ideal of the special fiber ring $\cF(I)$ is in fact generated by a Gr\"{o}bner basis of quadrics. The latter condition is satisfied if the algebra generators of $I$ are {\em sortable} \cite{Sturmfels}, a condition which unfortunately does not necessarily hold for an arbitrary principal Borel sssi. When $\cF(I)$ is generated by a Gr\"{o}bner basis of quadrics, $\cF(I)$ is a Koszul algebra. Recall that a standard graded algebra $A$ over a field $K$ is \emph{Koszul} if the residue class field $A/K$ has a linear $A$-resolution. 

Another class of polymatroidal ideals whose fiber cone $\cF(I)$ is Koszul, is given by transversal polymatroidal ideals. For an ideal of this kind, in \cite[Theorem 3.5]{Conca} Conca proved that $\cF(I)$ is generated by quadratic polynomials, which however need not coincide with the symmetric exchange relations. In fact, to the best of our knowledge \Cref{conj:White} is open for arbitrary transversal polymatroidal ideals (we refer the reader to \cite{LasonMichalek} for a proof for transversal matroidal ideals).

The following result provides classes of principal Borel sssi with Koszul toric ring. 
\begin{cor}
\label{cor:Koszul}
   Let $I = \Sss(\{ \l \})$ be a principal Borel symmetric strongly shifted ideal. Suppose that $\l$ is of one of the following types:
   \begin{enumerate}
    \item $\l= (a, \ldots, a)$ for some $a \neq 0 \in \N$;
    \item $\l= (a^s, b^{n-s})$ for some $a < b \in \N$, $s>0$;
    \item $\l= (a^s, b, c^{n-s-1})$ for some $a < b < c \in \N$, $s>0$.
    \item $\l$ satisfies $\Delta^i(\l)_1\geq 0$ for all $1\leq i\leq n$.
  \end{enumerate}
 Then, the toric ring of $I$, equivalently, the special fiber ring $\cF(I)$ is a Koszul algebra.
 \end{cor}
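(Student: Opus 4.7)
The plan is to treat the four cases separately, reducing each to a structural property of $I$ for which the Koszul property of the toric ring is already established in the literature.

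For cases (1), (2), and (3), I would first invoke \Cref{prop:SEP}, which characterizes precisely the partitions $\l$ for which the principal Borel sssi $I=\Sss(\{\l\})$ satisfies the strong exchange property. Once the strong exchange property is in hand, the toric ring is known to be defined by a quadratic Gröbner basis: this follows from the sortability of the generators, which is the content of the discussion preceding the corollary and is classical (\cite[Theorem 5.3(b)]{HHibi02} together with Sturmfels' sortability criterion \cite{Sturmfels}). Having initial ideal generated by squarefree quadratic monomials implies that $\cF(I)$ is Koszul by a standard deformation/Gröbner-degeneration argument.

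For case (4), I would apply \Cref{prop:transversal}: the hypothesis $\Delta^i(\l)_1 \geq 0$ for all admissible $i$ is one of the equivalent conditions in that proposition for $I$ to be a transversal polymatroidal ideal. Once $I$ is identified as a transversal polymatroidal ideal, the Koszul property of $\cF(I)$ follows directly from Conca's theorem \cite[Theorem 3.5]{Conca}, which asserts that the toric ring of any transversal polymatroidal ideal is Koszul.

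There is essentially no new obstacle here, since all the heavy lifting has been done in \Cref{prop:SEP} and \Cref{prop:transversal}: those results are what allow us to identify explicit families of principal Borel sssi's falling into the two classes of polymatroidal ideals (strong-exchange and transversal) for which Koszulness of the toric ring is already known. The only care needed is to note that cases (1)--(3) and case (4) genuinely correspond to different known Koszul classes — they need not coincide (for instance, the strong exchange case is not always transversal and vice versa), so the proof is naturally a case analysis rather than a single uniform argument. No new computation with exchange relations or Gröbner bases is required beyond citing the ambient theorems.
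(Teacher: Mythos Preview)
Your proposal is correct and follows essentially the same approach as the paper: for cases (1)--(3) invoke \Cref{prop:SEP} to obtain the strong exchange property and then cite \cite[Theorem~5.3(b)]{HHibi02}, and for case (4) invoke \Cref{prop:transversal} to identify $I$ as transversal and then cite \cite[Theorem~3.5]{Conca}. Your additional remarks about sortability and Gr\"obner bases are accurate elaborations but not needed beyond the two cited theorems.
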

 \begin{proof}
    By \Cref{prop:SEP}, if $\l$ is of one of the first three given types, then $I$ satisfies the strong exchange property. The conclusion now follows from \cite[Therem 5.3(b)]{HHibi02}. If $\l$ has the fourth listed property, \Cref{prop:transversal} yields that $\Sss(\{\l\})$ is transversal, so the desired conclusion follows from \cite[Theorem 3.5]{Conca}.
     \end{proof}
     
     
It is known that high Veronese subrings of graded rings are Koszul \cite{Backelin, EisenbudReevesTotaro}. In this vein, we can establish the Koszul property of principal sssi's up to taking a sufficiently high multiple of the partition Borel generator.
 \begin{prop}
\label{prop:Koszuluptomultiple}
Let $\l\in P_n$ be a partition. Then for sufficiently large  integers $k$ the toric ring of the ideal $I= \Sss(\{ k\l \})$ has a quadratic Gr\"obner basis. In particular, the toric ring of $I$, $\cF(I)$, is a Koszul algebra.
\end{prop}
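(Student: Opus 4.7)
The plan is to realize the toric ring of $\Sss(\{k\l\})$ as a Veronese subalgebra of the toric ring of the fixed principal Borel sssi $J := \Sss(\{\l\})$, and then to invoke the classical theorem of Eisenbud--Reeves--Totaro \cite{EisenbudReevesTotaro} that sufficiently high Veronese subalgebras of any standard graded $K$-algebra admit a presentation by a quadratic Gr\"obner basis.

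First I would use \Cref{prop:Borelsum}(1) to rewrite $\Sss(\{k\l\}) = J^k$, reducing the statement to an analysis of the toric ring of $J^k$. Next I would apply \Cref{thm:polymatroidal}, which identifies $J$ as polymatroidal, together with the fact recalled in the proof of \Cref{thm:quadraticFR} (due to Conca--Herzog): for polymatroidal ideals $J_1,J_2$ with polymatroidal bases $B_1, B_2$, the set $B_1 B_2 = \{b_1 b_2 : b_i \in B_i\}$ is the polymatroidal base of $J_1 J_2$, i.e., $G(J_1 J_2) = B_1 B_2$ as sets of monomials. Iterating this identity would yield $G(J^k) = \{ m_{i_1} \cdots m_{i_k} : m_{i_j} \in G(J)\}$, from which I would conclude that $\cF(\Sss(\{k\l\})) = K[G(J^k)]$ is exactly the $k$-th Veronese subalgebra $K[G(J)]^{(k)}$ of the toric ring of $J$, with respect to the grading assigning degree $1$ to each generator.

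Finally I would invoke the Eisenbud--Reeves--Totaro theorem applied to $A := K[G(J)]$: there exists $k_0$ such that for every $k \geq k_0$ the defining ideal of $A^{(k)}$ admits a quadratic Gr\"obner basis. This yields the main statement, and the ``in particular'' Koszul assertion follows because a quadratic Gr\"obner basis presentation forces the Koszul property, recovering the qualitative content of \cite{Backelin}. The only conceptual step is the Veronese identification, and it is made possible precisely by the polymatroidal structure of $J$; no serious obstacle is anticipated. I should note that this argument does not yield an effective value of $k_0$, as it depends on the implicit bounds in \cite{EisenbudReevesTotaro}.
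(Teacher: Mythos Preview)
Your approach is essentially the same as the paper's: identify $\cF(\Sss(\{k\l\}))$ as the $k$-th Veronese subring of $T=\cF(\Sss(\{\l\}))$ and apply Eisenbud--Reeves--Totaro. Two minor remarks: the polymatroidal detour via Conca--Herzog is unnecessary, since for any equigenerated monomial ideal $J$ one has $\cF(J^k)=\bigoplus_{i\geq 0}J^{ki}/\m J^{ki}$, which is the $k$-th Veronese of $\cF(J)$ regardless of whether $G(J^k)$ coincides with the set of $k$-fold products; and contrary to your final sentence, \cite[Theorem 2]{EisenbudReevesTotaro} does give an effective bound, namely the initial ideal is generated in degree at most $\max\{\lceil \reg(T)/k\rceil,2\}$, so $k\geq \reg(T)/2$ suffices---the paper records this.
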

\begin{proof}
Recall that $I= \Sss(\{ k\l \})= \Sss(\{\l \})^k$ and hence $\cF(I)=\bigoplus_{i\geq 0}  \Sss(\{\l \})^{ki}$ is the $k$-th Veronese subring of $ T=\cF(\Sss(\{\l \}))$. It is established in \cite[Theorem 2]{EisenbudReevesTotaro} that the defining ideal $\J$ of $\cF(I)$ has an initial ideal generated in degree $\leq \max\{\lceil \reg(T))/k\rceil, 2\}$, where $ \reg$ denotes the Castelnuovo-Mumford regularity. Therefore the initial ideal of $\J$ has a quadratic Gr\"obner basis whenever $ k\geq \reg(T)/2$.
\end{proof}

While not all principal Borel sssi's satisfy the assumptions of \Cref{cor:Koszul}, we do not currently know of any examples of principal Borel sssi's whose fiber cone are not Koszul. Thus we pose the following question.
\begin{quest}
\label{q2}
 Is the toric ring of any principal Borel symmetric strongly shifted ideal Koszul?
\end{quest}

The answer to this question is object of ongoing work of Kuei-Nuan Lin and Yi-Huang Shen, as we learned via personal communication while this manuscript was being written.

\medskip

We conclude this section by describing the geometry of the toric rings associated to principal Borel sssi's. A convex lattice polytope $\bP$ is said to be {\em normal} or to have the {\em integer decomposition property} if it satisfies the following condition: given any positive integer $d$, every lattice point of the dilation $d\cdot \bP$, can be written as the sum of exactly $d$ lattice points in $\bP$. Let $I$ be the ideal generated  by all monomials with exponents in $\bP$. Normality of $\bP$ is equivalent to $\overline{I^d}=I^d$ for positive integers $d$, hence to $I$ being normal. We thus obtain the following consequence of \Cref{prop:normal}.

\begin{cor}
\label{cor:normalpolytope}
For each $\l\in P_n$ the permutohedron $\bP(\l)$ is a normal polytope.
\end{cor}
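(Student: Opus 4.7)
The plan is to derive the corollary as an immediate translation of Proposition \ref{prop:normal}(1) into the language of convex geometry, using the equivalence between ideal normality and the integer decomposition property recalled in the paragraph preceding the statement. The only piece that requires a brief justification is the identification of the principal Borel sssi $\Sss(\{\l\})$ with the monomial ideal generated by the lattice points of $\bP(\l)$.

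Concretely, I would first invoke Proposition \ref{prop:permutohedron} to write $\bP(\l)=\np(\Sss(\{\l\}))$. Combining Proposition \ref{prop:Borelgens}(1) with Rado's description of $\bP(\l)$ used in the proof of Proposition \ref{prop:permutohedron}, I would observe that a lattice point $\mu\in\bP(\l)\cap\Z^n$ is characterized by $|\mu|=|\l|$ together with the dominance-type inequalities $\mu_{i_k}+\cdots+\mu_{i_n}\leq \l_k+\cdots+\l_n$; but these are exactly the conditions cutting out the exponent vectors of monomials in $G(\Sss(\{\l\}))$, so the lattice points of $\bP(\l)$ coincide with the exponents of minimal generators of $\Sss(\{\l\})$. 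In particular, the monomial ideal $I$ generated by $\{x^\mu : \mu\in\bP(\l)\cap\Z^n\}$, referenced in the preamble to the corollary, is precisely $\Sss(\{\l\})$.

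Once this identification is in place, Proposition \ref{prop:normal}(1) gives $\overline{I^d}=I^d$ for all $d\geq 1$, and the equivalence between ideal normality and the integer decomposition property stated just before the corollary yields that $\bP(\l)$ is a normal polytope. There is no substantive obstacle here, since all the work has been done in assembling the normality of principal Borel sssi's via the symmetrization theorem (Theorem \ref{thmA}) and the known normality of principal Borel strongly stable ideals; the corollary is a packaging of those results.
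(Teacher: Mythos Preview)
Your proposal is correct and follows the same route as the paper's proof, which simply cites \Cref{prop:permutohedron} and \Cref{prop:normal}. You add a useful clarification---identifying the lattice points of $\bP(\l)$ with the exponent vectors of $G(\Sss(\{\l\}))$ via Rado's inequalities---that makes explicit why the ideal referenced in the preamble coincides with $\Sss(\{\l\})$; the paper leaves this implicit.
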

\begin{proof}
This follows from \Cref{prop:permutohedron} and \Cref{prop:normal}.
\end{proof}

 A normal lattice polyhedron $\bP$ uniquely determines a projective toric  variety $X_{\bP}$ by means of its normal fan; see \cite[Definition 2.3.14]{CLO}. We term the toric variety defined by the permutohedron $\bP(\l)$ with respect to the lattice $\Z^n/{\rm span}(1,\ldots, 1)$ the {\em permutohedral toric variety} $X_{\bP(\l)}$. The homogeneous coordinate ring for the image of the projective embedding $X_{\bP(\l)}\hookrightarrow \mathbb{P}^{N}$ given by the  divisor $D_{\bP(\l)}$ is in our notation $K[G(\Sss(\l))]=\cF(\Sss(\l))$ and hence the defining equations of  $X_{\bP(\l)}$ in this embedding are given by the toric ideal of $\Sss(\l)$. Since $\bP(\l)$ is normal, $D_{\bP(\l)}$ is very ample and $X_{\bP}$ is projectively normal. 
 Questions \ref{q1} and \ref{q2} arise naturally for this class of algebraic sets. Our work yields the following answer.
 
 \begin{cor}
 \label{cor:quadraticpermutohedral}
 For any $\l\in P_n$, the defining ideal of the permutohedral toric variety $X_{\bP(\l)}$ is generated by quadratic polynomials. If $\l$ is of one of the types described in \Cref{cor:Koszul}, then $X_{\bP(\l)}$ has a Koszul coordinate ring. 
   \end{cor}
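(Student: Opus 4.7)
The plan is to observe that this corollary follows essentially by assembling three ingredients already established in the paper: the identification of the coordinate ring of $X_{\bP(\l)}$ with the fiber cone $\cF(\Sss(\{\l\}))$, the quadratic generation of the toric ideal of a principal Borel sssi (\Cref{thm:quadraticFR}), and the Koszul criterion from \Cref{cor:Koszul}. So this is a short deduction rather than a new argument.

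First I would invoke the discussion immediately preceding the corollary: since $\bP(\l)$ is normal by \Cref{cor:normalpolytope}, the divisor $D_{\bP(\l)}$ is very ample and gives a projectively normal embedding $X_{\bP(\l)}\hookrightarrow \mathbb{P}^N$ whose homogeneous coordinate ring is $K[G(\Sss(\{\l\}))] = \cF(\Sss(\{\l\}))$. Consequently, the defining (homogeneous) ideal of $X_{\bP(\l)}$ in this embedding is, by definition, the toric ideal of $G(\Sss(\{\l\}))$.

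Next, for the first claim, I would simply apply \Cref{thm:quadraticFR} to $I=\Sss(\{\l\})$, which is a principal Borel sssi by construction. That theorem asserts that the toric ideal of $G(I)$ is generated by the symmetric exchange relations, which are quadratic binomials. This directly yields that the defining ideal of $X_{\bP(\l)}$ is generated by quadrics.

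Finally, for the Koszul statement, if $\l$ is of one of the four types listed in \Cref{cor:Koszul}, then that corollary already shows that $\cF(\Sss(\{\l\}))$ is a Koszul algebra. Since this is precisely the coordinate ring of $X_{\bP(\l)}$ identified above, we conclude that $X_{\bP(\l)}$ has a Koszul coordinate ring. There is no genuine obstacle here: the work is all contained in the prior results, and the only thing to verify carefully is the identification of the projective coordinate ring, which the paper has already set up in the paragraph just before the corollary.
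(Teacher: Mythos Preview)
Your proposal is correct and follows exactly the same approach as the paper: identify the coordinate ring of $X_{\bP(\l)}$ with $\cF(\Sss(\{\l\}))$ via the discussion preceding the corollary, then invoke \Cref{thm:quadraticFR} for quadratic generation and \Cref{cor:Koszul} for the Koszul property. The paper's own proof is a one-line version of precisely this deduction.
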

 \begin{proof}
 The claims follows from \Cref{thm:quadraticFR} and \Cref{cor:Koszul}, since the coordinate ring of $X_{\bP(\l)}$ is $\cF(\Sss\{\l\}$.
 \end{proof}
 
 The case $\l=(0,1,\ldots, n-1)$, which yields the standard permutohedron  and the (standard) {\em permutohedral variety} $X_{A_n}$, has been studied extensively from the point of view of its intersection theory \cite{Huh, HuhKatz} in connection with matroid theory. While generalized permutohedral varieties have been considered for various root systems, toric permutohedral varieties in the generality defined above as well as their coordinate rings seem to be currently unexplored.
 
One can extract several numerical invariants for permutohedral toric varieties and hence for toric rings of principal Borel sssi's from related invariants of the permutohedra.

\begin{rem} Consider a partition $\l\in P_n$.
\begin{enumerate}
\item The Hilbert function of $K[G(\Sss(\l))]=\cF(\Sss(\l))$ is the Erhart function of $\P(\l)$, namely $d \mapsto H(d):=$ the number of integer points in $d\cdot P(\l)$. If $\l=(0,1,\ldots, n-1)$, then $H(d)$ is the number of forests on $n$ vertices with $i$ edges \cite[Example 3.1]{Stanley}.
\item The degree of $X_{\bP(\l)}$ and the Hilbert-Samuel multiplicity of $K[G(\Sss(\l))]=\cF(\Sss(\l))$ are given by the normalized volume $\frac{{\rm Vol}(\bP(\l))}{(n-1)!}$. Formulae for the volume of a permutohedron can be found in \cite{Postnikov}. For instance, if $\l=(0^{n-d},1^d)$ then $\frac{{\rm Vol}(\bP(\l))}{(n-1)!}$ is the {\em Eulerian number}, that is the number of permutations of size $n-1$ with $d-1$ descents. For an arbitrary principal Borel sssi, the volume of $\bP(\l)$ is then calculated in terms of the {\em mixed Eulerian numbers}, i.e., normalized mixed volumes of the hypersimplices \cite[Proposition 9.8 and Definition 16.1]{Postnikov}.
\end{enumerate}
\end{rem}

\bigskip

\bibliography{biblio}
\bibliographystyle{alpha}

\end{document}